\newcommand{\N}{\mathbb{N}}
\newcommand{\R}{\mathbb{R}}
\newcommand{\C}{\mathbb{C}}
\newcommand{\B}{\mathbb{B}}
\newcommand{\I}{\,\mathrm{i}\,}
\renewcommand{\Re}{\operatorname{Re}}
\renewcommand{\Im}{\operatorname{Im}}
\renewcommand{\d}{\mathrm{d}}
\renewcommand{\O}{\mathcal O}
\newcommand{\Oo}{\mathcal{O}_{\mathrm o}}
\newcommand{\mc}{\mathcal}
\newcommand{\mb}{\mathbf}
\newcommand{\rg}{\operatorname{rg}}
\newtheorem{lemma}{Lemma}[section]
\newtheorem{proposition}[lemma]{Proposition}
\newtheorem{theorem}[lemma]{Theorem}
\newtheorem{corollary}[lemma]{Corollary}
\theoremstyle{remark}
\newtheorem{remark}[lemma]{Remark}
\theoremstyle{definition}
\newtheorem{definition}[lemma]{Definition}
\numberwithin{equation}{section}
\title[Strichartz estimates in similarity coordinates]{Strichartz estimates in similarity coordinates and stable blowup for
the critical wave equation}
\author{Roland Donninger}
\address{Rheinische Friedrich-Wilhelms-Universit\"at Bonn,
Mathematisches Institut, Endenicher Allee 60, D-53115 Bonn, Germany}
\address{Universit\"at Wien, Fakult\"at f\"ur Mathematik, Oskar-Morgenstern-Platz 1, A-1090 Vienna, Austria}
\email{donninge@math.uni-bonn.de}
\thanks{Roland Donninger is supported by the Alexander von 
Humboldt Foundation via a Sofja Kovalevskaja Award endowed by the German
Federal Ministry of Education and Research. Partial support by the Deutsche 
Forschungsgemeinschaft (DFG), CRC 1060 'The Mathematics
of Emergent Effects', is also gratefully acknowledged.}
\begin{document}
\maketitle

\begin{abstract}
We establish Strichartz estimates in similarity coordinates 
for the radial wave equation in three spatial dimensions with a (time-dependent)
self-similar potential.
As an application we consider the critical wave equation
and prove the asymptotic stability 
of the ODE blowup profile in the energy space.
\end{abstract}

\section{Introduction}
\noindent Strichartz estimates play a fundamental role in the study of nonlinear dispersive
wave equations.
Consider for instance the Cauchy problem for the energy-critical wave equation
\begin{equation}
\label{eq:main1}
\left \{
\begin{array}{l}
(\partial_t^2-\Delta_x)u(t,x)=u(t,x)^5 \\
u[0]=(f,g)
\end{array}
\right .
\end{equation} 
for $u: I\times \R^3\to \R$, $I\subset \R$ an interval, $0\in I$. 
We use the abbreviation 
\[ u[t]:=(u(t,\cdot),\partial_t u(t,\cdot)). \]
Eq.~\eqref{eq:main1} has the conserved energy
\begin{equation}
\label{eq:energy} E(u[t])=\tfrac12 \|u(t,\cdot)\|_{\dot H^1(\R^3)}^2
+\tfrac12 \|\partial_t u(t,\cdot)\|_{L^2(\R^3)}^2
-\tfrac16 \|u(t,\cdot)\|_{L^6(\R^3)}^6 
\end{equation}
and the scaling symmetry 
\begin{equation}
\label{eq:scaling}
u(t,x)\mapsto u_\lambda(t,x):=\lambda^{-\frac12}u(\tfrac{t}{\lambda},\tfrac{x}{\lambda}),\qquad 
\lambda>0.
\end{equation}
In order to have access to conservation laws like \eqref{eq:energy}, it is desirable to study Eq.~\eqref{eq:main1}
at the lowest regularity level possible. 
The energy \eqref{eq:energy} is invariant under the scaling \eqref{eq:scaling} and thus,
the \emph{energy space} 
$\dot H^1\times L^2(\R^3)$ is the natural Sobolev space for the Cauchy problem 
\eqref{eq:main1}.
In fact, it is not hard to see that $\dot H^1\times L^2(\R^3)$ is \emph{optimal} 
for local well-posedness in the
scale of homogeneous Sobolev spaces $\dot H^s\times \dot H^{s-1}(\R^3)$, see 
e.g.~\cite{LinSog95, Sog08}.
The appropriate weak formulation of Eq.~\eqref{eq:main1} in this context is provided by
\emph{Duhamel's formula}
\begin{equation}
\label{eq:Duhamel}
 u(t,\cdot)=\cos(t|\nabla|)f+\frac{\sin(t|\nabla|)}{|\nabla|}g
+\int_0^t \frac{\sin((t-s)|\nabla|)}{|\nabla|}u(s,\cdot)^5 \d s,
\end{equation}
which makes sense for energy initial data $(f,g)\in \dot H^1 \times L^2(\R^3)$.
The fundamental problem that occurs in constructing solutions to Eq.~\eqref{eq:Duhamel}
is the fact that the nonlinearity cannot be controlled by Sobolev embedding
since this would require $\dot H^1(\R^3)\hookrightarrow L^{10}(\R^3)$ which clearly fails.
The way out of this dilemma is provided by the Strichartz estimate
\[ \left \|\frac{\sin(t|\nabla|)}{|\nabla|}g\right \|_{L_t^5(\R)L^{10}(\R^3)}
\lesssim \|g\|_{L^2(\R^3)} \]
and variants thereof. Strichartz estimates encode the dispersive properties of
solutions to the free wave equation and it is precisely the 
dispersion which yields the decisive gain compared to
the ordinary Sobolev embedding.

Similar problems occur if one is to study the wave flow near a static solution 
of Eq.~\eqref{eq:main1} other than $0$.
For instance, Eq.~\eqref{eq:main1} has the solution 
\[ u(t,x)=W(x):=(1+\tfrac13|x|^2)^{-\frac12} \]
and studying the wave flow near $W$ at energy regularity requires appropriate
Strichartz estimates for the wave equation
\[ [\partial_t^2-\Delta_x+V(x)]u(t,x)=0 \]
with the potential $V(x)=-5W(x)^4$.
Strichartz estimates for wave equations with potentials are 
an active area of research, see e.g.~\cite{MarMetTatToh10, BurPlaStaTah04,
DAnPie05, DAnFan08, Bec14} for some recent results.

Another explicit solution of Eq.~\eqref{eq:main1} is the \emph{ODE blowup solution}
\[ u^T(t,x)=c_3(T-t)^{-\frac12},\qquad c_3:=(\tfrac34)^\frac14 \]
where $T>0$ is a free parameter.
The solution $u^T$ provides an explicit example of finite-time blowup.
In order to determine the role of $u^T$ for generic evolutions, it is necessary to investigate
its stability.
The study of the wave flow near $u^T$ at optimal regularity 
requires Strichartz estimates for wave equations with \emph{self-similar potentials}
of the form
\[ [\partial_t^2-\Delta_x+(T-t)^{-2}V(\tfrac{x}{T-t})]u(t,x)=0. \]
In this paper we prove such estimates for the first time.
By applying the machinery from \cite{Don11, DonSchAic11, DonSch12, DonSch14, Don14} we can 
then conclude the asymptotic stability of $u^T$ in the energy topology.
This is the first result on blowup stability for a wave equation at the optimal
regularity level. 
Our main result reads as follows (see Section \ref{sec:lightcone} below for the precise
solution concept we are using).

\begin{theorem}
\label{thm:main}
Let $(f,g)$ be radial.
There exist constants $M,\delta>0$ such that, if
\[ \|(f,g)-u^1[0]\|_{H^1 \times L^2(\B^3_{1+\delta})}\leq \tfrac{\delta}{M}, \]
then the blowup time $T=T_{f,g}(0)$ at the origin is in $[1-\delta,1+\delta]$ and the
corresponding solution $u$ of Eq.~\eqref{eq:main1} satisfies
\[ \int_0^T \left [\frac{\|u(t,\cdot)-u^T(t,\cdot)\|_{L^\infty(\B^3_{T-t})}}
{\|u^T(t,\cdot)\|_{L^\infty(\B^3_{T-t})}}\right ]^2 \frac{\d t}{T-t}\lesssim \delta^2. \]
\end{theorem}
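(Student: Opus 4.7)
First I would pass to similarity coordinates
\[ \tau=-\log(T-t),\qquad \xi=\frac{x}{T-t}, \]
which send the backward lightcone of the putative blowup point $(T,0)$ to the infinite cylinder $[0,\infty)\times \overline{\B}^3$ and turn the ODE blowup profile into the constant $\psi_0\equiv c_3$. Writing the rescaled solution as $\psi_0+\varphi$ and expanding $(\psi_0+\varphi)^5$ recasts \eqref{eq:main1} as a first-order perturbation system
\[ \partial_\tau\Phi=\mb L\Phi+\mb N(\Phi),\qquad \Phi(0)=\mb U(f,g;T), \]
where $\mb L=\mb L_0+\mb L'$ is the generator of the linearised flow in similarity coordinates ($\mb L'$ being the bounded multiplication operator coming from $5\psi_0^4$), the remainder $\mb N(\Phi)$ collects all quadratic-and-higher terms in $\varphi$, and the initial data map $\mb U(f,g;T)$ depends smoothly on the free parameter $T$. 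From here the plan follows the Lyapunov--Perron template of the references cited above the theorem, with the decisive new input being the Strichartz estimates proved in the preceding sections: these are precisely what permits closing the argument at energy regularity rather than strictly above scaling.

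Next I would import the spectral picture of $\mb L$ from \cite{Don14, DonSch14}: a single unstable eigenvalue $\lambda=1$ with one-dimensional eigenspace spanned by the symmetry mode generated by $T$-translation, together with a uniform bound on the $C_0$-semigroup $e^{\tau\mb L}$ after subtracting the associated Riesz projector $\mc P$. Combining this with the Strichartz estimates of the preceding sections for the free similarity semigroup $e^{\tau\mb L_0}$, upgraded to $e^{\tau\mb L}$ by a standard perturbation argument that treats $\mb L'$ as a bounded potential, I obtain both a uniform $L^\infty_\tau(H^1\times L^2)$ bound and an $L^p_\tau L^q(\B^3)$ Strichartz bound for the semigroup acting on $\rg(I-\mc P)$.

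The nonlinear closure then proceeds by a contraction mapping in $X:=L^\infty_\tau H^1\cap L^p_\tau L^q$ for the modified Duhamel equation
\[ \Phi(\tau)=e^{\tau\mb L}\Phi(0)+\int_0^\tau e^{(\tau-s)\mb L}\mb N(\Phi(s))\,\d s \]
in which the unstable component is subtracted off, and that subtracted component is then removed in an outer fixed point by tuning $T$ as a function of $(f,g)$; this tuning also supplies the continuous dependence of the blowup time $T_{f,g}(0)$ on the data that locates it in $[1-\delta,1+\delta]$. The crucial nonlinear estimate here is $\|\mb N(\Phi)\|_{L^1_\tau L^2}\lesssim \|\Phi\|_X^2+\|\Phi\|_X^5$, provided precisely by the Strichartz part of $X$ and unavailable from Sobolev embedding alone, since $\dot H^1\not\hookrightarrow L^{10}$ in three dimensions. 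Undoing the similarity change of variables translates the resulting bound $\|\Phi\|_X\lesssim \delta$ into exactly the integral decay claimed in the theorem. The single genuine obstacle is therefore the existence of those Strichartz estimates, which is the technical core of the paper; once they are in hand, the remainder of the argument is essentially a line-by-line upgrade of the above-scaling scheme of \cite{Don11, DonSchAic11, DonSch12, DonSch14, Don14}.
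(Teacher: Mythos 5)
Your outer framework coincides with the paper's: similarity coordinates, the ansatz around $(c_3,\tfrac12 c_3)$, the spectral picture of $\mb L$ with the single symmetry eigenvalue $\lambda=1$ and Riesz projection, a contraction for the modified Duhamel equation with the unstable component subtracted, an outer fixed point in $T$, and the translation of $\|\Phi\|_{\mc X}\lesssim\delta$ back into the stated integral bound. The genuine gap is at the step you dispose of in one clause: obtaining the global-in-time Strichartz estimates and the uniform energy bound for the \emph{perturbed} semigroup $\mb S(\tau)(\mb I-\mb P)$ ``by a standard perturbation argument that treats $\mb L'$ as a bounded potential.'' That argument does not close. Writing Duhamel at the semigroup level, $\mb S(\tau)=\mb S_0(\tau)+\int_0^\tau \mb S_0(\tau-\sigma)\mb L'\mb S(\sigma)\,\d\sigma$, and invoking the inhomogeneous Strichartz estimate for $\mb S_0$ forces you to control $\int_0^\infty\|\mb L'\mb S(\sigma)(\mb I-\mb P)\mb f\|_{\mc H}\,\d\sigma$. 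On the stable subspace there is no time decay of the $\mc H$-norm to make this integral finite: abstract semigroup theory only gives $C_\epsilon e^{\epsilon\tau}$ (the spectrum reaches up to the imaginary axis, so there is no gap), the potential $\mb L'$ is neither small nor decaying in $\tau$, and the available spacetime norms ($L^\infty_\tau\mc H$, $L^2_\tau L^\infty_x$ of the first component) do not control an $L^1_\tau$ quantity on $\R_+$. This is precisely why the paper does not perturb at the semigroup level but instead inverts the Laplace transform, constructs the resolvent's Green function by Volterra iteration with symbol-type bounds in $\Im\lambda$ (Lemmas \ref{lem:v1}--\ref{lem:W}), splits $G=G_0+\sum_n G_n$ (Lemma \ref{lem:decomp}), pushes the contour to $\Re\lambda=0$, and proves oscillatory-integral kernel bounds for the resulting operators $T_n(\tau)$ and, separately, $\dot T_n(\tau)$ to absorb the $\lambda\tilde f_1$ term in $F_\lambda$ (Proposition \ref{prop:Gn}, Lemma \ref{lem:Tn}, Proposition \ref{prop:dotTn}).

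The same issue affects the uniform bound $\|\mb S(\tau)(\mb I-\mb P)\mb f\|_{\mc H}\lesssim\|\mb f\|_{\mc H}$ that you propose to ``import'' from the earlier works: those references operate in $H^2\times H^1$, where decay comes from scaling, and in $\mc H=H^1\times L^2$ no such bound is available off the shelf; with only $C_\epsilon e^{\epsilon\tau}$ the $L^\infty_\tau\mc H$ component of your space $\mc X$ does not close in the global-in-time contraction (the Duhamel term picks up the growing factor $e^{\epsilon\tau}$ against an $L^1_\tau$ source). The paper has to prove this improved energy bound separately (Section 5, Lemma \ref{lem:energy}), again through the Green-function representation, now applied to $\partial_\rho$ of the kernel ($G'$, the operators $\tilde S_n$, $S_n'$, $\dot S_n'$). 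So while your nonlinear scheme and the use of the $L^2_\tau L^\infty_x$ endpoint for the quadratic term are exactly right, the linear estimates you treat as routine are the actual content of the theorem and cannot be obtained by bounded-perturbation reasoning from $\mb S_0$.
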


\subsection{Rough solutions in lightcones}
\label{sec:lightcone}

Throughout the paper we restrict ourselves to radial data. Consequently, the
effective Cauchy problem we study reads
\begin{equation}
\label{eq:main}
 \left \{
\begin{array}{l}
(\partial_t^2-\partial_r^2-\tfrac{2}{r}\partial_r)u(t,r)=u(t,r)^5 \\
u[0]=(f,g)
\end{array}
\right. 
\end{equation}
where $r=|x|$.
For a detailed study of blowup it is necessary to localize the well-posedness theory
to lightcones. 
This is straightforward for classical solutions but not entirely trivial for energy-class
data.
Note that, by Sobolev embedding and H\"older's inequality, $f\in \dot H^1(\R^3)$
implies $f\in H^1_{\mathrm{loc}}(\R^3)$.
Consequently, for the Cauchy problem restricted to a lightcone, we may assume
that the data belong to $H^1\times L^2$.  
As discussed in \cite{KilStoVis14}, one may still use the Duhamel formula \eqref{eq:Duhamel}
combined with a cut-off technique and finite speed of propagation to define
energy-class solutions in lightcones.
A different approach is based on the introduction of similarity coordinates
\begin{equation}
\label{eq:simcoord}
 \tau:=-\log(T-t)+\log T,\qquad \rho:=\frac{r}{T-t},\qquad T>0 
 \end{equation}
and semigroup theory.
The coordinate transformation \eqref{eq:simcoord} maps the lightcone
\[ \Gamma_T:=\{(t,r): t \in [0,T), r\leq T-t\} \]
to the infinite cylinder $[0,\infty)\times \overline{\B^3}$.
If $u\in C^\infty(\Gamma_T)$ is a classical solution of Eq.~\eqref{eq:main}, we define
$\psi_T \in C^\infty([0,\infty)\times \overline{\B^3})$ by 
\begin{equation}
\label{eq:utopsiintro}
 u(t,r)=(T-t)^{-\frac{1}{2}}\psi_T(-\log(T-t)+\log T,\tfrac{r}{T-t}).
 \end{equation}
 Then Eq.~\eqref{eq:main} is equivalent to
\[ \left \{ \begin{array}{l}
\partial_\tau \Psi_T(\tau)=\tilde{\mb L}_0 \Psi_T(\tau)+\mb F(\Psi_T(\tau)) \\
\Psi_T(0)(\rho)=\left ( \begin{array}{c}
T^\frac{1}{2}f(T\rho) \\
 T^\frac{3}{2}g(T\rho) \end{array} \right )
 \end{array} 
\right .  
 \]
for 
\begin{equation}
\label{eq:Psiintro} \Psi_T(\tau)(\rho):=\left (\begin{array}{c}
\psi_T(\tau,\rho) \\
(\partial_\tau+\rho\partial_\rho+\frac{1}{2})
\psi_T(\tau,\rho) \end{array} \right ), 
\end{equation}
where $\tilde{\mb L}_0$ is a spatial differential operator and $\mb F$ denotes the nonlinearity.
From \cite{DonSch12}, see also Proposition \ref{prop:gen} below, it follows that 
(a closed extension of)
$\tilde{\mb L}_0$, augmented with a suitable domain, generates a semigroup $\mb S_0(\tau)$
on $H^1\times L^2(\B^3)$.
This leads to the following natural definition of energy-class solutions in lightcones.

\begin{definition}
We say that $u$ is an \emph{energy-class solution of Eq.~\eqref{eq:main} in the lightcone
$\Gamma_T$} if the corresponding $\Psi_T$, given by Eqs.~\eqref{eq:utopsiintro} 
and \eqref{eq:Psiintro},
belongs to $C([0,\infty),H^1\times L^2(\B^3))$ and satisfies
\[ \Psi_T(\tau)=\mb S_0(\tau)\Psi_T(0)+\int_0^\tau \mb S_0(\tau-\sigma)\mb F(\Psi_T(\sigma))\d\sigma \]
for all $\tau\geq 0$.
\end{definition}

The concept of energy-class solution in lightcones leads to the definition of the local blowup
time.

\begin{definition}
For given (radial) data $(f,g)\in \dot H^1\times L^2(\R^3)$ we say that 
$T\in A_{f,g}\subset \R_+$
if there exists an energy-class solution to Eq.~\eqref{eq:main} in the lightcone $\Gamma_T$.
We set 
\[ T_{f,g}(0):=\sup A_{f,g}\cup \{0\}. \] If $T_{f,g}(0)<\infty$, $T_{f,g}(0)$ is called
the \emph{blowup time (at the origin)}.
\end{definition}

\subsection{Related work}
Needless to say, the critical wave equation \eqref{eq:main1} attracted a lot of interest
in the recent past. One particularly intriguing feature is the existence of two competing
blowup mechanisms. In addition to the ODE blowup (type I blowup), there exists 
so-called type II blowup which is closely related to solitons 
and characterized by the boundedness of the energy norm.
In particular in the study of type II blowup,
there was spectacular progress in the last few years.
In their seminal work, Kenig and Merle established a blowup/scattering dichotomy 
\cite{KenMer08}, see also \cite{DuyMer08}.
The first construction of type II blowup solutions is due to Krieger, Schlag, and Tataru
\cite{KriSchTat09}, see \cite{DonHuaKriSch14, KriSch14} for further work in this direction.
An alternative approach was developed by Hillairet and Rapha\"el \cite{HilRap12}, cf.~also 
the recent work by Jendrej \cite{Jen15a, Jen15b}.
The author and Krieger constructed nondispersive solutions and solutions that blow up
in infinite time \cite{DonKri13}.
In a series of papers, Duyckaerts, Kenig, and Merle
obtained a complete classification of type II behavior
\cite{DuyKenMer11, DuyKenMer12a, DuyKenMer12b, DuyKenMer15a, DuyKenMer15b, DuyKenMer14b,
DuyKenMer15c}.
Krieger, Nakanishi, and Schlag established a number of fundamental results from the 
dynamical systems
point of view \cite{KriSch07, KriNakSch13a, KriNakSch13b, KriNakSch14}.

Concerning type I blowup, less is known. 
Bizo\'n, Chmaj, and Tabor \cite{BizChmTab04} gave strong numerical evidence that
blowup is generically of type I and described by the ODE profile.
The author and Sch\"orkhuber \cite{DonSch14, DonSch15a, DonSch15b} 
proved the asymptotic stability of the ODE blowup profile, but in the stronger
topology $H^2\times H^1$. We also mention the recent paper by Krieger and Wong \cite{KriWon14}
on continuation beyond type II singularities.
Unfortunately, the impressive machinery \cite{MerZaa03, MerZaa05, MerZaa07, 
MerZaa08, MerZaa12a, MerZaa12b} developed by Merle and Zaag for studying
type I blowup is confined to energy-subcritical equations and does not apply here.

\subsection{Outline of the proof}
The starting point for our earlier work \cite{DonSch14, DonSch15a, DonSch15b} was the following
simple observation. If $u: \R\times \R^3\to \R$ 
solves the \emph{free} wave equation 
\[ (\partial_t^2-\Delta_x)u(t,x)=0, \] 
then, by energy conservation,
the function $\psi_T$, defined by Eq.~\eqref{eq:utopsiintro},
satisfies 
\begin{align*} 1&\gtrsim \|u(t,\cdot)\|_{\dot H^2(\B^3_{T-t})}=
(T-t)^{-\frac{1}{2}}\|\psi_T(-\log(T-t)+\log T,\tfrac{\cdot}{T-t})\|_{\dot H^2(\B^3_{T-t})}\\
&=(T-t)^{-1}\|\psi_T(-\log(T-t)+\log T, \cdot)\|_{\dot H^2(\B^3)}.
 \end{align*}
 In other words, $\|\psi_T(\tau,\cdot)\|_{\dot H^2(\B^3)}\lesssim Te^{-\tau}$.
Consequently, for pure scaling reasons, one gets exponential decay in $H^2\times H^1$
for the free evolution
in similarity coordinates.
In the aforementioned references \cite{DonSch14, DonSch15a, DonSch15b} we were able
to propagate this decay to the nonlinear problem via a perturbative argument.
As the decay comes exclusively from scaling, it was not necessary to exploit
any \emph{dispersive} properties of the wave operator.
However, in order to see the scaling decay, one has to require the data to be in $H^2\times H^1$
which is far from optimal\footnote{We remark that \cite{DonSch14, DonSch15a, DonSch15b}
are mainly concerned with the energy-\emph{super}critical regime. Consequently,
this regularity issue is of minor importance there.} in view of the well-posedness theory for Eq.~\eqref{eq:main}.
Unfortunately, 
if one lowers the degree of regularity all the way down to the critical 
$\dot H^1\times L^2$,
one loses the decay from scaling and this makes the problem
much harder.

The absence of scaling decay necessitates the development of a completely different approach
which has to crucially exploit the dispersive behavior of the wave operator in similarity
coordinates.
On the technical level we accomplish this by proving Strichartz estimates for
the semigroup in question.
In what follows we give a more detailed outline of the paper.

\begin{itemize}
\item As explained above, the introduction of similarity coordinates \eqref{eq:simcoord}
leads to an evolution problem of the form
\begin{equation}
\label{eq:NLintro}
\partial_\tau \Psi_T(\tau)=\tilde{\mb L}_0 \Psi_T(\tau)+\mb F(\Psi_T(\tau)). 
\end{equation}
For brevity we drop the subscript $T$ and write $\Psi=\Psi_T$.
In this formulation, the 
ODE blowup solution $u^T$ corresponds to the constant function $(c_3, \frac{1}{2}c_3)$,
see Eqs.~\eqref{eq:utopsiintro} and \eqref{eq:Psiintro}.
Thus, we insert the ansatz $\Psi=(c_3,\frac{1}{2}c_3)+\Phi$ into Eq.~\eqref{eq:NLintro}
which yields
\[ \partial_\tau \Phi(\tau)=\tilde{\mb L}_0 \Phi(\tau)+\mb L'\Phi(\tau)+\mb N(\Phi(\tau)) \]
where the ``potential term'' $\mb L'\Phi(\tau)$ comes from 
the linearization of $\mb F$ at $(c_3,\frac{1}{2}c_3)$
and $\mb N(\Phi(\tau))$ is the nonlinear remainder.

\item Following \cite{DonSch12}, we prove that a closed extension of $\tilde{\mb L}_0+\mb L'$,
denoted by $\mb L$,
generates a semigroup $\mb S(\tau)$ on $\mc H:=H^1\times L^2(\B^3)$.
The generator $\mb L$ has precisely one unstable eigenvalue $\lambda=1$ and
$\sigma(\mb L)\backslash \{1\}\subset \{z\in \C: \Re z\leq 0\}$.
However, the eigenvalue $1$ does not indicate a ``real'' instability of the blowup solution
$u^T$ but is related to the time translation symmetry of the equation.
Furthermore, the Riesz projection $\mb P$ associated to the eigenvalue $1$ has rank one and
from semigroup theory we infer the bound $\|\mb S(\tau)(\mb I-\mb P)\|_{\mc H}
\leq C_\epsilon e^{\epsilon\tau}$ for any $\epsilon>0$.

\item By Laplace inversion, we obtain an explicit representation of 
$\mb S(\tau)(\mb I-\mb P)$ in terms of the resolvent of $\mb L$. 
Indeed, setting $\tilde{\mb f}=(\tilde f_1,\tilde f_2):=(\mb I-\mb P)\mb f$
for sufficiently regular $\mb f$, 
the first component of $\mb S(\tau)(\mb I-\mb P)\mb f$ is given by
\begin{equation}
\label{eq:Laplaceintro} [\mb S(\tau)\tilde{\mb f}]_1(\rho)=\frac{1}{2\pi \I}
\lim_{N\to\infty}\int_{\epsilon-\I N}^{\epsilon+\I N}e^{\lambda\tau}
\int_0^1 G(\rho,s;\lambda)F_\lambda(s)\d s \d\lambda 
\end{equation}
where $G$ is the Green function of the spectral ODE associated to $\mb L$
and 
\[ F_\lambda(s):=s\tilde f_1'(s)+(\lambda+\tfrac32)\tilde f_1(s)+\tilde f_2(s). \]
Eq.~\eqref{eq:Laplaceintro} holds for any $\epsilon>0$ and the goal is to 
take the limit $\epsilon\to 0$, i.e., to 
push the contour of integration to the imaginary axis.
This requires precise pointwise bounds on $G$ and one has to exploit oscillations.

\item We construct the Green function $G$ by a perturbative ODE analysis which yields the
representation $G(\rho,s;\lambda)=G_0(\rho,s;\lambda)+\tilde G(\rho,s;\lambda)$ 
where $G_0$ is the (explicitly known) Green
function of the free equation and the perturbing kernel $\tilde G$ has nice decay
properties as $|\Im\lambda|\to\infty$.
Consequently, Eq.~\eqref{eq:Laplaceintro} splits into a free part $[\mb S_0(\tau)\tilde{\mb f}]_1$
and a perturbation $T(\tau)\mb{\tilde f}$, where $\mb S_0(\tau)$ is the semigroup
generated by (a closed extension of) $\tilde{\mb L}_0$.

\item Next, we prove the Strichartz estimates
\[ \|[\mb S_0(\cdot)\tilde{\mb f}]_1\|_{L^p(\R_+)L^q(\B^3)}\lesssim \|\tilde{\mb f}\|_{\mc H},\qquad
\tfrac{1}{p}+\tfrac{3}{q}=\tfrac12 \]
in the range $p \in [2,\infty]$, $q\in [6,\infty]$.
This is done by employing the physical space representation of $\mb S_0(\tau)$ based on
d'Alembert's formula and an argument by Klainerman and Machedon \cite{KlaMac93}.
We remark that our Strichartz estimates include the endpoint $L^2(\R_+)L^\infty(\B^3)$ which
is crucial for the construction.
By delicate oscillatory integral estimates we prove the same Strichartz estimates for
the perturbation $T(\tau)\tilde {\mb f}$ which finally yields
\[ \|[\mb S(\cdot)(\mb I-\mb P)\mb f]_1\|_{L^p(\R_+)L^q(\B^3)}\lesssim \|\mb f\|_{\mc H} \]
for the above range of exponents $p,q$.
In a similar vein we improve the growth bound 
$\|\mb S(\tau)(\mb I-\mb P)\|_{\mc H}\leq C_\epsilon e^{\epsilon\tau}$ from semigroup theory
to $\|\mb S(\tau)(\mb I-\mb P)\|_{\mc H}\lesssim 1$.

\item The Strichartz estimates allow us to control the nonlinear terms in a way
similar to the standard local well-posedness theory
and we are able to run the program from \cite{DonSch12} to complete the proof
of Theorem \ref{thm:main}.

\end{itemize}

\subsection{Additional remarks}
In addition to the main result Theorem \ref{thm:main}, we hope that the Strichartz estimates 
in similarity
coordinates are of independent interest.
In this respect it is worth noting that our perturbative construction of the Green function is very robust and
does not use any specific properties of the potential. 
Thus, if one is able to derive the necessary spectral information, the proof of Strichartz
estimates along the lines of this paper works for essentially arbitrary potentials.
In view of the fact that the long-standing spectral issues related to blowup in 
supercritical wave maps and Yang-Mills models have recently been solved \cite{CosDonXia14, CosDonGloHua15, CosDonGlo16},
it is very likely that the techniques introduced in the present paper can also be applied
to these problems.

In this work we restrict ourselves to the radial case. There are two major problems one needs to address in order to remove the symmetry assumption. First, we use the $L^2L^\infty$ Strichartz endpoint to deal with the quadratic term in the nonlinearity. As is well known, this endpoint estimate fails outside of spherical symmetry \cite{KeeTao98}. This issue might be circumvented by using so-called reverse Strichartz estimates instead, cf.~\cite{Bec14}.
Second, we heavily rely on asymptotic ODE analysis to obtain a representation for the solution. In the nonsymmetric case, the corresponding elliptic equation is a PDE. 
However, since the solution one perturbs around is radial, one may use a decomposition in spherical harmonics to transform this PDE to a system of decoupled ODEs where a similar analysis as in this paper would apply. This approach was used in \cite{DonSch15a} to remove the symmetry assumption. Alternatively, one may try to adapt the robust methods developed by Tataru and collaborators \cite{MarMetTat08, Tat08, MetTat12} for dealing with variable coefficient equations.

\subsection{Notation}
Most of the notation we use is standard in the field or self-explanatory. 
We denote by $\B^d_R$ the open ball of radius $R>0$ in $\R^d$, centered at the origin.
For brevity we write $\B^d:=\B^d_1$ and $\R_+:=(0,\infty)$.
Bold letters denote $2$-component functions, e.g.~$\mb u=(u_1,u_2)$.
Throughout, we work with radial functions, i.e., $f(x)=\tilde f(|x|)$, 
and we identify $f$ with $\tilde f$. For Strichartz norms we use the notation
\[ \|u\|_{L^p(I)L^q(U)}=\|u(t,\cdot)\|_{L^p_t(I)L^q(U)}=\left (
\int_I \|u(t,\cdot)\|_{L^q(U)}^p \d t \right )^{1/p}. \]
For the Wronskian we use the convention $W(f,g):=fg'-f'g$.
Furthermore, we write $f(x)=O(g(x))$ if $f$ satisfies
$|f(x)|\lesssim |g(x)|$.
The notation $f(x)\sim g(x)$ as $x\to a$ means $\lim_{x\to a}\frac{f(x)}{g(x)}=1$.
The letter $C$ (possibly with subscripts to denote dependencies) 
stands for a positive constant that might change its value at each occurrence.
We also employ the ``Japanese bracket'' notation $\langle x\rangle:=\sqrt{1+|x|^2}$.

\section{Well-posedness of the Cauchy problem}

\subsection{Similarity coordinates}

Transforming Eq.~\eqref{eq:main} to similarity coordinates
\[ \tau=-\log(T-t)+\log T,\qquad \rho=\frac{r}{T-t} \] 
yields 
\begin{align*} 
\big [\partial_\tau^2&+2\partial_\tau+2\rho\partial_\tau\partial_\rho-(1-\rho^2)
\partial_\rho^2-\tfrac{2}{\rho}\partial_\rho 
+3\rho\partial_\rho+\tfrac{3}{2}(\tfrac{3}{2}-1)\big ]\psi(\tau,\rho)
=\psi(\tau,\rho)^5
\end{align*}
where 
\begin{equation}
\label{eq:utopsi}
 u(t,r)=(T-t)^{-\frac{1}{2}}\psi\big (-\log(T-t)+\log T,\tfrac{r}{T-t}\big ).
 \end{equation}
In these coordinates, the blowup solution $u^T$ reads
\[ \psi^T(\tau,\rho):=T^\frac{1}{2}e^{-\frac{1}{2}\tau}u^T(T-Te^{-\tau},Te^{-\tau}\rho)
=c_3.  \]
In order to obtain a first-order system in $\tau$, we introduce the variables
\begin{align}
\label{eq:psitopsi12}
\psi_1(\tau,\rho)&:=\psi(\tau,\rho) \nonumber \\
\psi_2(\tau,\rho)&:=\big [\partial_\tau+\rho\partial_\rho+\tfrac{1}{2}\big ]\psi(\tau,\rho).
\end{align}
This yields the system
\begin{align*}
\partial_\tau \psi_1&=-\rho\partial_\rho \psi_1-\tfrac{1}{2}\psi_1+\psi_2 \\
\partial_\tau \psi_2&=\partial_\rho^2 \psi_1+\tfrac{2}{\rho}\partial_\rho \psi_1
-\rho\partial_\rho \psi_2-\tfrac{3}{2}\psi_2+\psi_1^5 .
\end{align*}
The ODE blowup solution $u^T$ corresponds to the static solution
\[ \psi_1(\tau,\rho)=c_3,\qquad \psi_2(\tau,\rho)=\tfrac{1}{2}c_3. \]
We make the ansatz
\begin{equation}
\label{eq:ansatzphi}
 (\psi_1,\psi_2)=(c_3,\tfrac{1}{2}c_3)+(\phi_1,\phi_2). 
 \end{equation}
This yields the evolution equation
\begin{align}
\label{eq:phisys}
\partial_\tau \phi_1&=-\rho\partial_\rho \phi_1-\tfrac{1}{2}\phi_1+\phi_2 \nonumber \\
\partial_\tau \phi_2&=\partial_\rho^2 \phi_1+\tfrac{2}{\rho}\partial_\rho \phi_1
-\rho\partial_\rho \phi_2-\tfrac{3}{2}\phi_2+\tfrac{15}{4}\phi_1+N(\phi_1)
\end{align} 
where
\[ N(\phi_1):=10c_3^3 \phi_1^2+10 c_3^2\phi_1^3+5c_3 \phi_1^4+\phi_1^5. \]

\subsection{Semigroup theory}
Next, we develop the well-posedness theory for the linear Cauchy problem
associated to Eq.~\eqref{eq:phisys}.
In order to write Eq.~\eqref{eq:phisys} in more abstract form, we define the (formal) differential operators
\[ \tilde {\mb L}_0 \mb u(\rho):=\left ( \begin{array}{c}
-\rho u_1'(\rho)-\frac{1}{2} u_1(\rho)+u_2(\rho) \\
u_1''(\rho)+\frac{2}{\rho}u_1'(\rho)-\rho u_2'(\rho)-\frac{3}{2}u_2(\rho) 
\end{array} \right ) \]
and 
\[ \mb L' \mb u(\rho):=\left (\begin{array}{c}
0 \\ \frac{15}{4}u_1(\rho) \end{array} \right ), \]
acting on 2-component functions $\mb u=(u_1,u_2)$.
Furthermore, we set
\[ \mb N(\mb u):=\left (\begin{array}{c} 0 \\ N(u_1) \end{array} \right ). \]
Then we can write the system \eqref{eq:phisys} succinctly as
\begin{equation}
\label{eq:sysabs} \partial_\tau \Phi(\tau)=(\tilde{\mb L}_0+\mb L')\Phi(\tau) 
+\mb N(\Phi(\tau))
\end{equation}
for $\Phi(\tau)(\rho):=(\phi_1(\tau,\rho),\phi_2(\tau,\rho))$.

To study the evolution, we employ the semigroup machinery.
To this end, it is necessary to promote the formal differential operators $\tilde{\mb L}_0$
and $\mb L'$ to linear operators acting on a suitable Banach space.
We set 
\[ \mc H:=\{\mb f\in H^1\times L^2(\B^3): \mb f\mbox{ is radial}\} \] and write
\[ \|\mb f\|_{\mc H}^2:=\|f_1\|_{H^1(\B^3)}^2+\|f_2\|_{L^2(\B^3)}^2 \]
for $\mb f=(f_1,f_2)$.
Next, we augment the operator $\tilde{\mb L}_0$ with
the domain 
\[ \mc D(\tilde{\mb L}_0):=\{\mb u\in C^2\times C^1([0,1]): u_1'(0)=0\}. \]
In this way, $\tilde{\mb L}_0$ becomes a densely defined unbounded linear operator
on $\mc H$.
We claim that $\tilde{\mb L}_0$ has a closed extension, denoted by $\mb L_0$, which generates a semigroup.

\begin{proposition}
\label{prop:gen}
The operator $\tilde{\mb L}_0: \mc D(\tilde{\mb L}_0)\subset \mc H\to\mc H$ has a closed extension $\mb L_0$ that generates a strongly-continuous one-parameter semigroup
$\{\mb S_0(\tau): \tau\geq 0\}$ on $\mc H$ satisfying
\[ \|\mb S_0(\tau)\mb f\|_{\mc H}\lesssim \|\mb f\|_{\mc H} \]
for all $\tau\geq 0$ and all $\mb f\in \mc H$.
\end{proposition}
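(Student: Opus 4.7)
My plan is to construct $\mb L_0$ as the generator of a $C_0$-semigroup via the Lumer--Phillips theorem, following the strategy of \cite{DonSch12}. I would first equip $\mc H$ with an inner product equivalent to the standard one, of the form $(\mb u,\mb v)_{\mc H}:=\int_{\B^3}\nabla u_1\cdot \nabla \bar v_1+\int_{\B^3}u_2\bar v_2+\text{(lower order)}$, fine-tuned so that $\tilde{\mb L}_0-\omega\mb I$ is dissipative for some $\omega\geq 0$. The main computation is an integration by parts on $\B^3$: the transport pieces $-\rho\partial_\rho u_1$ and $-\rho\partial_\rho u_2$ produce boundary contributions at $\rho=1$ that carry the correct sign---this reflects the characteristic nature of the light-cone boundary in similarity coordinates, so information flows outward---while the cross terms involving $u_1''+\tfrac{2}{\rho}u_1'$ paired against $u_2$ integrate by parts against the $\dot H^1$ coupling to produce a cancellation. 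The residual interior and lower-order terms are bounded bilinear forms controlled by $\omega\|\mb u\|_{\mc H}^2$.

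Second, I would verify the range condition: for some $\lambda>\omega$, $\rg(\lambda-\tilde{\mb L}_0)$ is dense in $\mc H$. Given sufficiently regular $\mb f\in \mc H$, the equation $(\lambda-\tilde{\mb L}_0)\mb u=\mb f$ lets me eliminate $u_2$ and reduces to a linear second-order ODE for $u_1$ on $(0,1)$. This ODE has a regular singular point at $\rho=0$ (the domain condition $u_1'(0)=0$ picks out the one-dimensional space of admissible Frobenius solutions) and a regular singular point at the characteristic boundary $\rho=1$. Standard Frobenius/variation-of-parameters analysis produces $u_1\in C^2([0,1])$ with $u_1'(0)=0$, after which $u_2$ is recovered from the first component. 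Together with the dissipativity computation, Lumer--Phillips produces the closed extension $\mb L_0$ and the $C_0$-semigroup $\mb S_0(\tau)$ satisfying the generic bound $\|\mb S_0(\tau)\|_{\mc H}\leq e^{\omega\tau}$.

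The main obstacle is upgrading this to the claimed \emph{uniform} bound $\|\mb S_0(\tau)\mb f\|_{\mc H}\lesssim \|\mb f\|_{\mc H}$. My plan here is to exploit the fact that $\mb S_0(\tau)$ is the free wave flow in similarity coordinates via \eqref{eq:utopsi}--\eqref{eq:psitopsi12}. The energy identity for the free wave equation in the forward lightcone $\Gamma_T$ is monotone non-increasing (the flux through the characteristic boundary has the right sign), and under the change of variables $u(t,r)=(T-t)^{-1/2}\psi(\tau,\rho)$ one computes $\partial_r u=(T-t)^{-3/2}\psi_1'(\rho)$ and $\partial_t u=(T-t)^{-3/2}\psi_2(\rho)$, so the energy transforms (after $x\mapsto (T-t)\tilde x$) into
\[ \tfrac12\|\nabla\psi_1(\tau,\cdot)\|_{L^2(\B^3)}^2+\tfrac12\|\psi_2(\tau,\cdot)\|_{L^2(\B^3)}^2\leq \tfrac12\|\nabla\psi_1(0,\cdot)\|_{L^2(\B^3)}^2+\tfrac12\|\psi_2(0,\cdot)\|_{L^2(\B^3)}^2 \]
uniformly in $\tau\geq 0$. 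The missing $\|\psi_1(\tau,\cdot)\|_{L^2(\B^3)}$ piece of the $H^1$-norm is then recovered from the radial Hardy inequality $\|\psi_1/\rho\|_{L^2(\B^3)}\lesssim \|\nabla\psi_1\|_{L^2(\B^3)}$ (valid for radial $H^1$-functions in three dimensions), which yields the uniform bound for smooth data and extends to all of $\mc H$ by density.
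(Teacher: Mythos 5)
Your first two steps (quasi-dissipativity of $\tilde{\mb L}_0$ in a suitable inner product plus the range condition, then Lumer--Phillips) are viable, and your energy-flux computation in the lightcone is correct: for classical solutions one indeed gets the uniform bound on $\|\nabla\psi_1(\tau,\cdot)\|_{L^2(\B^3)}^2+\|\psi_2(\tau,\cdot)\|_{L^2(\B^3)}^2$. The genuine gap is the very last step. The radial Hardy inequality $\|\psi_1/\rho\|_{L^2(\B^3)}\lesssim\|\nabla\psi_1\|_{L^2(\B^3)}$ is \emph{false} on the bounded ball without a boundary condition: the constant function $\psi_1\equiv 1$ lies in $H^1(\B^3)$, has vanishing right-hand side, but nonzero left-hand side. (Hardy on $\R^3$ uses decay at infinity; on $\B^3$ one only has $\|f/|\cdot|\,\|_{L^2(\B^3)}\lesssim\|f\|_{H^1(\B^3)}$, which is useless here because the $L^2$-part of $\psi_1$ is exactly what you are trying to control.) So your argument only yields the $\dot H^1\times L^2$ seminorm bound uniformly in $\tau$, together with the exponentially growing bound $e^{\omega\tau}$ from Lumer--Phillips; the claimed uniform bound on the full $\mc H$-norm is not established. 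The missing ingredient is control of a zero-mode/boundary datum of $\psi_1(\tau,\cdot)$: for radial functions one has the identity $\|\rho f_1'+f_1\|_{L^2(0,1)}^2=\|\rho f_1'\|_{L^2(0,1)}^2+f_1(1)^2$, so what must be bounded in addition to the gradient is (for instance) the boundary value $\psi_1(\tau,1)$, not something Hardy can give you.

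This is precisely where the paper's route differs: it conjugates by the isomorphism $\mb G\mb f=(\rho f_2,\rho f_1'+f_1)=(\rho f_2,(\rho f_1)')$, whose $L^2(0,1)^2$-norm is \emph{equivalent} to $\|\mb f\|_{\mc H}$, and observes that $\mb G\tilde{\mb L}_0\mb G^{-1}$ is the first-order operator already treated in \cite{DonSch12}, whose closure generates a contraction semigroup on $L^2(0,1)^2$; transporting back via $\mb S_0(\tau)=\mb G^{-1}\mb T_0(\tau)\mb G$ yields generation and the uniform bound simultaneously. You can repair your proof in the same spirit without abandoning your framework: either run Lumer--Phillips directly in the equivalent inner product induced by $\mb G$ (in which the generator is genuinely dissipative, $\omega=0$, so the third step becomes unnecessary), or keep the physical-space argument but track the $1$d variable $\rho\psi_1$ via d'Alembert's formula (as the paper does later in Proposition \ref{prop:strich}) to control $\psi_1(\tau,1)$ uniformly, and then combine this with your flux bound to recover $\|\psi_1(\tau,\cdot)\|_{L^2(\B^3)}$.
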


\begin{proof}
We define $\mb G: H^1\times L^2(\B^3)\to L^2(0,1)^2$ by
\begin{equation}
\label{eq:G} \mb G \mb f(\rho):=\left (\begin{array}{c}
\rho f_2(\rho) \\ \rho f_1'(\rho)+f_1(\rho) \end{array} \right ). 
\end{equation}
An integration by parts and the Sobolev embedding $H^1(\frac12,1)\hookrightarrow L^\infty(\frac12,1)$
show that
 $\|\mb G \mb f\|_{L^2(0,1)^2}\simeq \|\mb f\|_{H^1\times L^2(\B^3)}$ and the inverse of
 $\mb G$ is given by
 \[ \mb G^{-1}\mb f(\rho)=\frac{1}{\rho}\left ( \begin{array}{c}
 \int_0^\rho f_2(s)\d s \\
 f_1(\rho) \end{array} \right ). \]
 Consequently, $\mb G$ is a Banach space isomorphism.
 Furthermore, $\mb G$ maps $\mc D(\tilde{\mb L}_0)$ to 
 \[ \tilde{\mc D}:=\{\mb u\in C^1([0,1])^2: u_1(0)=0\}. \]
 For $\mb u\in \mb G(\mc D(\tilde{\mb L}_0))\subset \tilde{\mc D}$ we obtain
 \[ \tilde{\mb L}_0\mb G^{-1}\mb u(\rho)=\left (\begin{array}{c}
 \frac{1}{\rho}u_1(\rho)-u_2(\rho)+\frac{1}{2\rho}\int_0^\rho u_2(s)\d s \\
 -u_1'(\rho)-\frac{1}{2\rho}u_1(\rho)+\frac{1}{\rho}u_2'(\rho) \end{array} \right ) \]
 and thus,
 \begin{equation}
 \label{eq:GLG}
  \mb G\tilde{\mb L}_0\mb G^{-1}\mb u(\rho)=
 \left (\begin{array}{c}
 -\rho u_1'(\rho)-\frac12 u_1(\rho)+u_2'(\rho) \\
 u_1'(\rho)-\rho u_2'(\rho)-\frac12 u_2(\rho) \end{array} \right ). 
 \end{equation}
 The operator $\mb G\tilde{\mb L}_0\mb G^{-1}$ with domain
 $\tilde{\mc D}$ was studied in detail\footnote{One needs to set
 $p=5$ in \cite{DonSch12}. This might seem odd because \cite{DonSch12} is confined to
 the case $p\leq 3$. However, the linear theory in \cite{DonSch12} works for all $p>1$.} 
 in \cite{DonSch12}, see also \cite{Don10}. 
 In particular, Lemma 3.1 in \cite{DonSch12} shows that $\mb G\tilde{\mb L}_0\mb G^{-1}$ is closable and its closure generates a semigroup $\mb T_0(\tau)$ on $L^2(0,1)^2$ satisfying $\|\mb T_0(\tau)\mb f\|_{L^2(0,1)^2}\leq \|\mb f\|_{L^2(0,1)^2}$ for all $\tau\geq 0$ and $\mb f\in L^2(0,1)^2$.
 The claim now follows by setting $\mb S_0(\tau)=\mb G^{-1}\mb T_0(\tau)\mb G$.
\end{proof}

\subsection{Strichartz estimates for the free evolution}
In the following we prove Strichartz estimates for the semigroup $\mb S_0$.
We employ an argument by Klainerman and Machedon \cite{KlaMac93} which
is based on d'Alembert's formula and the $L^2$-boundedness of the Hardy-Littlewood 
maximal function.
Unfortunately, this simple line of reasoning only works for $d=3$ and in radial symmetry.

\begin{proposition}[Strichartz estimates for $\mb S_0$]
\label{prop:strich}
Let $p\in [2,\infty]$ and $q\in [6,\infty]$ such that $\frac{1}{p}+\frac{3}{q}=\frac12$.
Then we have the bound
\[ \|[\mb S_0(\cdot)\mb f]_1\|_{L^p(\R_+)L^q(\B^3)}\lesssim \|\mb f\|_{\mc H} \]
for all $\mb f\in \mc H$.
As a consequence, we also have
\[ \left \|\int_0^\tau [\mb S_0(\tau-\sigma)\mb h(\sigma,\cdot)]_1 \d\sigma 
\right \|_{L^p_\tau(\R_+)L^q(\B^3)}
\lesssim \|\mb h\|_{L^1(\R_+)\mc H} \]
for all $\mb h\in C([0,\infty),\mc H)\cap L^1(\R_+,\mc H)$.
\end{proposition}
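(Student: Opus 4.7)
The plan is to undo the similarity change of variables and reduce the estimate to a standard radial Strichartz bound for the free wave equation on $\R\times\R^3$. The decisive observation is that the Strichartz admissibility condition $\frac{1}{p}+\frac{3}{q}=\frac{1}{2}$ makes the pullback of norms \emph{exact}: the $L^p_\tau L^q_\rho$ norm on the cylinder $\R_+\times\B^3$ coincides with the $L^p_t L^q_r$ norm on the backward lightcone $\Gamma_1$.

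More precisely, for $\mb f\in \mc D(\tilde{\mb L}_0)$ I would first verify, via the defining relation \eqref{eq:utopsi}, that
\[
[\mb S_0(\tau)\mb f]_1(\rho)=e^{-\tau/2}\,u(1-e^{-\tau},e^{-\tau}\rho),
\]
where $u$ is the classical solution of the free radial wave equation on $\R\times\R^3$ with initial data $u[0]=(f_1,f_2)$. The change of variables $t=1-e^{-\tau}$, $r=e^{-\tau}\rho$ then produces
\[
\|[\mb S_0(\cdot)\mb f]_1\|_{L^p(\R_+)L^q(\B^3)}^p=\int_0^1 (1-t)^{-p(\frac{3}{q}-\frac{1}{2})-1}\,\|u(t,\cdot)\|_{L^q(\B^3_{1-t})}^p\,\d t,
\]
and the exponent on $(1-t)$ vanishes precisely under $\frac{1}{p}+\frac{3}{q}=\frac{1}{2}$. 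By finite speed of propagation the right-hand side depends only on $(f_1,f_2)$ inside $\B^3$, so a bounded $H^1$-extension produces $(\tilde f_1,\tilde f_2)\in \dot H^1\times L^2(\R^3)$ with $\|(\tilde f_1,\tilde f_2)\|_{\dot H^1\times L^2}\lesssim \|\mb f\|_{\mc H}$, reducing the inequality to the standard radial Strichartz bound $\|u\|_{L^p(\R)L^q(\R^3)}\lesssim \|(\tilde f_1,\tilde f_2)\|_{\dot H^1\times L^2(\R^3)}$.

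For this last estimate, the endpoint $(p,q)=(\infty,6)$ is immediate from energy conservation together with $\dot H^1(\R^3)\hookrightarrow L^6(\R^3)$, while the endpoint $(p,q)=(2,\infty)$ is the Klainerman--Machedon radial bound \cite{KlaMac93}. For the latter, writing $v=ru$, the function $v$ solves the 1D wave equation on $\R$ with Dirichlet condition at $r=0$, and d'Alembert's formula applied to the odd extensions of $v_0=rf_1$ and $v_1=rf_2$ gives a representation in which $u(t,r)$ is pointwise controlled by truncated spatial averages of $v_0'$ and $v_1$. These averages are dominated by the Hardy--Littlewood maximal function, whose $L^2$-boundedness then closes the endpoint. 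The remaining admissible pairs follow by real interpolation between the two endpoints. I expect this radial endpoint to be the main technical hurdle, as it is precisely where spherical symmetry and the three-dimensional d'Alembert representation are indispensable.

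The inhomogeneous bound then follows from the homogeneous one by Minkowski's integral inequality,
\[
\left\|\int_0^\tau [\mb S_0(\tau-\sigma)\mb h(\sigma,\cdot)]_1\,\d\sigma\right\|_{L^p_\tau(\R_+)L^q(\B^3)}\leq \int_0^\infty \bigl\|[\mb S_0(\cdot)\mb h(\sigma,\cdot)]_1\bigr\|_{L^p_\tau(\R_+)L^q(\B^3)}\,\d\sigma\lesssim \|\mb h\|_{L^1(\R_+)\mc H},
\]
upon applying the homogeneous estimate to each slice $\mb h(\sigma,\cdot)\in \mc H$. The passage from $\mb f\in \mc D(\tilde{\mb L}_0)$ to general $\mb f\in\mc H$ is a routine density argument.
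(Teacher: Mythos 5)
Your proposal is correct, and at its core it runs on the same engine as the paper's proof: the $L^2_\tau L^\infty$ endpoint via d'Alembert's formula and the $L^2$-boundedness of the Hardy--Littlewood maximal function (Klainerman--Machedon, radial, $d=3$), the $L^\infty_\tau L^6$ endpoint via the uniform bound on $\mb S_0$ plus Sobolev embedding, interpolation in between, and Minkowski's inequality for the inhomogeneous bound. The difference is organizational. The paper never leaves similarity coordinates: it writes $[\mb S_0(\tau)\mb f]_1$ explicitly through d'Alembert's formula transplanted to the cylinder, dominates it pointwise by $e^{-\tau/2}\mc M(1_{[0,1]}|\cdot|f_2)(1-e^{-\tau})$ (and the analogous expression for $f_1$), and the scaling cancellation you isolate as the exact norm identity appears there simply as the substitution $s=1-e^{-\tau}$ absorbing the factor $e^{-\tau}$. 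You instead pull the estimate back to the physical lightcone, observe that admissibility $\frac1p+\frac3q=\frac12$ makes the Jacobian weight disappear exactly, and then invoke finite speed of propagation, a bounded radial extension $H^1\times L^2(\B^3)\to \dot H^1\times L^2(\R^3)$, and the standard global radial Strichartz estimate as a black box. What your route buys is conceptual transparency (the similarity-coordinate Strichartz bound is literally the classical one in disguise, for free admissible pairs) and reusability of known whole-space estimates; what the paper's route buys is self-containedness on $[0,1]$, avoiding the extension step and the global estimate altogether. Two small points you should make explicit: in the endpoint argument the term $v_0'=f_1+rf_1'$ forces you to control the one-dimensional norm $\|f_1\|_{L^2(\d r)}$, which on $\R^3$ requires Hardy's inequality for the (compactly supported) extension — the paper's local analogue is the bound $\|f_1\|_{L^2(0,1)}\lesssim\|f_1\|_{H^1(\B^3)}$ via integration by parts and one-dimensional Sobolev embedding; and the identification $[\mb S_0(\tau)\mb f]_1(\rho)=e^{-\tau/2}u(1-e^{-\tau},e^{-\tau}\rho)$ for $\mb f\in\mc D(\tilde{\mb L}_0)$ deserves a sentence (uniqueness for the abstract Cauchy problem generated by $\mb L_0$), though the paper uses the same identification implicitly.
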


\begin{proof}
Recall d'Alembert's formula which states that 
classical solutions\footnote{By a classical solution we mean $u\in C^2(\R\times [0,\infty))$ 
such that $\partial_r u(t,r)|_{r=0}=0$ for all $t\in \R$.} 
of $(\partial_t^2-\partial_r^2-\frac{2}{r}\partial_r)u(t,r)=0$ satisfy
\begin{align} 
u(t,r)&=\frac{1}{2r}\big [(t+r)u(0, |t+r|)-(t-r)u(0,|t-r|) \big ] \nonumber \\
&\quad +\frac{1}{2r}\int_{|t-r|}^{t+r} s
\partial_0 u(0,s)\d s 
\nonumber \\
&=\frac{1}{2r}\int_{t-r}^{t+r}\partial_s [s u(0,|s|)]\d s+\frac{1}{2r}\int_{|t-r|}^{t+r}
s\partial_0 u(0,s)\d s.
\end{align}
Consequently, we 
obtain for 
\[ \psi(\tau,\rho)=T^\frac12 e^{-\frac12\tau}u(T-Te^{-\tau}, Te^{-\tau}\rho) \]
the formula
\begin{align}
\label{eq:dAlem}
\psi(\tau,\rho)&=\frac{e^{-\frac12 \tau}}{2e^{-\tau}\rho}
\int_{1-e^{-\tau}-e^{-\tau}\rho}^{1-e^{-\tau}+e^{-\tau}\rho}\partial_s [s\psi(0,|s|)]\d s \nonumber \\
&\quad +\frac{e^{-\frac12 \tau}}{2e^{-\tau}\rho}
\int_{|1-e^{-\tau}-e^{-\tau}\rho|}^{1-e^{-\tau}+e^{-\tau}\rho}s[\partial_0 \psi(0,s)
+s\partial_s \psi(0,s)+\tfrac12 \psi(0,s)]\d s
\end{align}
for all $\tau> 0$ and $\rho\in [0,1]$.
Now let $\mb f=(0,f_2)$ with $f_2\in C^1([0,1])$.
In view of the transformations \eqref{eq:utopsi} and \eqref{eq:psitopsi12}, as well as 
Eq.~\eqref{eq:dAlem}, we infer
the explicit representation
\begin{align*} 
[\mb S_0(\tau)\mb f]_1(\rho)=\frac{e^{-\frac12 \tau}}{2e^{-\tau}\rho}
\int_{|1-e^{-\tau}-e^{-\tau}\rho|}^{1-e^{-\tau}+e^{-\tau}\rho} 1_{[0,1]}(s)sf_2(s)\d s
\end{align*}
for all $\tau>0$ and $\rho\in [0,1]$.
Since $1-e^{-\tau}-e^{-\tau}\rho\leq |1-e^{-\tau}-e^{-\tau}\rho|$, we obtain
\begin{align*}
|[\mb S_0(\tau)\mb f]_1(\rho)|&\leq \frac{e^{-\frac12\tau}}{2e^{-\tau}\rho}
\int_{1-e^{-\tau}-e^{-\tau}\rho}^{1-e^{-\tau}+e^{-\tau}\rho}
1_{[0,1]}(s)|sf_2(s)|\d s \\
&\leq e^{-\frac12\tau}\sup_{\rho>0}\left [\frac{1}{2e^{-\tau}\rho}
\int_{1-e^{-\tau}-e^{-\tau}\rho}^{1-e^{-\tau}+e^{-\tau}\rho} 1_{[0,1]}(s)|sf_2(s)|\d s \right ] \\
&=e^{-\frac12\tau}\mc M(1_{[0,1]}|\cdot|f_2)(1-e^{-\tau}),
\end{align*}
where $\mc M$ denotes the Hardy-Littlewood maximal function.
By the $L^2$-boundedness of $\mc M$, see e.g.~\cite{Gra14}, p.~88, Theorem 2.1.6, 
we infer
\begin{align*} 
\int_0^\infty \|[\mb S_0(\tau)\mb f]_1\|_{L^\infty(\B^3)}^2 \d \tau 
&\leq \int_0^\infty \left |\mc M(1_{[0,1]}|\cdot|f_2)(1-e^{-\tau})\right |^2 e^{-\tau} \d \tau \\
&=\int_0^1 \left |\mc M(1_{[0,1]}|\cdot|f_2)(s)\right |^2 \d s \\
&\lesssim \left \|1_{[0,1]}|\cdot|f_2\right \|_{L^2(\R)}^2 \simeq \|f_2\|_{L^2(\B^3)}^2 \\
&\lesssim \|\mb f\|_{\mc H}^2.
\end{align*}
For $\mb f=(f_1,0)$ with $f_1\in C^2([0,1])$ and $f_1'(0)=0$, we obtain
from Eqs.~\eqref{eq:dAlem} and \eqref{eq:utopsi} the representation
\begin{align*} [\mb S_0(\tau)\mb f]_1(\rho)&=\frac{e^{-\frac12\tau}}{2e^{-\tau}\rho}
\int_{1-e^{-\tau}-e^{-\tau}\rho}^{1-e^{-\tau}+e^{-\tau}\rho}\partial_s[s f_1(|s|)]\d s \\
&=\frac{e^{-\frac12\tau}}{2e^{-\tau}\rho}
\int_{1-e^{-\tau}-e^{-\tau}\rho}^{1-e^{-\tau}+e^{-\tau}\rho}1_{[-1,1]}(s)
\partial_s[s f_1(|s|)]\d s .
\end{align*}
Consequently, the same reasoning as above yields the bound
\begin{align*}
 \int_0^\infty \left \|[\mb S_0(\tau)\mb f]_1\right \|_{L^\infty(\B^3)}^2 \d\tau
&\lesssim \left \||\cdot|f_1'(|\cdot|)\right \|_{L^2(-1,1)}^2 
+\left \|f_1(|\cdot|)\right \|_{L^2(-1,1)}^2 \\
&\simeq \left \||\cdot|f_1'\right \|_{L^2(0,1)}^2 
+\left \|f_1\right \|_{L^2(0,1)}^2 \lesssim \|f_1\|_{H^1(\B^3)}^2 \\
&\lesssim \|\mb f\|_{\mc H}^2
\end{align*}
where $\|f_1\|_{L^2(0,1)}\lesssim \|f_1\|_{H^1(\B^3)}$ follows by means of an integration
by parts and the one-dimensional Sobolev embedding $H^1(\frac12,1) 
\hookrightarrow L^\infty(\frac12,1)$.
In summary, we have obtained the Strichartz estimate
\[ \|[\mb S_0(\cdot)\mb f]_1\|_{L^2(\R_+)L^\infty(\B^3)}\lesssim \|\mb f\|_{\mc H} \]
for all $\mb f=(f_1,f_2)\in C^2\times C^1([0,1])$ with $f_1'(0)=0$ and by a
density argument this extends to all $\mb f\in \mc H$.
Furthermore, by the Sobolev embedding $H^1(\B^3)\hookrightarrow L^6(\B^3)$ and the growth
bound from Proposition \ref{prop:gen} we infer
\[ \|[\mb S_0(\tau)\mb f]_1\|_{L^6(\B^3)}\lesssim \|[\mb S_0(\tau)\mb f]_1\|_{H^1(\B^3)}
\lesssim \|\mb S_0(\tau)\mb f\|_{\mc H}\lesssim \|\mb f\|_{\mc H} \]
which implies
$\|[\mb S_0(\cdot)\mb f]_1\|_{L^\infty(\R_+) L^6(\B^3)}\lesssim \|\mb f\|_{\mc H}$.
The general case now follows by interpolation. Indeed, for any $q\in [6,\infty]$ we have
\begin{align*}
 \left \|[\mb S_0(\tau)\mb f]_1\right \|_{L^q(\B^3)}&\leq 
\left \|[\mb S_0(\tau)\mb f]_1\right \|_{L^\infty(\B^3)}^{1-6/q}
\left \|[\mb S_0(\tau)\mb f]_1\right \|_{L^6(\B^3)}^{6/q} \\
&=:\varphi_\infty(\tau)^{1-6/q}\varphi_6(\tau)^{6/q}
\end{align*}
and thus,
\begin{align*}
 \left \|[\mb S_0(\cdot)\mb f]_1\right \|_{L^p(\R_+)L^q(\B^3)}&\leq
 \left \|\varphi_\infty^{1-6/q}\varphi_6^{6/q}\right \|_{L^p(\R_+)} \\
 &\leq \|\varphi_6\|_{L^\infty(\R_+)}^{6/q}\|\varphi_\infty\|_{L^2(\R_+)}^{1-6/q} \\
 &= \left \|[\mb S_0(\cdot)\mb f]_1\right \|_{L^\infty(\R_+)L^6(\B^3)}^{6/q}
  \left \|[\mb S_0(\cdot)\mb f]_1\right \|_{L^2(\R_+)L^\infty(\B^3)}^{1-6/q} \\
  &\lesssim \|\mb f\|_{\mc H},
 \end{align*}
 provided $p(1-\frac{6}{q})=2$, which is equivalent to $\frac{1}{p}+\frac{3}{q}=\frac12$.
 
 For the inhomogeneous estimate we employ Minkowski's inequality which yields
 \begin{align*}
& \left \|\int_0^\tau [\mb S_0(\tau-\sigma)\mb h(\sigma,\cdot)]_1
\d\sigma \right \|_{L^p_\tau(\R_+)L^q(\B^3)} \\
&=\left \|\int_0^\infty 1_{\R_+}(\tau-\sigma)[\mb S_0(\tau-\sigma)\mb h(\sigma,\cdot)]_1
\d\sigma \right \|_{L^p_\tau(\R_+)L^q(\B^3)} \\
&\leq \int_0^\infty \|1_{\R_+}(\tau-\sigma)[\mb S_0(\tau-\sigma)\mb h(\sigma,\cdot)]_1
\|_{L^p_\tau(\R_+)L^q(\B^3)}\d\sigma \\
&\leq \int_0^\infty \|[\mb S_0(\tau)\mb h(\sigma,\cdot)]_1\|_{L^p_\tau(\R_+)L^q(\B^3)}\d\sigma \\
&\lesssim \int_0^\infty \|\mb h(\sigma,\cdot)\|_{\mc H}\d\sigma
 \end{align*}
 by the homogeneous Strichartz estimate.
\end{proof}

\subsection{The linearized evolution}

The operator $\mb L': \mc H\to \mc H$ is bounded and thus, by the Bounded Perturbation
Theorem, $\mb L:=\mb L_0+\mb L'$ generates a semigroup $\mb S(\tau)$.
We also obtain the growth bound
\[ \|\mb S(\tau)\|_{\mc H}\lesssim e^{M\tau} \]
where $M=C\|\mb L'\|_\mc H$ but of course, this is far from optimal.
By a more detailed spectral analysis we obtain the following refined information.

\begin{proposition}
\label{prop:SG}
For the spectrum of $\mb L=\mb L_0+\mb L'$ we have
\[ \sigma(\mb L)\backslash \{1\}\subset \{z\in \C: \Re z\leq 0\} \]
and $1\in \sigma_p(\mb L)$.
The geometric eigenspace of the eigenvalue $1$ is one-dimensional and spanned by 
\[ \mb g(\rho)=\left ( \begin{array}{c}2 \\ 3 \end{array} \right ). \]
Furthermore, there exists a (bounded) projection $\mb P: \mc H\to \langle \mb g\rangle$
such that $[\mb P,\mb S(\tau)]=\mb 0$ for all $\tau\geq 0$, where $\mb S$ is the semigroup
generated by $\mb L$. 
As a consequence, we have $\mb S(\tau)\mb P\mb f=e^{\tau}\mb P\mb f$ for all $\tau\geq 0$
and $\mb f\in \mc H$.
Finally, for any $\epsilon>0$ there exists a constant $C_\epsilon>0$ such that
\[ \|\mb S(\tau)(\mb I-\mb P)\mb f\|_{\mc H}
\leq C_\epsilon e^{\epsilon\tau} \|(\mb I-\mb P)\mb f\|_{\mc H} \]
for all $\tau\geq 0$ and all $\mb f\in \mc H$.
\end{proposition}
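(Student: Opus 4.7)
My plan is to prove the four assertions in the order they are stated, treating the spectral localization as the crux and the semigroup statements as (almost) formal consequences.

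First, I would verify by direct substitution that $\mb g=(2,3)$ satisfies $(\tilde{\mb L}_0+\mb L')\mb g=\mb g$. Indeed, since $\mb g$ is constant, $\tilde{\mb L}_0\mb g=(-\tfrac12\cdot 2+3,\,-\tfrac32\cdot 3)=(2,-\tfrac92)$ and $\mb L'\mb g=(0,\tfrac{15}{4}\cdot 2)=(0,\tfrac{15}{2})$, so $(\tilde{\mb L}_0+\mb L')\mb g=(2,3)=\mb g$. Since $\mb g\in\mc D(\tilde{\mb L}_0)\cap \mc H$, this shows $1\in\sigma_p(\mb L)$ with $\mb g\in\ker(\mb L-1)$. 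The appearance of $1$ as an eigenvalue is of course not accidental: it reflects the one-parameter family of blowup solutions parametrized by the free blowup time $T$, and $\mb g$ is (up to a constant) $\partial_T\Psi^T|_{T=1}$.

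Next comes the spectral localization, which is the main technical point. Following the spectral analysis developed in \cite{DonSch12} (which has already been adapted to the present operator in \cite{DonSch14, DonSch15a, DonSch15b} for the radial case), I would reduce the eigenvalue equation $(\mb L-\lambda)\mb u=0$ to a single second-order ODE for $u_1$ on $(0,1)$ with a regular singular point at $\rho=0$ and at $\rho=1$. Using connection formulas for the Frobenius solutions at the two singular points, one shows that $\lambda\in\sigma_p(\mb L)$ with $\Re\lambda>0$ is equivalent to a transcendental matching condition that (for this particular potential) has $\lambda=1$ as its unique root in the closed right half-plane, with geometric multiplicity one. For $\Re\lambda$ large, the absence of eigenvalues follows from a Neumann-series resolvent construction showing $\lambda\in\rho(\mb L)$, which in combination with analytic Fredholm theory for $(\mb L-\lambda)^{-1}-(\mb L_0-\lambda)^{-1}$ implies that the unstable spectrum consists of at most finitely many isolated eigenvalues of finite algebraic multiplicity. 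The compactness input here stems from the fact that $\mb L'$ factors through $H^1\hookrightarrow L^2$ on the bounded domain $\B^3$.

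Given that $\lambda=1$ is isolated in $\sigma(\mb L)$, I define $\mb P$ as the Riesz projection
\[
\mb P:=\frac{1}{2\pi\I}\oint_{\gamma}(\lambda-\mb L)^{-1}\d\lambda,
\]
where $\gamma$ is a small positively oriented circle around $1$ that encloses no other spectrum. Standard functional calculus then yields $\mb P^2=\mb P$, $\mb P\mb L\subset \mb L\mb P$, and $[\mb P,\mb S(\tau)]=\mb 0$. Ruling out a nontrivial Jordan block at $\lambda=1$ reduces to showing $\dim\mathrm{Ran}(\mb P)=1$; this amounts to checking that the ODE $(\mb L-1)\mb v=\mb g$ has no solution in $\mc H$, which I would do by the variation-of-parameters representation and an explicit analysis of the asymptotic behavior of the fundamental solutions at $\rho=1$. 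Once rank one is established, $\mathrm{Ran}(\mb P)=\langle\mb g\rangle$ and $\mb S(\tau)\mb P\mb f=e^{\tau}\mb P\mb f$ is immediate from $\mb L|_{\mathrm{Ran}(\mb P)}=1$.

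The growth bound $\|\mb S(\tau)(\mb I-\mb P)\mb f\|_{\mc H}\leq C_\epsilon e^{\epsilon\tau}\|(\mb I-\mb P)\mb f\|_{\mc H}$ is then a consequence of the spectral-mapping/Gearhart--Prüss philosophy applied to the restriction $\mb L_\perp:=\mb L|_{\mathrm{Ran}(\mb I-\mb P)}$. Since $\sigma(\mb L_\perp)\subset\{\Re z\leq 0\}$ and $\mb L'$ is relatively compact with respect to $\mb L_0$, for any $\epsilon>0$ the resolvent $(\lambda-\mb L_\perp)^{-1}$ extends analytically to $\{\Re\lambda\geq\epsilon\}$; the bound follows from a Hille--Yosida-type estimate for the shifted generator $\mb L_\perp-\epsilon$, which generates a semigroup whose spectral bound is strictly negative. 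The principal obstacle in all of this is undoubtedly the second step: proving that there is no eigenvalue in the open right half-plane other than $\lambda=1$. This is not a soft argument; it requires genuinely global ODE information about the connection coefficient between the two singular points of the spectral equation, and for the specific potential at hand the required calculation has been performed in the earlier papers cited above, which I would invoke.
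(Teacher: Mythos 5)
Your overall strategy mirrors what the paper actually does: the substantive input -- that $\lambda=1$ is the only spectral point in the open right half-plane, that it is a simple eigenvalue spanned by $\mb g$, and that the Riesz projection commutes with the semigroup -- is taken from the earlier spectral analysis. The paper's own proof is essentially a two-line reduction: it conjugates by the isomorphism $\mb G$ from Proposition \ref{prop:gen}, observes that $\mb G\mb L'\mb G^{-1}$ is exactly the operator $L'$ of \cite{DonSch12} with $p=5$ (the footnote there notes the linear theory applies for all $p>1$), and quotes Lemmas 3.5--3.7 and Proposition 3.9 of that paper. Your direct verification that $(\tilde{\mb L}_0+\mb L')\mb g=\mb g$ is correct and is a nice sanity check, and your outline of the eigenvalue exclusion (Neumann series for large $\Re\lambda$, compactness of $\mb L'$ via $H^1(\B^3)\hookrightarrow L^2(\B^3)$, analytic Fredholm theory, connection-coefficient analysis of the spectral ODE) is faithful to how the quoted results are actually proved, so invoking the literature at that point is legitimate and matches the paper.

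The one step where your argument, as written, has a genuine gap is the final growth bound. You assert that since $\sigma(\mb L_\perp)\subset\{\Re z\leq 0\}$ and the resolvent continues analytically to $\{\Re\lambda\geq\epsilon\}$, a ``Hille--Yosida-type estimate for the shifted generator'' gives $\|\mb S(\tau)(\mb I-\mb P)\|\leq C_\epsilon e^{\epsilon\tau}$. This inference is false in general: for $C_0$-semigroups the spectral bound does not control the growth bound, and Gearhart--Pr\"uss in Hilbert space requires a \emph{uniform} bound on $\|(\lambda-\mb L_\perp)^{-1}\|$ on the half-plane $\{\Re\lambda\geq\epsilon\}$, which mere analytic continuation does not provide and which you do not establish. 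The standard repair -- and the one underlying \cite{DonSch12} -- uses precisely the compactness of $\mb L'$ that you already noted: compact perturbations do not increase the essential growth bound, so $\omega_{\mathrm{ess}}(\mb S)\leq\omega_{\mathrm{ess}}(\mb S_0)\leq 0$ by Proposition \ref{prop:gen}; hence any spectral point of $\mb L$ with real part $>\epsilon$ is an isolated eigenvalue of finite algebraic multiplicity, there are finitely many of them, and after removing the only unstable one ($\lambda=1$) with the rank-one Riesz projection, the growth bound on $\rg(\mb I-\mb P)$ follows from the general formula relating $\omega_0$ to $\omega_{\mathrm{ess}}$ and the eigenvalues outside the essential radius. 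With that substitution your proof closes; without it, the last assertion of the proposition is not justified.
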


\begin{proof}
With $\mb G$ given in Eq.~\eqref{eq:G}, we infer
\[ \mb G \mb L' \mb G^{-1} \mb f(\rho)=\left ( \begin{array}{c}
\frac{15}{4}\int_0^\rho f_2(s)\d s \\ 0 \end{array} \right ) \]
for $\mb f\in L^2(0,1)^2$.
Consequently, $\mb G \mb L'\mb G^{-1}$ is the operator $L'$ from \cite{DonSch12} (with $p=5$)
and the assertions follow from Lemmas 3.5, 3.6, 3.7, and
Proposition 3.9 in \cite{DonSch12}.
\end{proof}

\subsection{Explicit representation of the semigroup}
Let $\mb f \in C^2\times C^1([0,1])$ and for brevity we set 
$\tilde{\mb f}:=(\mb I-\mb P)\mb f$.
Then $\tilde{\mb f}\in C^2\times C^1([0,1])\subset \mc D(\mb L_0)$
since $\rg \mb P=\langle \mb g\rangle$ and $\mb g\in C^\infty\times C^\infty([0,1])$.
By \cite{Kat95}, p.~178, Theorem 6.17, the reduced resolvent 
$\lambda\mapsto \mb R_{\mb L}(\lambda)(\mb I-\mb P)$ has an analytic continuation 
to $\{z\in \C: \Re z>0\}$.
Consequently, by Laplace inversion, we obtain the representation formula
\begin{align*}
 \mb S(\tau)(\mb I-\mb P)\mb f&=\frac{1}{2\pi \I}\lim_{N\to\infty}
\int_{\epsilon-\I N}^{\epsilon+\I N}e^{\lambda\tau}\mb R_{\mb L}(\lambda)(\mb I-\mb P)\mb f\, \d \lambda  \\
&=\frac{1}{2\pi \I}\lim_{N\to\infty}
\int_{\epsilon-\I N}^{\epsilon+\I N}e^{\lambda\tau}\mb R_{\mb L}(\lambda)\tilde{\mb f}\, \d \lambda
\end{align*}
for any $\epsilon>0$, see \cite{EngNag00}, p.~234, Corollary 5.15.
Thus, in order to proceed, we need an explicit expression for the resolvent $\mb R_\mb L(\lambda)$.

If $\mb u=\mb R_\mb L(\lambda)\tilde{\mb f}$ 
then $(\lambda-\mb L)\mb u=\tilde{\mb f}$. Written out, this equation
reads
\[
\left \{
\begin{array}{l}
\rho u_1'(\rho)+(\lambda+\frac{1}{2})u_1(\rho)-u_2(\rho)=\tilde f_1(\rho) \\
-u_1''(\rho)-\frac{2}{\rho}u_1'(\rho)+\rho u_2'(\rho)+(\lambda+\frac{3}{2})
u_2(\rho)-\frac{15}{4}u_1(\rho)=\tilde f_2(\rho)
\end{array} \right . .
\]
Via the first equation we can express $u_2$ in terms of $u_1$ and $\tilde f_1$.
Inserting this into the second equation we find
\begin{align}
\label{eq:specinh}
-(1&-\rho^2)u''(\rho)-\tfrac{2}{\rho}u'(\rho)+3\rho u'(\rho)
+2\lambda \rho u'(\rho)
+\left [\lambda^2+2\lambda+\tfrac{3}{4}\right ]u(\rho)
-\tfrac{15}{4}u(\rho)=F_\lambda(\rho)
\end{align}
with $u=u_1$ and
\[ F_\lambda(\rho)=\rho \tilde f_1'(\rho)+(\lambda+\tfrac{3}{2}) \tilde f_1(\rho)
+\tilde f_2(\rho). \]
Consequently,
\begin{align*} 
[\mb R_{\mb L}(\lambda)\tilde{\mb f}]_1(\rho)=\int_0^1 G(\rho,s;\lambda)
[s\tilde f_1'(s)+(\lambda+\tfrac{3}{2})\tilde f_1(s)+\tilde f_2(s)]\d s 
\end{align*}
where $G$ is the Green function of Eq.~\eqref{eq:specinh}, uniquely defined, as we will see,
by the requirement $u_1\in H^1(\B^3)$.
The first component of $\mb S(\tau)(\mb I-\mb P)\mb f$ is therefore 
given by
\begin{align}
\label{eq:SG}
[\mb S(\tau)(\mb I-\mb P)\mb f]_1(\rho)=\frac{1}{2\pi \I}\lim_{N\to\infty}
\int_{\epsilon-\I N}^{\epsilon+\I N}e^{\lambda\tau}
\int_0^1 G(\rho,s;\lambda)F_\lambda(s)\d s\, \d \lambda 
\end{align}
for any $\epsilon>0$.

\section{Construction of the Green function}
\noindent Our goal in this section is the construction of the Green function
 for Eq.~\eqref{eq:specinh}. In fact, without additional effort,
 most parts of this construction can be carried
 out for the more general equation 
 \begin{align}
\label{eq:specV}
-(1&-\rho^2)u''(\rho)-\tfrac{2}{\rho}u'(\rho)+3\rho u'(\rho)
+2\lambda \rho u'(\rho) 
+\left [\lambda^2+2\lambda+\tfrac{3}{4}\right ]u(\rho)
+V(\rho)u(\rho)=F_\lambda(\rho)
\end{align}
where the ``potential'' $V$ is an arbitrary prescribed function in $C^\infty([0,1])$.
Thus, for future reference, we will consider the more general version Eq.~\eqref{eq:specV}
whenever possible.

\subsection{Preliminaries}
Throughout we will treat the potential $V$ perturbatively. Note that the 
\emph{free equation}, i.e., Eq.~\eqref{eq:specV} with $V=F_\lambda=0$, has the Frobenius indices
$\{0,-1\}$ at $\rho=0$ and $\{0, \frac12-\lambda\}$ at $\rho=1$, respectively.
As usual, it is convenient to remove the first-order derivative by setting
\begin{equation}
\label{eq:utov}
 v(\rho):=\rho(1-\rho^2)^{\frac14+\frac{\lambda}{2}}u(\rho). 
 \end{equation}
Then Eq.~\eqref{eq:specV} with $F_\lambda=0$ is equivalent to
\begin{align}
\label{eq:specv}
 v''(\rho)&+\frac{\lambda(1-\lambda)+\frac34}{(1-\rho^2)^2}
v(\rho) =\frac{V(\rho)}{1-\rho^2}v(\rho).
\end{align}
Note that if $v(\cdot;\lambda)$ is a solution to Eq.~\eqref{eq:specv} then so is
$v(\cdot;1-\lambda)$.
This convenient symmetry will simplify many computations in the sequel.

\subsection{Construction of a fundamental system}
Eq.~\eqref{eq:specv} with $V=0$ has an explicit 
fundamental system given by
\begin{align*}
 \psi_1(\rho;\lambda)&=(1+\rho)^{\frac34-\frac{\lambda}{2}}(1-\rho)^{\frac14+\frac{\lambda}{2}} \\
 \tilde \psi_1(\rho;\lambda)&=\psi_1(\rho;1-\lambda)=
 (1+\rho)^{\frac14+\frac{\lambda}{2}}(1-\rho)^{\frac34-\frac{\lambda}{2}}.
\end{align*}
Strictly speaking, this is a fundamental system only if $\lambda\not=\frac12$.
However, we are interested in $\lambda$ close to the imaginary axis, so
this issue does not bother us.
We define a third solution $\psi_0$ by
\[ \psi_0(\rho;\lambda):=\psi_1(\rho;\lambda)-\tilde \psi_1(\rho;\lambda) \]
and note that $\psi_0(0;\lambda)=0$.
We now show that the fundamental system $\{\psi_1,\tilde\psi_1\}$ can be perturbed to
yield a fundamental system for Eq.~\eqref{eq:specv}.
In the following we construct solutions to ODEs by Volterra iterations.
For the basic existence theory in this context we refer to \cite{DeiTru79} or
\cite{SchSofSta10}.
Furthermore, for brevity it is useful to introduce the following notation.

\begin{definition}
\label{def:O}
For a function $f: I\subset \R\to \C$, $x_0\in \overline I$, and $\alpha\in \R$, we write
$f(x)=\O((x-x_0)^\alpha)$ if
\[ |\partial_x^j f(x)|\leq C_j |x-x_0|^{\alpha-j} \]
for all $x\in I$ and $j\in \N_0$.
Similarly, $f(x)=\O(\langle x\rangle^\alpha)$ means
\[ |\partial_x^j f(x)|\leq C_j \langle x\rangle^{\alpha-j} \]
for all $x\in I$ and $j\in \N_0$.
If $f: \R\to \C$ is odd, we indicate this by using the symbol $\Oo$, e.g.~$f(x)=\Oo(\langle x \rangle^{-1})$.

An analogous notation is used for functions of more than one variable, e.g.~for
$f: U\subset \R^2\to \C$ and $\alpha,\beta \in \R$, we write $f(x,y)=\O(x^\alpha \langle y\rangle^\beta)$
if
\[ |\partial_x^j \partial_y^k f(x,y)|\leq C_{j,k}|x|^{\alpha-j}\langle y\rangle^{\beta-k} \]
for all $(x,y)\in U$ and $j,k\in \N_0$.
Functions of this type are said to \emph{behave like symbols} or to be of \emph{symbol type}.
\end{definition}

\begin{remark}
As a consequence of the Leibniz rule, symbol behavior is stable under algebraic operations, 
e.g.~$\O(x^\alpha)\O(x^\beta)=\O(x^{\alpha+\beta})$.
In addition, if $f(x)=\O(x^\alpha)$, the chain rule implies $f(\langle x\rangle^{-1})=\O(\langle x\rangle^{-\alpha})$.
\end{remark}

\begin{lemma}
\label{lem:v1}
With $\lambda=\epsilon+\I\omega$, Eq.~\eqref{eq:specv} has a solution
$v_1(\cdot;\lambda)$ of the
form\footnote{Of course, the error function $\O((1-\rho)\langle\omega\rangle^{-2})$ depends
on $\epsilon$ as well but since this dependence is inessential, we suppress it
in the notation.}
\begin{align*} 
v_1(\rho;\lambda)&=\psi_1(\rho;\lambda)[1+\tfrac{1}{1-2\lambda}a_1(\rho)+\O((1-\rho)\langle\omega \rangle^{-2})] 
\end{align*}
for all $\rho \in [0,1)$, $\omega \in \R$, and $\epsilon\in [0,\frac13]\cup [\frac23,1]$, where
\[ a_1(\rho)=-\int_\rho^1 V(s)\d s. \]
Furthermore, another solution $\tilde v_1(\cdot;\lambda)$ is given by
\[ \tilde v_1(\rho;\lambda)=\tilde \psi_1(\rho;\lambda)
[1-\tfrac{1}{1-2\lambda}a_1(\rho)+\O((1-\rho)\langle\omega \rangle^{-2})]. \]
%Finally,
%\[ v_1(\cdot;\overline{\lambda})=\overline{v_1(\cdot;\lambda)}
%,\qquad \tilde v_1(\cdot;\overline{\lambda})=\overline{\tilde v_1(\cdot;\lambda)}. \]
\end{lemma}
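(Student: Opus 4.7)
The plan is to set up a Volterra integral equation for $v_1$ by treating $\frac{V(\rho)}{1-\rho^2}v$ as a forcing term in Eq.~\eqref{eq:specv} and applying variation of parameters with respect to the free fundamental system $\{\psi_1,\tilde\psi_1\}$. Two elementary identities drive the whole analysis: the Wronskian satisfies $W(\psi_1,\tilde\psi_1)=2\lambda-1$, and $\psi_1(s;\lambda)\tilde\psi_1(s;\lambda)=1-s^2$. Requiring the particular solution to vanish at $\rho=1$ yields the integral equation
\[ v_1(\rho;\lambda) = \psi_1(\rho;\lambda) + \int_\rho^1 \frac{\psi_1(\rho;\lambda)\tilde\psi_1(s;\lambda)-\tilde\psi_1(\rho;\lambda)\psi_1(s;\lambda)}{2\lambda-1}\,\frac{V(s)}{1-s^2}\,v_1(s;\lambda)\,\d s, \]
which I would solve by Picard iteration on $[\rho_0,1)$ for some $\rho_0$ sufficiently close to $1$ and then extend to $[0,1)$ by standard ODE continuation.

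The first Picard iteration, with $v_1^{(0)}=\psi_1$, naturally splits into two pieces. Using $\psi_1(s)\tilde\psi_1(s)=1-s^2$, the $\psi_1(\rho)\tilde\psi_1(s)$-piece collapses to
\[ \frac{\psi_1(\rho;\lambda)}{2\lambda-1}\int_\rho^1 V(s)\,\d s = \frac{\psi_1(\rho;\lambda)\,a_1(\rho)}{1-2\lambda}, \]
which is exactly the displayed leading correction. The remaining piece equals
\[ -\frac{\tilde\psi_1(\rho;\lambda)}{2\lambda-1}\int_\rho^1 V(s)\,\frac{\psi_1(s;\lambda)}{\tilde\psi_1(s;\lambda)}\,\d s, \]
and here the key observation is that $\psi_1/\tilde\psi_1=((1-s)/(1+s))^{\lambda-1/2}$ carries an oscillatory factor $e^{\I\omega\phi(s)}$ with $\phi(s)=\log((1-s)/(1+s))$ and $\phi'(s)=-2/(1-s^2)$. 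A single non-stationary-phase integration by parts, writing $e^{\I\omega\phi(s)}=(\I\omega\phi'(s))^{-1}\partial_s e^{\I\omega\phi(s)}$, produces a boundary value at $s=\rho$ of size $\O((1-\rho)^{\epsilon+1/2}\langle\omega\rangle^{-1})$ together with a remainder integral of the same order; the $s=1$ boundary contributes nothing because the weight $(1-s)^{\epsilon+1/2}$ vanishes there.

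Multiplying by the prefactor and dividing by $\psi_1(\rho;\lambda)$, the elementary pointwise bound $|\tilde\psi_1(\rho;\lambda)/\psi_1(\rho;\lambda)|\lesssim(1-\rho)^{1/2-\epsilon}$ combined with $|2\lambda-1|\gtrsim\langle\omega\rangle$ (precisely where the restriction $\epsilon\in[0,\tfrac13]\cup[\tfrac23,1]$ enters, keeping $\epsilon$ uniformly away from the degenerate value $\tfrac12$) yields a remainder of size $\O((1-\rho)\langle\omega\rangle^{-2})$ as claimed. To close the Picard iteration I would verify, using the same single IBP to gain $\langle\omega\rangle^{-1}$, that the Volterra operator is a contraction on a weighted sup-norm space of functions of the form $\psi_1\cdot[1+\O(\langle\omega\rangle^{-1})]$; the Neumann tail then lies in $\psi_1\cdot\O((1-\rho)\langle\omega\rangle^{-2})$ and is absorbed into the error term. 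Symbol-type derivative bounds follow by differentiating the integral equation in $\rho$ and repeating the same oscillatory IBP analysis; each $\rho$-derivative costs a factor of $(1-\rho)^{-1}$ that is compensated by the vanishing of the kernel as $s\to\rho\to 1$.

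Finally, $\tilde v_1$ is obtained by applying the entire construction under the symmetry $\lambda\mapsto 1-\lambda$, which interchanges $\psi_1\leftrightarrow\tilde\psi_1$, flips the sign of the Wronskian, and hence of the leading correction, producing the stated form. The main technical obstacle is balancing the three small parameters $(1-\rho)$, $|2\lambda-1|^{-1}$ and the oscillatory gain from integration by parts: without the restriction of $\epsilon$ away from $\tfrac12$, the Wronskian factor $|2\lambda-1|^{-1}$ would not provide uniform $\langle\omega\rangle^{-1}$ decay, the Picard iteration would fail to be a contraction for small $\omega$, and the claimed $\langle\omega\rangle^{-2}$ remainder estimate would collapse.
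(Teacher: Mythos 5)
Your construction is essentially the paper's: the same Volterra equation built from $\{\psi_1,\tilde\psi_1\}$ with Wronskian $2\lambda-1$, the same use of $\psi_1\tilde\psi_1=1-s^2$ to isolate the $\frac{1}{1-2\lambda}a_1(\rho)$ correction, one integration by parts in the oscillatory cross term to gain $\langle\omega\rangle^{-1}$, the observation that the restriction $\epsilon\in[0,\frac13]\cup[\frac23,1]$ enters only through $|1-2\lambda|\gtrsim\langle\omega\rangle$, and $\tilde v_1(\cdot;\lambda)=v_1(\cdot;1-\lambda)$ for the second solution. At the level of pointwise bounds your leading-order computation is correct, and the exponent bookkeeping $(1-\rho)^{\frac12-\epsilon}(1-\rho)^{\frac12+\epsilon}=(1-\rho)$ indeed works for all admissible $\epsilon$.

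There is, however, a genuine gap: in this paper $\O((1-\rho)\langle\omega\rangle^{-2})$ is a symbol-type assertion (Definition \ref{def:O}), i.e.\ it includes the bounds $|\partial_\rho^j\partial_\omega^k(\cdot)|\leq C_{j,k}(1-\rho)^{1-j}\langle\omega\rangle^{-2-k}$, and the $\omega$-derivatives are precisely what is consumed later (Lemma \ref{lem:osc} performs two integrations by parts in $\omega$). Your proposal addresses only $\rho$-derivatives, and the $\omega$-derivatives are not obtained by simply ``repeating the same oscillatory IBP analysis'': in your expression the oscillatory $\rho$-dependence sits in the external factor $\tilde\psi_1(\rho;\lambda)/\psi_1(\rho;\lambda)=\big(\tfrac{1-\rho}{1+\rho}\big)^{\frac12-\lambda}$, so $\partial_\omega$ brings down $\log\tfrac{1-\rho}{1+\rho}$, which produces logarithmic losses in $(1-\rho)$ and no gain in $\omega$. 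One must first combine the two oscillatory factors into the single phase $\varphi(s)-\varphi(\rho)$ with $\varphi(s)=\tfrac12\log\tfrac{1+s}{1-s}$, change variables $y=\varphi(s)$, and rescale $y\mapsto y/\omega$ so that neither $\rho$- nor $\omega$-derivatives ever hit an oscillatory factor; this is exactly the portion of the paper's proof devoted to the symbol bounds and it is absent from your plan. A secondary, fixable point: the contraction on $[\rho_0,1)$ followed by ``standard ODE continuation'' is both unnecessary and, as stated, insufficient, since continuation alone does not yield the $\langle\omega\rangle^{-2}$-accurate expansion with symbol bounds on $[0,\rho_0]$, which is still a nontrivial claim for large $|\omega|$. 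Because $\int_0^1\sup_{\rho}|K(\rho,s;\lambda)|\,\d s\lesssim\langle\lambda\rangle^{-1}$, the Volterra equation is solvable on all of $[0,1]$ without any smallness, and the resulting bound $|h_1-1|\lesssim(1-\rho)\langle\lambda\rangle^{-1}$, re-inserted once into the equation, gives the expansion uniformly on $[0,1)$ -- this is how the paper proceeds and it removes the continuation step entirely.
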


\begin{proof}
We set
\[ W(\lambda):=W(\psi_1(\cdot;\lambda), \tilde \psi_1(\cdot;\lambda))=-1+2\lambda. \]
The variation of constants formula suggests to look for a solution $v_1$ of the integral
equation
\begin{align*} v_1(\rho;\lambda)=\psi_1(\rho;\lambda)&+\frac{\psi_1(\rho;\lambda)}{W(\lambda)}
\int_\rho^{\rho_1}\tilde \psi_1(s;\lambda)\frac{V(s)}{1-s^2}v_1(s;\lambda)\d s \\
&-\frac{\tilde \psi_1(\rho;\lambda)}{W(\lambda)}\int_\rho^{\rho_1}
\psi_1(s;\lambda)\frac{V(s)}{1-s^2}v_1(s;\lambda)\d s
\end{align*}
where $\rho_1\in [0,1]$ is a constant that will be chosen later.
Since $|\psi_1(\rho;\lambda)|>0$ for all $\rho \in (0,1)$ and $\lambda\in \C$, 
we may set $h_1:=\frac{v_1}{\psi_1}$
which yields the Volterra equation
\begin{equation}
\label{eq:h1} h_1(\rho;\lambda)=1+\int_\rho^{\rho_1}K(\rho,s;\lambda)h_1(s;\lambda)\d s 
\end{equation}
with the kernel
\[ K(\rho,s;\lambda)=\frac{1}{W(\lambda)}\left [\psi_1(s;\lambda)\tilde \psi_1(s;\lambda)
-\frac{\tilde\psi_1(\rho;\lambda)}{\psi_1(\rho;\lambda)}\psi_1(s;\lambda)^2 \right ]
\frac{V(s)}{1-s^2}. \]
Explicitly, we have
\begin{align*}
\psi_1(s;\lambda)\tilde\psi_1(s;\lambda)&=1-s^2 \\
\frac{\tilde \psi_1(\rho;\lambda)}{\psi_1(\rho;\lambda)}\psi_1(s;\lambda)^2
&=\left (\frac{1-\rho}{1+\rho} \right )^{\frac12-\lambda}
(1+s)^2 \left (\frac{1-s}{1+s} \right )^{\frac12+\lambda}
\end{align*}
which implies
\[ \left |\frac{\tilde\psi_1(\rho;\lambda)}{\psi_1(\rho;\lambda)}\psi_1(s;\lambda)^2\right |
\lesssim (1-\rho)^{\frac12-\Re\lambda}(1-s)^{\frac12+\Re\lambda}
\lesssim (1-\rho)^\frac12 (1-s)^\frac12 \]
for all $0\leq \rho\leq s\leq 1$ and $\Re\lambda\geq 0$.
Consequently, we obtain the bound
\[ |K(\rho,s;\lambda)|\lesssim (1-\rho)^\frac12 (1-s)^{-\frac12}\langle \lambda\rangle^{-1} \]
for all $0\leq \rho\leq s< 1$.
This allows us to choose $\rho_1=1$ and we infer
\[ \int_0^1 \sup_{\rho\in (0,1)}|K(\rho,s;\lambda)|\d s\lesssim \langle \lambda\rangle^{-1}.  \]
Thus, Eq.~\eqref{eq:h1} has a solution $h_1$ satisfying
\[ |h_1(\rho;\lambda)-1|\lesssim \int_\rho^1 |K(\rho,s;\lambda)|\d s \lesssim (1-\rho)
\langle \lambda \rangle^{-1} \]
for all $\rho\in [0,1]$.
Re-inserting this into Eq.~\eqref{eq:h1} we find
\begin{align*} 
h_1(\rho;\lambda)&=1+\int_\rho^1 K(\rho,s;\lambda)[1+O((1-s)\langle\lambda\rangle^{-1})]
\d s \\
&=1+\int_\rho^1 K(\rho,s;\lambda)\d s+O((1-\rho)^2 \langle\lambda\rangle^{-2}).
\end{align*}
Note that
\begin{align*}
\int_\rho^1 K(\rho,s;\lambda)\d s&=\frac{1}{W(\lambda)}\left [\int_\rho^1 V(s)\d s
-\left (\frac{1-\rho}{1+\rho}\right )^{\frac12-\lambda}
I(\rho;\lambda) \right ]
\end{align*}
where
\[ I(\rho;\lambda)=\int_\rho^1 \left (\frac{1-s}{1+s} \right )^{-\frac12+\lambda}
V(s)\d s. \]
We have
\[ \left (\frac{1-s}{1+s}\right )^{-\frac12+\lambda}=-\frac{(1+s)^2}{1+2\lambda}
\partial_s \left (\frac{1-s}{1+s} \right )^{\frac12+\lambda} \]
and thus,
\begin{align*}
I(\rho;\lambda)&=-\frac{1}{1+2\lambda}\int_\rho^1 
\partial_s \left (\frac{1-s}{1+s} \right )^{\frac12+\lambda}
 (1+s)^2 V(s)\d s \\
&=\frac{1}{1+2\lambda}\left (\frac{1-\rho}{1+\rho} \right )^{\frac12+\lambda}
 (1+\rho)^2 V(\rho)
+\frac{1}{1+2\lambda}\int_\rho^1 \left (\frac{1-s}{1+s}\right )^{\frac12+\lambda}
\partial_s[(1+s)^2V(s)]\d s.
\end{align*}
Consequently, we infer
\begin{align*}
 W(\lambda)\int_\rho^1 K(\rho,s;\lambda)\d s&=\int_\rho^1 V(s)\d s
-\frac{1}{1+2\lambda}(1-\rho^2) V(\rho) \\
&\quad -\frac{1}{1+2\lambda}\left (\frac{1-\rho}{1+\rho}\right )^{\frac12-\lambda}
\int_\rho^1 \left (\frac{1-s}{1+s}\right )^{\frac12+\lambda}
\partial_s[(1+s)^2V(s)]\d s \\
&=-a_1(\rho)+O((1-\rho)\langle\lambda\rangle^{-1}).
\end{align*}

The bounds on the derivatives 
are proved as follows.
One sets $\varphi(s):=-\frac12 \log(1-s)+\frac12 \log(1+s)$ and 
notes that $\varphi(s)>0$, $\varphi'(s)=\frac{1}{1-s^2}\geq 1$ for all $s\in (0,1)$.
Next, one observes that the inverse of the 
diffeomorphism $\varphi: (0,1)\to (0,\infty)$, given explicitly by
\[ \varphi^{-1}(y)=\frac{e^{2y}-1}{e^{2y}+1}, \] satisfies the bounds
\[ |\partial_y^j \varphi^{-1}(y)|\leq C_j e^{-2y} \]
for all $y\geq 0$ and $j\in \N$.
Then one introduces $y=\varphi(s)$ as a new integration variable and rescales to remove
the $\omega$-dependence from the oscillatory factor.
As a consequence, no derivatives fall on oscillatory terms. 
A simple induction then yields the stated bounds.
To be more precise, recall that the oscillatory
part of $\int_\rho^1 K(\rho,s;\lambda)\d s$ 
consists of the term
\begin{align*}
&\frac{1-\rho}{1+\rho}\int_\rho^1 \left (\frac{1-\rho}{1+\rho}\right )^{-\frac12-\lambda}
\left (\frac{1-s}{1+s}\right )^{\frac12+\lambda}V_1(s)\d s 
 =\frac{1-\rho}{1+\rho}
\int_\rho^1 e^{-(1+2\lambda)[\varphi(s)-\varphi(\rho)]}V_1(s)\d s
\end{align*}  
where $V_1(s):=\partial_s [(1+s)^2 V(s)]$.
Thus, we consider
\begin{align*} J_\epsilon(\rho;\omega):&=\int_\rho^1 e^{(-1-2\epsilon-2\I\omega)
[\varphi(s)-\varphi(\rho)]} V_1(s)\d s \\
&=\int_{\varphi(\rho)}^\infty e^{(-1-2\epsilon-2\I\omega)[y-\varphi(\rho)]}
V_1(\varphi^{-1}(y))(\varphi^{-1})'(y)\d y \\
&=\int_0^\infty e^{(-1-2\epsilon-2\I\omega)y}\tilde V_1(y+\varphi(\rho))\d y
\end{align*}
where $\tilde V_1(y):=V(\varphi^{-1}(y))(\varphi^{-1})'(y)$.
Note that in this representation it is already evident that no $\rho$-derivatives fall on
oscillatory factors.
Furthermore, 
\[ \partial_\rho^j\partial_\omega^k J_\epsilon(\rho;\omega) 
=(-2\I)^k\int_0^\infty y^k e^{(-1-2\epsilon-2\I\omega)y}\partial_\rho^j \tilde V_1(y+\varphi(\rho))\d y \]
which yields $|\partial_\rho^j \partial_\omega^k J_\epsilon(\rho;\omega)|\leq C_{j,k}(1-\rho)^{-j}$.
Thus, we may safely assume $|\omega|\geq 1$.
Then we rescale to obtain
\[
J_\epsilon(\rho;\omega)=\omega^{-1}\int_0^\infty e^{(-1-2\epsilon)y/\omega}e^{-2\I y}
\tilde V_1(\tfrac{y}{\omega}+\varphi(\rho))\d y
\]
which yields
\[ | \partial_\rho^j\partial_\omega^k J_\epsilon(\rho;\omega)|\leq C_{j,k} (1-\rho)^{-j}\omega^{-k}. \]
Consequently, the claimed bounds follow inductively.
The second solution is given by 
$\tilde v_1(\cdot;\lambda)=v_1(\cdot;1-\lambda)$.
%Finally, since $\psi_1(\cdot;\overline{\lambda})=\overline{\psi_1(\cdot;\lambda)}$,
%we infer 
%\[ W(v_1(\cdot;\overline \lambda), \overline{v_1(\cdot;\lambda)})=0 \]
%and $v_1(\rho;\overline{\lambda})\sim \overline{v_1(\rho;\lambda)}$ as $\rho \to 1-$
%implies
% $v_1(\cdot;\overline{\lambda})=\overline{v_1(\cdot;\lambda)}$.
\end{proof}

Next, we perturb $\psi_0$ to obtain a third solution 
of Eq.~\eqref{eq:specv}.

\begin{lemma}
\label{lem:v0}
There exists a $\delta_0>0$ such that
Eq.~\eqref{eq:specv}, with $\lambda=\epsilon+\I\omega$, has a solution $v_0$ of the
form
\[ 
v_0(\rho;\lambda)=\psi_0(\rho;\lambda)[1+\O(\rho^2\langle\omega\rangle^0)] 
\]
for all $\rho \in [0,\delta_0\langle\omega\rangle^{-1}]$, $\omega\in \R$, and
$\epsilon\in [0,\frac13]$.
%Furthermore, $\overline{v_0(\cdot;\lambda)}=v_0(\cdot;\overline\lambda)$ for all $\lambda \in \overline\Omega$.
\end{lemma}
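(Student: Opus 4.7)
The plan is to construct $v_0$ as a perturbation of $\psi_0$ via the substitution $v_0 = \psi_0 h_0$. Since $\psi_0$ solves the free version of Eq.~\eqref{eq:specv}, this reduction collapses the ODE to $(\psi_0^2 h_0')' = \tfrac{V}{1-\rho^2}\psi_0^2 h_0$. Imposing $h_0(0)=1$ (so that $v_0'(0)=\psi_0'(0)=1-2\lambda$) together with the regularity-forced condition $h_0'(0)=0$ and integrating twice from $0$ leads to the Volterra equation
\[
h_0(\rho;\lambda) = 1 + \int_0^\rho K(\rho,s;\lambda)\,h_0(s;\lambda)\,ds,\qquad K(\rho,s;\lambda) := \frac{V(s)\psi_0(s;\lambda)^2}{1-s^2}\int_s^\rho \frac{d\sigma}{\psi_0(\sigma;\lambda)^2}.
\]

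The decisive quantitative input will be a two-sided bound $|\psi_0(\rho;\lambda)|\simeq |1-2\lambda|\rho$ on the truncated interval $[0,\delta_0\langle\omega\rangle^{-1}]$, valid once $\delta_0$ is chosen sufficiently small. To establish it I would use $\psi_1\tilde\psi_1=1-\rho^2$ and $\psi_1/\tilde\psi_1 = ((1+\rho)/(1-\rho))^{1/2-\lambda}$ to write
\[
\psi_0(\rho;\lambda) = \tilde\psi_1(\rho;\lambda)\bigl(e^{(1-2\lambda)\rho+E(\rho;\lambda)}-1\bigr),\qquad E(\rho;\lambda):=(\tfrac12-\lambda)\bigl[\log\tfrac{1+\rho}{1-\rho}-2\rho\bigr].
\]
On $\rho\leq\delta_0\langle\omega\rangle^{-1}$ one checks $|(1-2\lambda)\rho|\lesssim \delta_0$ and $|E(\rho;\lambda)|\lesssim \langle\omega\rangle\rho^3\lesssim \delta_0^2\rho$, so the exponent stays in a small disc around $0\in\C$, and the elementary inequality $|e^z-1|\simeq |z|$ on such a disc delivers the claimed two-sided bound on $|\psi_0|$.

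Feeding this into the kernel, the $|1-2\lambda|^2$ factors cancel and one obtains the uniform estimate $|K(\rho,s;\lambda)|\lesssim s$, hence $\int_0^\rho|K(\rho,s;\lambda)|\,ds\lesssim \rho^2$. Standard Volterra iteration (after shrinking $\delta_0$ if necessary) then produces a continuous solution $h_0$ satisfying $|h_0(\rho;\lambda)-1|\lesssim \rho^2$, uniformly in $\omega\in\R$ and $\epsilon\in[0,\tfrac13]$. The full symbol-type bounds of Definition \ref{def:O} on mixed $\rho$- and $\omega$-derivatives of $h_0-1$ will follow by induction on the order of differentiation, along the lines of Lemma \ref{lem:v1}: each $\rho$-derivative of the kernel costs at most one factor of $\rho^{-1}$ and is absorbed by the same estimates, while each $\omega$-derivative hitting $(1-2\lambda)\rho$ or the exponential $e^{(1-2\lambda)\rho+E}$ yields at most one extra factor $\rho\leq\delta_0\langle\omega\rangle^{-1}$, supplying the required $\langle\omega\rangle^{-1}$ gain.

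The main technical obstacle I anticipate is precisely securing the uniform lower bound $|\psi_0(\sigma;\lambda)|\gtrsim|1-2\lambda|\sigma$ on the \emph{entire} interval $[0,\delta_0\langle\omega\rangle^{-1}]$: it is this bound that tames the apparent singularity of $\psi_0(\sigma;\lambda)^{-2}$ in the kernel and explains why the $\omega$-dependent truncation $\rho\leq\delta_0\langle\omega\rangle^{-1}$ — exactly the regime on which the linearization $e^{(1-2\lambda)\rho}-1\simeq (1-2\lambda)\rho$ remains valid — is the natural and essentially sharp domain for the construction.
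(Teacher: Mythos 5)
Your proposal is correct and takes essentially the paper's route: your reduction-of-order kernel is in fact identical to the paper's variation-of-constants kernel, since $\int_s^\rho \psi_0(\sigma;\lambda)^{-2}\d\sigma = W(\lambda)^{-1}\big[\tfrac{\psi_1(\rho;\lambda)}{\psi_0(\rho;\lambda)}-\tfrac{\psi_1(s;\lambda)}{\psi_0(s;\lambda)}\big]$ with $W(\lambda)=-1+2\lambda$, and your two-sided bound $|\psi_0(\rho;\lambda)|\simeq |1-2\lambda|\rho$ on $\rho\leq\delta_0\langle\omega\rangle^{-1}$ is the same input the paper obtains via the Taylor expansion $\psi_0(\rho;\lambda)=(1-2\lambda)\rho[1+O(\rho\langle\lambda\rangle)]$. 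The remaining steps --- the kernel estimate $|K(\rho,s;\lambda)|\lesssim s$, the Volterra iteration giving $|h_0-1|\lesssim\rho^2$, and the inductive symbol-type bounds on $\rho$- and $\omega$-derivatives --- match the paper's proof.
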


\begin{proof}
We use the fundamental system $\{\psi_0,\psi_1\}$ and set
\[ W(\lambda):=W(\psi_0(\cdot;\lambda),\psi_1(\cdot;\lambda))
=-W(\tilde \psi_1(\cdot;\lambda),\psi_1(\cdot;\lambda))=-1+2\lambda. \]
Motivated by the variation of constants formula, we consider the integral equation
\begin{align*}
 v_0(\rho;\lambda)=\psi_0(\rho;\lambda)&-\frac{\psi_0(\rho;\lambda)}{W(\lambda)}
 \int_0^\rho \psi_1(s;\lambda)\frac{V(s)}{1-s^2}v_0(s;\lambda)\d s \\
 &+\frac{\psi_1(\rho;\lambda)}{W(\lambda)}
 \int_0^\rho \psi_0(s;\lambda)\frac{V(s)}{1-s^2}v_0(s;\lambda)\d s.
\end{align*}
Since $\Re\lambda\leq \frac13$ and $\frac{1-\rho}{1+\rho}\leq 1$, we have
\begin{align*} |\psi_1(\rho;\lambda)|&=\left |\left (\frac{1-\rho}
{1+\rho}\right )^{\frac14+\frac{\lambda}{2}}(1+\rho)\right |=
 \left (\frac{1-\rho}
{1+\rho}\right )^{\frac14+\frac{\Re\lambda}{2}}(1+\rho)
\geq \left (\frac{1-\rho}
{1+\rho}\right )^{\frac{5}{12}}(1+\rho) \\
|\tilde \psi_1(\rho;\lambda)|&=\left |\left (\frac{1-\rho}
{1+\rho}\right )^{\frac34-\frac{\lambda}{2}}(1+\rho)\right |=
 \left (\frac{1-\rho}
{1+\rho}\right )^{\frac34-\frac{\Re\lambda}{2}}(1+\rho)
\leq \left (\frac{1-\rho}
{1+\rho}\right )^{\frac{7}{12}}(1+\rho)
\end{align*}
and thus,
\begin{align*}
 |\psi_0(\rho;\lambda)|&\geq |\psi_1(\rho;\lambda)|-|\tilde \psi_1(\rho;\lambda)| 
\geq (1+\rho)\left (\frac{1-\rho}{1+\rho}\right )^\frac{5}{12}\left [
 1-\left (\frac{1-\rho}{1+\rho} \right )^{\frac16}\right ]>0
 \end{align*}
for all $\rho\in (0,1)$.
Consequently, we may set
$h_0:=\frac{v_0}{\psi_0}$ which leads to the Volterra equation
\begin{equation}
\label{eq:h0}
 h_0(\rho;\lambda)=1+\int_0^\rho K(\rho,s;\lambda)h_0(s;\lambda)\d s 
 \end{equation}
with the kernel
\[ K(\rho,s;\lambda)=\frac{1}{W(\lambda)}\left [
\frac{\psi_1(\rho;\lambda)}{\psi_0(\rho;\lambda)}\psi_0(s;\lambda)^2
-\psi_0(s;\lambda)\psi_1(s;\lambda) \right ]\frac{V(s)}{1-s^2}.
\]
By Taylor expansion we find 
$\psi_0(\rho;\lambda)=(1-2\lambda)\rho[1+O(\rho\langle\lambda\rangle)]$ and thus,
$\psi_0(\rho;\lambda)^{-1}=(1-2\lambda)^{-1}\rho^{-1}[1+O(\rho\langle\lambda\rangle)]$
for all $\rho\in [0,\delta_0\langle\omega\rangle^{-1}]$ and $\lambda=\epsilon+\I\omega$,
provided $\delta_0>0$ is chosen sufficiently small.
Consequently, we obtain $|K(\rho,s;\lambda)|\lesssim s$ 
for all $0\leq s\leq \rho\leq \delta_0\langle\omega\rangle^{-1}$ 
and $\lambda=\epsilon+\I\omega$.
This yields
\[ \int_0^{\delta_0\langle\omega\rangle^{-1}}\sup_{\rho\in (s,\delta_0\langle
\omega\rangle^{-1})}|K(\rho,s;\lambda)|\d s\lesssim 
\langle\omega\rangle^{-2}. \]
Hence, we obtain the existence of a solution $h_0$ to Eq.~\eqref{eq:h0} with the
 bound
 \[ \left |h_0(\rho;\lambda)-1\right |\lesssim \rho^2 \]
 and all functions behave like symbols under differentiation with respect to
 $\rho$ and $\omega$. 
 %The identity $\overline{v_0(\cdot;\lambda)}=v_0(\cdot;\overline\lambda)$ 
 %follows from $\overline{\psi_0(\cdot;\lambda)}=\psi_0(\cdot;\overline\lambda)$ and the
 %same argument as in the proof of Lemma \ref{lem:v1}.
\end{proof}

In order to gain control over the solution $v_0$ near the endpoint $\rho=1$, we express
$v_0$ in terms of the fundamental system $\{v_1,\tilde v_1\}$.
Recall that $\Oo$ denotes an odd function of symbol type.

\begin{lemma}
\label{lem:v0v1}
The solution $v_0$ from Lemma \ref{lem:v0} has the representation
\[ v_0(\rho;\lambda)=[1+\Oo(\langle\omega\rangle^{-1})+\O(\langle\omega\rangle^{-2})]v_1(\rho;\lambda)
-[1+\Oo(\langle\omega\rangle^{-1})+\O(\langle\omega\rangle^{-2})]\tilde v_1(\rho;\lambda) \]
for all $\rho\in [0,1]$, $\epsilon\in [0,\frac13]$, $\omega\in \R$, and $\lambda=\epsilon+\I\omega$.
\end{lemma}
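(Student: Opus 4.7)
The plan is to write $v_0$ as a linear combination of the fundamental system $\{v_1, \tilde v_1\}$ provided by Lemma \ref{lem:v1} and extract the two connection coefficients from Wronskian identities, using the asymptotics of Lemma \ref{lem:v1} evaluated at $\rho = 0$.

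First I would establish that $\{v_1, \tilde v_1\}$ is indeed a fundamental system by computing $W(v_1, \tilde v_1)$. Since Eq.~\eqref{eq:specv} has no first-order derivative term, this Wronskian is constant in $\rho$. Writing $v_1 = \psi_1 h_1$, $\tilde v_1 = \tilde\psi_1 \tilde h_1$ and using $\psi_1 \tilde\psi_1 = 1 - \rho^2$, a short manipulation gives
\[ W(v_1, \tilde v_1) = h_1 \tilde h_1 \, W(\psi_1, \tilde\psi_1) + (1 - \rho^2)\bigl(h_1 \tilde h_1' - h_1' \tilde h_1\bigr). \]
Letting $\rho \to 1$, the bounds from Lemma \ref{lem:v1} yield $h_1, \tilde h_1 \to 1$ and the uniform boundedness of $h_1', \tilde h_1'$, so $W(v_1, \tilde v_1) = W(\psi_1, \tilde\psi_1) = -1 + 2\lambda$ exactly; this is bounded away from $0$ for $\lambda = \epsilon + \I\omega$, $\epsilon \in [0, \tfrac13]$.

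With the fundamental system in hand I would decompose $v_0 = \alpha(\lambda) v_1 + \beta(\lambda) \tilde v_1$, whose $\rho$-independent coefficients are given by the Cramer-type ratios $\alpha = W(v_0, \tilde v_1)/W(v_1, \tilde v_1)$, $\beta = -W(v_0, v_1)/W(v_1, \tilde v_1)$. I would then evaluate the three Wronskians at $\rho = 0$. Lemma \ref{lem:v0} expresses $v_0 = \psi_0 h_0$ with $h_0(\rho;\lambda) = 1 + \O(\rho^2 \langle\omega\rangle^0)$, which forces $h_0(0;\lambda) = 1$ and $h_0'(0;\lambda) = 0$; together with $\psi_0(0;\lambda) = 0$ and $\psi_0'(0;\lambda) = (\tfrac12 - \lambda) - (\lambda - \tfrac12) = 1 - 2\lambda$ this produces $v_0(0;\lambda) = 0$ and $v_0'(0;\lambda) = 1 - 2\lambda$. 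Consequently
\[ W(v_0, \tilde v_1)|_{\rho = 0} = -(1 - 2\lambda)\, \tilde v_1(0;\lambda), \qquad W(v_0, v_1)|_{\rho = 0} = -(1 - 2\lambda)\, v_1(0;\lambda), \]
and dividing by $W(v_1, \tilde v_1) = -(1 - 2\lambda)$ yields $\alpha(\lambda) = \tilde v_1(0;\lambda)$, $\beta(\lambda) = -v_1(0;\lambda)$.

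Finally, Lemma \ref{lem:v1} together with $\psi_1(0;\lambda) = \tilde\psi_1(0;\lambda) = 1$ gives
\[ v_1(0;\lambda) = 1 + \tfrac{a_1(0)}{1 - 2\lambda} + \O(\langle\omega\rangle^{-2}), \qquad \tilde v_1(0;\lambda) = 1 - \tfrac{a_1(0)}{1 - 2\lambda} + \O(\langle\omega\rangle^{-2}), \]
so it remains to identify $\frac{1}{1 - 2\lambda}$ as $\Oo(\langle\omega\rangle^{-1}) + \O(\langle\omega\rangle^{-2})$. Writing $\frac{1}{1 - 2\lambda} = \frac{1 - 2\epsilon}{(1 - 2\epsilon)^2 + 4\omega^2} + \I \frac{2\omega}{(1 - 2\epsilon)^2 + 4\omega^2}$ for $\lambda = \epsilon + \I\omega$, the real part is of symbol class $\langle\omega\rangle^{-2}$ while the imaginary part is odd in $\omega$ of symbol class $\langle\omega\rangle^{-1}$, both with the requisite $\omega$-derivative bounds. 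Since $a_1(0) = -\int_0^1 V(s)\,\d s$ is a constant, multiplication preserves the decomposition, and both $\alpha(\lambda)$ and $-\beta(\lambda)$ acquire the asserted form. The mildly delicate point in this program is the Wronskian identification in the first step, which requires $(1 - \rho^2) h_1'(\rho;\lambda) \to 0$ as $\rho \to 1$; this is immediate from the symbol-type bound $h_1'(\rho;\lambda) = \O(\langle\omega\rangle^{-1})$ built into Lemma \ref{lem:v1}.
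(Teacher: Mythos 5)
Your proposal is correct, and it follows the same basic strategy as the paper --- expand $v_0$ in the fundamental system $\{v_1,\tilde v_1\}$ and read off the connection coefficients from Wronskians, using $W(v_1,\tilde v_1)=-1+2\lambda$ (which you, like the paper, obtain by sending $\rho\to 1$, where the correction factors tend to $1$ and $(1-\rho^2)(h_1\tilde h_1'-h_1'\tilde h_1)\to 0$). The genuine difference is where the connection Wronskians are evaluated. The paper evaluates $W(v_0,\tilde v_1)$ and $W(v_0,v_1)$ at the matching radius $\rho=\delta_0\langle\omega\rangle^{-1}$, which forces it to combine the symbol-type error terms of Lemma \ref{lem:v0} and Lemma \ref{lem:v1} there, including the derivative contributions of the correction factors, and to use that $a_1(\delta_0\langle\omega\rangle^{-1})$ is even in $\omega$. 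You instead evaluate at $\rho=0$, where the Cauchy data of $v_0$ are exact: $v_0(0;\lambda)=0$ and $v_0'(0;\lambda)=1-2\lambda$ (this uses that the $v$-equation \eqref{eq:specv} is regular at $\rho=0$, so $v_0$ extends as a $C^1$ solution and $\psi_0 h_0'\to 0$ by the bounds of Lemma \ref{lem:v0}, and that $\rho=0$ lies in the interval of validity of both Lemmas \ref{lem:v0} and \ref{lem:v1}). This yields the exact closed forms $a(\lambda)=\tilde v_1(0;\lambda)$ and $b(\lambda)=-v_1(0;\lambda)$, so that no derivative information on $v_1,\tilde v_1$ is needed at all, and the asserted structure then follows from Lemma \ref{lem:v1} at $\rho=0$ together with the same decomposition $\tfrac{1}{1-2\lambda}=\Oo(\langle\omega\rangle^{-1})+\O(\langle\omega\rangle^{-2})$ that the paper uses. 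Both routes give the same conclusion; yours is slightly cleaner in that the coefficients are identified exactly rather than only up to the overlap-region error bookkeeping, while the paper's matching-point evaluation is the more robust template when no distinguished regular point with exact data is available.
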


\begin{proof}
Evaluation at $\rho=1$ yields
\begin{align*}
 W(v_1(\cdot;\lambda),\tilde v_1(\cdot;\lambda))&=
W(\psi_1(\cdot;\lambda),\tilde \psi_1(\cdot;\lambda))[1+\O((1-\rho)\langle\omega\rangle^{-1})] \\
&\quad +\psi_1(\rho;\lambda)\tilde \psi_1(\rho;\lambda)\O((1-\rho)^0\langle \omega\rangle^{-1}) \\
&=W(\psi_1(\cdot;\lambda),\tilde \psi_1(\cdot;\lambda)) \\
&=-1+2\lambda
 \end{align*}
 and thus, there exist $a(\lambda)$ and $b(\lambda)$ such that
 \[ v_0(\rho;\lambda)=a(\lambda)v_1(\rho;\lambda)+b(\lambda)\tilde v_1(\rho;\lambda). \]
The connection coefficients are given by 
 \begin{align*}
 a(\lambda)&=\frac{W(v_0(\cdot;\lambda),\tilde v_1(\cdot;\lambda))}
 {W(v_1(\cdot;\lambda),\tilde v_1(\cdot;\lambda))} \\
 b(\lambda)&=\frac{W(v_0(\cdot;\lambda),v_1(\cdot;\lambda))}
 {W(\tilde v_1(\cdot;\lambda),v_1(\cdot;\lambda))}
 \end{align*}
 and by evaluation at $\rho=\delta_0\langle\omega\rangle^{-1}$, where
 $\lambda=\epsilon+\I\omega$ and $\delta_0>0$ is from Lemma \ref{lem:v0}, we find
 \begin{align*}
 W(v_0(\cdot;\lambda),\tilde v_1(\cdot;\lambda))&=
 W(\psi_0(\cdot;\lambda),\tilde \psi_1(\cdot;\lambda)) \\
 &\quad \times [1+\O(\langle\omega\rangle^{-2})]
[1-\tfrac{1}{1-2\lambda}a_1(\delta_0\langle\omega\rangle^{-1})
+\O(\langle\omega\rangle^{-2})] \\
 &\quad +\psi_0(\delta_0\langle\omega\rangle^{-1};\lambda)
 \tilde\psi_1(\delta_0\langle\omega\rangle^{-1};\lambda)\O(\langle\omega\rangle^{-1}) \\
 &=(-1+2\lambda)[1+\Oo(\langle\omega\rangle^{-1})+\O(\langle\omega\rangle^{-2})]
 \end{align*}
 since $a_1(\delta_0 \langle\omega\rangle^{-1})=\O(\langle\omega\rangle^0)$ is an even function
 of $\omega$ and
 \[ \frac{1}{1-2\lambda}=\frac{1-2\epsilon+2\I\omega}{(1-2\epsilon)^2+4\omega^2}=\Oo(\langle\omega\rangle^{-1})
 +\O(\langle\omega\rangle^{-2}). \]
Analogously,
 \[ W(v_0(\cdot;\lambda),v_1(\cdot;\lambda))=(-1+2\lambda)
 [1+\Oo(\langle\omega\rangle^{-1})+\O(\langle\omega\rangle^{-2})]. \]
\end{proof}

\subsection{The Green function}
Now we return to the case $V(\rho)=-\frac{15}{4}$.
In view of the transformation \eqref{eq:utov} we set
\begin{align*}
u_0(\rho;\lambda)&:=\rho^{-1}(1-\rho^2)^{-\frac14-\frac{\lambda}{2}}
v_0(\rho;\lambda) \\
u_1(\rho;\lambda)&:=\rho^{-1}(1-\rho^2)^{-\frac14-\frac{\lambda}{2}}
v_1(\rho;\lambda).
\end{align*}
By construction, for $j\in \{0,1\}$,
$u_j(\cdot;\lambda)$ is a solution of 
Eq.~\eqref{eq:specinh} with $F_\lambda=0$.
Note that by standard ODE regularity theory we have $u_j(\cdot;\lambda)\in C^\infty(0,1)$.
Near $\rho=0$ we have the asymptotic behavior
\begin{align*} 
\lim_{\rho\to 0+}u_0(\rho;\lambda)&=\lim_{\rho\to 0+} \frac{v_0(\rho;\lambda)}{\rho}
=\lim_{\rho\to0+}\frac{\psi_0(\rho;\lambda)}{\rho}=1-2\lambda 
\end{align*}
and thus, $u_0(\cdot;\lambda)\in C[0,1) \cap C^\infty(0,1)$.
Furthermore, we note the bound 
$|u_0'(\rho;\lambda)|\lesssim \rho^{-1}\langle\lambda\rangle$ for $\rho\in [0,\delta_0\langle\omega\rangle^{-1}]$,
$\lambda=\epsilon+\I\omega$ and $\delta_0>0$ sufficiently small (see Lemma \ref{lem:v0}).
Thus, we also have $u_0(\cdot;\lambda)\in H^1(\B^3_{1-\delta})$ for any $\delta\in (0,1)$.
Near $\rho=1$, on the other hand, we have
\begin{align*} \lim_{\rho\to 1-}u_1(\rho;\lambda)&=\lim_{\rho\to 1-}[(1-\rho^2)^{-\frac14
-\frac{\lambda}{2}}v_1(\rho;\lambda)] \\
&=\lim_{\rho\to 1-}
[(1-\rho^2)^{-\frac14
-\frac{\lambda}{2}}\psi_1(\rho;\lambda)] \\
&=2^{\frac12-\lambda} \\
\lim_{\rho\to 1-}u_1'(\rho;\lambda)&=O(\langle \lambda\rangle).
\end{align*}
This shows $u_1(\cdot;\lambda)\in C^1(0,1]\cap C^\infty(0,1)$ and in particular, we
infer $u_1(\cdot;\lambda)\in H^1(\B^3\backslash \{0\})$.
For the construction of the Green function we also need to know the Wronskian
of $u_0$ and $u_1$.

\begin{lemma}
\label{lem:W}
We have
\[ W(u_0(\cdot;\lambda),u_1(\cdot;\lambda))(\rho)=
(-1+2\lambda)w_0(\lambda) \rho^{-2}(1-\rho^2)^{-\frac12-\lambda} \]
with
\[
w_0(\epsilon+\I\omega)=1+\Oo(\langle\omega\rangle^{-1})+\O(\langle\omega\rangle^{-2}) \]
for all $\rho\in (0,1)$, $\epsilon\in [0,\frac13]$, and $\omega\in \R$.
Furthermore, 
$|w_0(\lambda)|\gtrsim 1$ for all $\lambda \in \C$ with $\Re\lambda\in [0,\frac13]$.
\end{lemma}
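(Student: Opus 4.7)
My strategy is to reduce the Wronskian of $u_0,u_1$ to that of $v_0,v_1$, use Abel's identity together with the connection formula from Lemma \ref{lem:v0v1}, and finish the lower bound by asymptotics plus a compactness/spectral argument.

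The substitution $v(\rho)=\rho(1-\rho^2)^{1/4+\lambda/2}u(\rho)$ intertwines the free ($F_\lambda=0$) version of Eq.~\eqref{eq:specinh} with Eq.~\eqref{eq:specv}. Setting $\alpha(\rho):=\rho(1-\rho^2)^{1/4+\lambda/2}$, a one-line product-rule calculation yields
\[
W(v_0,v_1)=\alpha^2\,W(u_0,u_1)=\rho^2(1-\rho^2)^{1/2+\lambda}\,W(u_0,u_1).
\]
Because Eq.~\eqref{eq:specv} has no first-order derivative term, Abel's identity makes $W(v_0,v_1)$ a constant $C(\lambda)$ in $\rho$, so $W(u_0,u_1)(\rho)=C(\lambda)\rho^{-2}(1-\rho^2)^{-1/2-\lambda}$, which already has the claimed structure.

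To identify $C(\lambda)$, I invoke Lemma \ref{lem:v0v1} to write $v_0=a(\lambda)v_1+b(\lambda)\tilde v_1$ with $b(\lambda)=-[1+\Oo(\langle\omega\rangle^{-1})+\O(\langle\omega\rangle^{-2})]$, so $C(\lambda)=-b(\lambda)W(v_1,\tilde v_1)$. For the remaining constant $W(v_1,\tilde v_1)$ I factor $v_1=\psi_1 f_1$ and $\tilde v_1=\tilde\psi_1\tilde f_1$ using Lemma \ref{lem:v1}; the product rule gives
\[
W(v_1,\tilde v_1)=f_1\tilde f_1\,W(\psi_1,\tilde\psi_1)+\psi_1\tilde\psi_1\,W(f_1,\tilde f_1).
\]
Evaluating as $\rho\to 1$, the factor $\psi_1\tilde\psi_1=1-\rho^2$ vanishes while $W(f_1,\tilde f_1)$ stays bounded by the symbol-type estimates, and $a_1(1)=0$ forces $f_1(1)=\tilde f_1(1)=1$. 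A short direct computation from the exponents of $\psi_1,\tilde\psi_1$ gives $W(\psi_1,\tilde\psi_1)=-1+2\lambda$, hence $W(v_1,\tilde v_1)=-1+2\lambda$ identically, and $C(\lambda)=(-1+2\lambda)w_0(\lambda)$ with $w_0(\lambda)=1+\Oo(\langle\omega\rangle^{-1})+\O(\langle\omega\rangle^{-2})$.

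For the uniform bound $|w_0(\lambda)|\gtrsim 1$ on the strip $\Re\lambda\in[0,\tfrac13]$, the expansion immediately gives $|w_0(\lambda)|\geq\tfrac12$ once $|\omega|\geq\omega_0$ for some fixed $\omega_0$. On the remaining compact region $\{\Re\lambda\in[0,\tfrac13],|\omega|\leq\omega_0\}$, continuity reduces matters to excluding zeros. A zero of $w_0$ at $\lambda_*$ would force $u_0(\cdot;\lambda_*)$ and $u_1(\cdot;\lambda_*)$ to be linearly dependent, and the common solution would lie in $\mc H$ (regular at $\rho=0$, $H^1$ near $\rho=1$), producing an $\mc H$-eigenfunction of $\mb L$ with eigenvalue $\lambda_*$. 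Proposition \ref{prop:SG} rules out any such $\lambda_*$ with $\Re\lambda_*>0$ except $\lambda_*=1$, which is not in the strip. The main technical obstacle is the case $\Re\lambda_*=0$: the abstract semigroup bound only gives analyticity of the reduced resolvent on $\{\Re z>0\}$, so I plan to extend the perturbative ODE construction slightly into $\Re\lambda<0$ (which the estimates of Lemmas \ref{lem:v1}, \ref{lem:v0}, \ref{lem:v0v1} accommodate), yielding continuity of $w_0$ on an open neighborhood of the closed strip and thereby reducing the imaginary-axis case to a standard explicit exclusion of imaginary eigenvalues of $\mb L$ in the spirit of \cite{DonSch14}.
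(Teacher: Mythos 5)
The first half of your argument is fine and is essentially the paper's: the substitution \eqref{eq:utov} gives $W(v_0,v_1)=\rho^2(1-\rho^2)^{\frac12+\lambda}W(u_0,u_1)$, Abel's identity makes this $\rho$-independent, and the value $(-1+2\lambda)[1+\Oo(\langle\omega\rangle^{-1})+\O(\langle\omega\rangle^{-2})]$ is exactly what is recorded in (the proof of) Lemma \ref{lem:v0v1}; your evaluation at $\rho\to1$ via $a_1(1)=0$ and $\psi_1\tilde\psi_1=1-\rho^2$ is the same computation.

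The genuine gap is in the lower bound $|w_0(\lambda)|\gtrsim 1$ on the closed strip $\Re\lambda\in[0,\frac13]$. Your compactness reduction needs non-vanishing of $w_0$ at every point of the strip, and your proposed exclusion mechanism --- a zero of $w_0$ produces an $\mc H$-eigenfunction of $\mb L$, contradicted by Proposition \ref{prop:SG} --- only works for $\Re\lambda_*>0$, since Proposition \ref{prop:SG} merely asserts $\sigma(\mb L)\setminus\{1\}\subset\{\Re z\leq 0\}$ and says nothing about the imaginary axis. You acknowledge this and defer it to an extension of the Volterra construction to $\Re\lambda<0$ plus ``a standard explicit exclusion of imaginary eigenvalues in the spirit of \cite{DonSch14}'', but that exclusion is precisely the nontrivial content of the lemma (it is what later allows the contour to be pushed to $\Re\lambda=0$), and it is not supplied by any abstract semigroup fact: bounded or subexponential semigroups are perfectly compatible with purely imaginary point spectrum. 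The paper closes exactly this point by a different, explicit route: with $V=-\frac{15}{4}$, the transformation $v(\rho)=(1-\rho^2)^{\frac14+\frac\lambda2}w(\rho^2)$ turns Eq.~\eqref{eq:specv} into the hypergeometric equation \eqref{eq:hypgeo} with $a=\frac\lambda2-1$, $b=\frac\lambda2+1$, $c=\frac12$; identifying $v_0,v_1$ with the solutions $w_0(\cdot;\lambda)$, $w_1(\cdot;\lambda)$ and using the connection formula, the Wronskian vanishes only if $\frac12-\frac\lambda2\in\N_0$ or $-\frac32-\frac\lambda2\in\N_0$, neither of which occurs for $\Re\lambda\in[0,\frac13]$ --- in particular not on the imaginary axis. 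Without this (or an equivalent explicit spectral/ODE argument), your proof of the final assertion is incomplete; as written it is also borderline circular, since the absence of imaginary eigenvalues for this operator is in the literature established by exactly such an explicit analysis.
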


\begin{proof}
By definition, we have
\[ W(u_0(\cdot;\lambda),u_1(\cdot;\lambda))(\rho)=
\rho^{-2}(1-\rho^2)^{-\frac12-\lambda}W(v_0(\cdot;\lambda),v_1(\cdot;\lambda)) \]
and from the proof of Lemma \ref{lem:v0v1} we recall
\begin{equation}
\label{eq:Wlg} W(v_0(\cdot;\lambda),v_1(\cdot;\lambda))=(-1+2\lambda)
[1+\Oo(\langle\omega\rangle^{-1})+\O(\langle\omega\rangle^{-2})].
\end{equation}
Consequently, we obtain the stated form of the Wronskian.
Upon setting 
\[ v(\rho)=(1-\rho^2)^{\frac14+\frac{\lambda}{2}}w(\rho^2), \] 
we see that Eq.~\eqref{eq:specv} with $V(\rho)=-\frac{15}{4}$ is equivalent to
the hypergeometric differential equation
\begin{equation}
\label{eq:hypgeo}
 z(1-z)w''(z)+[c-(a+b+1)z]w'(z)-ab w(z)=0 
 \end{equation}
where $z=\rho^2$, $a=\frac{\lambda}{2}-1$, $b=\frac{\lambda}{2}+1$, and $c=\frac12$.
Eq.~\eqref{eq:hypgeo} has a fundamental system 
given by
\begin{align*}
 w_0(z;\lambda)&=z^\frac12 {}_2F_1(a-c+1,b-c+1,2-c;z) \\
 \tilde w_0(z;\lambda)&={}_2F_1(a,b,c;z),
\end{align*}
where ${}_2F_1$ denotes the standard hypergeometric function, see \cite{DLMF}. Another fundamental system is given by
\begin{align*}
w_1(z;\lambda)&={}_2F_1(a,b,a+b+1-c; 1-z) \\
\tilde w_1(z;\lambda)&=(1-z)^{c-a-b}{}_2F_1(c-a,c-b,c-a-b+1; 1-z).
\end{align*}
From $\lim_{\rho\to 0+}v_0(\rho;\lambda)=0$ and the asymptotic form
of $v_1(\rho;\lambda)$ as $\rho\to 1-$, we see that
\[ v_j(\rho;\lambda)=(1-\rho^2)^{\frac14+\frac{\lambda}{2}}w_j(\rho^2;\lambda),\qquad j\in \{0,1\}. \]
Consequently, $W(v_0(\cdot;\lambda),v_1(\cdot;\lambda))=0$ if and only if
$W(w_0(\cdot;\lambda),w_1(\cdot;\lambda))=0$.
The connection formula \cite{DLMF}
\[ w_1(z;\lambda)=\frac{\Gamma(1-c)\Gamma(a+b-c+1)}{\Gamma(a-c+1)\Gamma(b-c+1)}\tilde w_0(z;\lambda)
+\frac{\Gamma(c-1)\Gamma(a+b-c+1)}{\Gamma(a)\Gamma(b)}w_0(z;\lambda) \]
shows that $W(w_0(\cdot;\lambda),w_1(\cdot;\lambda))=0$ if and only if
\begin{align*} -a+c-1&=-\tfrac{\lambda}{2}+\tfrac12 \in \N_0 \\
& \mbox{ or} \\
-b+c-1&=-\tfrac{\lambda}{2}-\tfrac32
 \in \N_0. 
 \end{align*}
 Neither of these conditions is satisfied if $\Re\lambda\in [0,\frac13]$ and thus,
 in view of Eq.~\eqref{eq:Wlg}, we infer
 \[ |w_0(\lambda)|=\left |\frac{W(v_0(\cdot;\lambda),v_1(\cdot;\lambda))}{1-2\lambda}\right |
 \gtrsim 1 \]
 for all $\lambda$ with $\Re\lambda\in [0,\frac13]$.
\end{proof}

\begin{corollary}
\label{cor:W}
For the function $w_0$ from Lemma \ref{lem:W} we have the representation
\[ \tfrac{1}{w_0(\lambda)}=1+\Oo(\langle\omega\rangle^{-1})+\O(\langle\omega\rangle^{-2}) \]
for all $\epsilon\in [0,\frac13]$, $\omega\in \R$, and $\lambda=\epsilon+\I\omega$.
\end{corollary}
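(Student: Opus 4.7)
The plan is to invoke the expansion of $w_0$ provided by Lemma \ref{lem:W} together with the lower bound $|w_0(\lambda)| \gtrsim 1$, and then invert via a short Neumann-style identity. No new analysis of the fundamental system is required; everything is bookkeeping in the symbol calculus introduced in Definition \ref{def:O} and the remark following it.

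First I would write $w_0(\lambda) = 1 + a(\omega) + b(\omega)$, where $a(\omega) = \Oo(\langle\omega\rangle^{-1})$ is odd of symbol order $-1$ and $b(\omega) = \O(\langle\omega\rangle^{-2})$ is a symbol of order $-2$ (both possibly depending mildly on $\epsilon$, which I suppress as in the paper). The identity I would use is the one-step Neumann expansion
\[
\frac{1}{w_0(\lambda)} \;=\; \frac{1}{1 + a + b} \;=\; 1 - a - b + \frac{(a+b)^2}{w_0(\lambda)}.
\]

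Next I would handle the last term. Since symbol orders add under multiplication (the remark after Definition \ref{def:O}), we have $a^2 = \O(\langle\omega\rangle^{-2})$, $ab = \O(\langle\omega\rangle^{-3})$, and $b^2 = \O(\langle\omega\rangle^{-4})$, so $(a+b)^2 = \O(\langle\omega\rangle^{-2})$. To divide by $w_0$ I need to know that $1/w_0$ is itself of symbol order $0$ on the strip $\Re\lambda \in [0,\tfrac13]$. This follows from the bound $|w_0| \gtrsim 1$ of Lemma \ref{lem:W} by a straightforward induction on the number of $\omega$-derivatives: writing $\partial_\omega (1/w_0) = -(\partial_\omega w_0)/w_0^2$ and iterating via Faà di Bruno, each $n$-th derivative of $1/w_0$ is a finite sum of products of powers of $1/w_0$ with derivatives of $w_0$, and every factor has the correct symbol order. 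Consequently $(a+b)^2/w_0(\lambda) = \O(\langle\omega\rangle^{-2})$.

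Putting the pieces together, the expansion becomes
\[
\frac{1}{w_0(\lambda)} \;=\; 1 \;-\; a(\omega) \;+\; \Bigl(-b(\omega) + \frac{(a+b)^2}{w_0(\lambda)}\Bigr),
\]
where the second summand is $\Oo(\langle\omega\rangle^{-1})$ (odd of symbol order $-1$) and the parenthesis is $\O(\langle\omega\rangle^{-2})$. This is exactly the claimed form. The only step that is not bookkeeping is the stability of the symbol class under the reciprocal operation, and even that is standard given the quantitative lower bound $|w_0|\gtrsim 1$ from Lemma \ref{lem:W}; so I expect no substantive obstacle.
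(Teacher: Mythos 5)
Your proof is correct, and it takes a slightly different (though equally elementary) route than the paper. The paper inverts $w_0$ by writing $\tfrac{1}{w_0(\lambda)}=\tfrac{\overline{w_0(\lambda)}}{|w_0(\lambda)|^2}$: conjugation preserves the structure $1+\Oo(\langle\omega\rangle^{-1})+\O(\langle\omega\rangle^{-2})$ (the odd part stays odd), the denominator is real with $|w_0|\gtrsim 1$ by Lemma \ref{lem:W}, and the conclusion follows by multiplying out in the symbol calculus. You instead use the exact identity $\tfrac{1}{1+x}=1-x+\tfrac{x^2}{1+x}$ with $x=a+b$, which hands you the odd order $-1$ part explicitly as $-a$, and you reduce everything to showing that $1/w_0$ is itself an order-zero symbol in $\omega$; your inductive argument via $\partial_\omega(1/w_0)=-(\partial_\omega w_0)/w_0^2$ is sound, since $w_0-1$ is a symbol of order $-1$ and $|w_0|\gtrsim 1$ uniformly on the strip $\Re\lambda\in[0,\tfrac13]$, so in fact $\partial_\omega^k(1/w_0)=\O(\langle\omega\rangle^{-k-1})$ for $k\geq 1$, which is more than enough to place $(a+b)^2/w_0$ in $\O(\langle\omega\rangle^{-2})$. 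What your version buys is that the bookkeeping of $\omega$-derivatives and of the oddness is made completely explicit: you never have to track which order $-1$ contribution of $1/|w_0|^2$ is odd, a point the paper's one-line multiplication leaves implicit. What it costs is the extra (routine) induction establishing the symbol bounds for the reciprocal; the paper's conjugation trick avoids that by keeping the denominator real. Both arguments rest on exactly the same inputs from Lemma \ref{lem:W}, namely the expansion of $w_0$ and the lower bound $|w_0(\lambda)|\gtrsim 1$, and the uniformity in $\epsilon\in[0,\tfrac13]$ is inherited in either case, so there is no gap in your proposal.
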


\begin{proof}
With $\lambda=\epsilon+\I\omega$ we have
\begin{align*} \frac{1}{w_0(\lambda)}&=\frac{\overline{w_0(\lambda)}}{|w_0(\lambda)|^2}
=[1+\Oo(\langle\omega\rangle^{-1})+\O(\langle\omega\rangle^{-2})][1+\O(\langle\omega\rangle^{-1})] \\
&=1+\Oo(\langle\omega\rangle^{-1})+\O(\langle\omega\rangle^{-2}).
\end{align*}
\end{proof}

Lemma \ref{lem:W} implies that
$\{u_0(\cdot;\lambda),u_1(\cdot;\lambda)\}$ is a fundamental
system for the homogeneous version of Eq.~\eqref{eq:specinh}.
Thus, by the variation of constants formula, the general solution of Eq.~\eqref{eq:specinh}
is given by
\begin{align*} u(\rho;\lambda)=&c_0(\lambda)u_0(\rho;\lambda)+c_1(\lambda)u_1(\rho;\lambda) \\
&-\int_0^\rho \frac{u_0(s;\lambda)u_1(\rho;\lambda)}{W(u_0(\cdot;\lambda),
u_1(\cdot;\lambda))(s)}\frac{F_\lambda(s)}{1-s^2}\d s \\
&-\int_\rho^1 \frac{u_0(\rho;\lambda)u_1(s;\lambda)}{W(u_0(\cdot;\lambda),
u_1(\cdot;\lambda))(s)}\frac{F_\lambda(s)}{1-s^2}\d s 
\end{align*}
where $c_j(\lambda)\in \C$, $j\in \{0,1\}$, can be chosen freely.
Since $u_0(\cdot;\lambda)\notin H^1(\B^3)$ and $u_1(\cdot;\lambda)\notin H^1(\B^3)$,
there is a unique solution 
\begin{align*} u(\rho;\lambda)=&-\int_0^\rho \frac{u_0(s;\lambda)u_1(\rho;\lambda)}{W(u_0(\cdot;\lambda),
u_1(\cdot;\lambda))(s)}\frac{F_\lambda(s)}{1-s^2}\d s \\
&-\int_\rho^1 \frac{u_0(\rho;\lambda)u_1(s;\lambda)}{W(u_0(\cdot;\lambda),
u_1(\cdot;\lambda))(s)}\frac{F_\lambda(s)}{1-s^2}\d s 
\end{align*}
which belongs to $H^1(\B^3)$. 
As a consequence, the Green function for Eq.~\eqref{eq:specinh} is given by
\begin{equation} 
\label{eq:Green}
G(\rho,s;\lambda):=\frac{s^2(1-s^2)^{-\frac12+\lambda}}{(1-2\lambda)w_0(\lambda)}
\left \{ 
\begin{array}{l}
u_0(\rho;\lambda)u_1(s;\lambda)\mbox{ if }\rho\leq s \\
u_1(\rho;\lambda)u_0(s;\lambda)\mbox{ if }\rho \geq s
\end{array} \right . 
\end{equation}
and with this $G$, the representation formula \eqref{eq:SG} 
holds.

\subsection{Decomposition}
We set
\begin{align}
\label{def:phi1}
\phi_1(\rho;\lambda)&:=\rho^{-1}(1-\rho^2)^{-\frac14-\frac{\lambda}{2}}
\psi_1(\rho;\lambda)=\rho^{-1}(1+\rho)^{\frac12-\lambda} \nonumber \\
\tilde\phi_1(\rho;\lambda)&:=\rho^{-1}(1-\rho^2)^{-\frac14-\frac{\lambda}{2}}
\tilde\psi_1(\rho;\lambda)=\rho^{-1}(1-\rho)^{\frac12-\lambda}
\end{align}
as well as
\begin{align} 
\label{def:phi0}
\phi_0(\rho;\lambda):=\phi_1(\rho;\lambda)-\tilde\phi_1(\rho;\lambda)=
\rho^{-1}\left [ (1+\rho)^{\frac12-\lambda}-(1-\rho)^{\frac12-\lambda} \right ] .
\end{align}
In view of the transformation \eqref{eq:utov}, 
$\{\phi_0(\cdot;\lambda),\phi_1(\cdot;\lambda)\}$ is a fundamental system for
Eq.~\eqref{eq:specV} with $V=F_\lambda=0$ and
\begin{equation*} 
G_0(\rho,s;\lambda):=\frac{s^2(1-s^2)^{-\frac12+\lambda}}{1-2\lambda}
\left \{ 
\begin{array}{l}
\phi_0(\rho;\lambda)\phi_1(s;\lambda)\mbox{ if }\rho\leq s \\
\phi_1(\rho;\lambda)\phi_0(s;\lambda)\mbox{ if }\rho \geq s
\end{array} \right . 
\end{equation*}
is the corresponding Green function for the free wave equation.
In the following we use a smooth cut-off $\chi: \R\to [0,1]$ that satisfies
$\chi(x)=1$ for $|x|\leq \frac{\delta_0}{2}$ and $\chi(x)=0$ for $|x|\geq \delta_0$,
where $\delta_0>0$ is from Lemma \ref{lem:v0}.

\begin{lemma}
\label{lem:decomp}
We have the decomposition
\[ G(\rho,s;\lambda)=G_0(\rho,s;\lambda)+\sum_{n=1}^6 G_n(\rho,s;\lambda) \]
where 
\begin{align*}
G_1(\rho,s;\lambda)&=1_{\R_+}(s-\rho)\chi(\rho\langle\omega\rangle)s^2(1-s^2)^{-\frac12+\lambda}
\frac{\phi_0(\rho;\lambda)\phi_1(s;\lambda)\gamma_1(\rho,s;\lambda)}{1-2\lambda} \\
G_2(\rho,s;\lambda)&=1_{\R_+}(s-\rho)[1-\chi(\rho\langle\omega\rangle)]s^2(1-s^2)^{-\frac12+\lambda}
\frac{\phi_1(\rho;\lambda)\phi_1(s;\lambda)\gamma_2(\rho,s;\lambda)}{1-2\lambda} \\
G_3(\rho,s;\lambda)&=1_{\R_+}(s-\rho)[1-\chi(\rho\langle\omega\rangle)]s^2(1-s^2)^{-\frac12+\lambda}
\frac{\tilde \phi_1(\rho;\lambda)\phi_1(s;\lambda)\gamma_3(\rho,s;\lambda)}{1-2\lambda} \\
G_4(\rho,s;\lambda)&=1_{\R_+}(\rho-s)\chi(s\langle\omega\rangle)s^2(1-s^2)^{-\frac12+\lambda}
\frac{\phi_1(\rho;\lambda)\phi_0(s;\lambda)\gamma_4(\rho,s;\lambda)}{1-2\lambda} \\
G_5(\rho,s;\lambda)&=1_{\R_+}(\rho-s)[1-\chi(s\langle\omega\rangle)]s^2(1-s^2)^{-\frac12+\lambda}
\frac{\phi_1(\rho;\lambda)\phi_1(s;\lambda)\gamma_5(\rho,s;\lambda)}{1-2\lambda} \\
G_6(\rho,s;\lambda)&=1_{\R_+}(\rho-s)[1-\chi(s\langle\omega\rangle)]s^2(1-s^2)^{-\frac12+\lambda}
\frac{\phi_1(\rho;\lambda)\tilde \phi_1(s;\lambda)\gamma_6(\rho,s;\lambda)}{1-2\lambda}
\end{align*}
and 
\begin{align*}
\gamma_n(\rho,s;\lambda)=\O(\rho^0 s^0)\Oo(\langle\omega\rangle^{-1})+\O((1-\rho)^0s^0\langle\omega\rangle^{-2})
+\O(\rho^0(1-s)\langle\omega\rangle^{-2})
\end{align*}
for all $\rho,s\in (0,1)$, $\epsilon\in [0,\frac13]$, $\omega\in \R$, $n\in \{1,2,\dots,6\}$, and $\lambda=\epsilon+\I\omega$.
\end{lemma}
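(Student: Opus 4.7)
My plan is to read off the decomposition directly from formula \eqref{eq:Green} by substituting the asymptotic expansions of $u_0(\rho;\lambda)$, $u_1(s;\lambda)$ and $1/w_0(\lambda)$ furnished by Lemmas \ref{lem:v1}, \ref{lem:v0}, \ref{lem:v0v1} and Corollary \ref{cor:W}, and then subtracting the free expression defining $G_0$. The six pieces arise from two successive splittings: first, the case distinction $\rho\lessgtr s$ built into \eqref{eq:Green} produces the two branches; second, within each branch the variable carrying $u_0$ is localized via $1=\chi(\cdot\langle\omega\rangle)+[1-\chi(\cdot\langle\omega\rangle)]$. The cutoff width is chosen (using the $\delta_0$ from Lemma \ref{lem:v0}) so that Lemma \ref{lem:v0} applies throughout the $\chi$-supported piece.

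Consider first the branch $\rho\leq s$. On the support of $\chi(\rho\langle\omega\rangle)$ one has $\rho\leq\delta_0\langle\omega\rangle^{-1}$, so Lemma \ref{lem:v0} yields $u_0(\rho;\lambda)=\phi_0(\rho;\lambda)[1+O(\rho^2)]$. Combined with $u_1(s;\lambda)=\phi_1(s;\lambda)\bigl[1+\tfrac{a_1(s)}{1-2\lambda}+O((1-s)\langle\omega\rangle^{-2})\bigr]$ from Lemma \ref{lem:v1} and $1/w_0(\lambda)=1+\Oo(\langle\omega\rangle^{-1})+O(\langle\omega\rangle^{-2})$ from Corollary \ref{cor:W}, subtraction of the corresponding part of $G_0$ extracts $G_1$. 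On the complementary support of $1-\chi(\rho\langle\omega\rangle)$, Lemma \ref{lem:v0v1} provides the connection formula $u_0(\rho;\lambda)=\alpha(\lambda)u_1(\rho;\lambda)-\beta(\lambda)\tilde u_1(\rho;\lambda)$, where one sets $\tilde u_1(\rho;\lambda):=\rho^{-1}(1-\rho^2)^{-1/4-\lambda/2}\tilde v_1(\rho;\lambda)$ and $\alpha(\lambda),\beta(\lambda)=1+\Oo(\langle\omega\rangle^{-1})+O(\langle\omega\rangle^{-2})$. Expanding $u_1,\tilde u_1$ via Lemma \ref{lem:v1} and writing $\phi_0=\phi_1-\tilde\phi_1$ in the subtracted $G_0$ distributes the remainder into exactly two pieces, with coefficients $\phi_1(\rho;\lambda)\phi_1(s;\lambda)$ and $\tilde\phi_1(\rho;\lambda)\phi_1(s;\lambda)$, i.e., $G_2$ and $G_3$. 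The branch $\rho\geq s$ is handled identically, with the cutoff now on $s\langle\omega\rangle$ and the decomposition applied to $u_0(s;\lambda)$, producing $G_4,G_5,G_6$.

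By construction each $\gamma_n$ is (up to subtracting $1$) a finite product of factors of three kinds: $\lambda$-only factors $1+\Oo(\langle\omega\rangle^{-1})+O(\langle\omega\rangle^{-2})$ coming from $\alpha,\beta,1/w_0$; factors $\tfrac{a_1(\rho)}{1-2\lambda}$ and $\tfrac{a_1(s)}{1-2\lambda}$ which, using $a_1\in C^\infty([0,1])$ together with $\tfrac{1}{1-2\lambda}=\Oo(\langle\omega\rangle^{-1})+O(\langle\omega\rangle^{-2})$, are of shape $O(\rho^0)\Oo(\langle\omega\rangle^{-1})+O((1-\rho)^0\langle\omega\rangle^{-2})$ (and analogously in $s$); and genuine remainders $O((1-\rho)\langle\omega\rangle^{-2})$ and $O((1-s)\langle\omega\rangle^{-2})$ from Lemma \ref{lem:v1}, together with the $O(\rho^2)$ (respectively $O(s^2)$) contribution from Lemma \ref{lem:v0} which becomes $O(\langle\omega\rangle^{-2})$ on the respective $\chi$-support. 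Since the $\O/\Oo$ symbol classes are closed under products and sums, expanding and collecting puts $\gamma_n$ in the template $O(\rho^0 s^0)\Oo(\langle\omega\rangle^{-1})+O((1-\rho)^0 s^0\langle\omega\rangle^{-2})+O(\rho^0(1-s)\langle\omega\rangle^{-2})$. The one technical point that demands care is to preserve the \emph{oddness} in $\omega$ of the $\langle\omega\rangle^{-1}$ contributions as they propagate through these products (this being precisely the feature that will later permit the Laplace contour to be shifted to the imaginary axis) and to keep the vanishing $(1-s)$ factor from Lemma \ref{lem:v1} in its own error slot rather than absorbing it into the generic $\langle\omega\rangle^{-2}$ remainder; beyond this bookkeeping the argument is a routine algebraic exercise.
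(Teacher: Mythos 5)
Your proposal is correct and follows essentially the same route as the paper: split $u_0$ (in the variable $\rho$ for $\rho\leq s$, in $s$ for $\rho\geq s$) via $1=\chi(\cdot\langle\omega\rangle)+[1-\chi(\cdot\langle\omega\rangle)]$, use Lemma \ref{lem:v0} on the cutoff support and Lemmas \ref{lem:v0v1}, \ref{lem:v1} on the complement, multiply in the expansions of $u_1(s;\lambda)$ and $1/w_0(\lambda)$ from Lemma \ref{lem:v1} and Corollary \ref{cor:W}, and collect the remainders in the symbol classes after peeling off $G_0$. Your bookkeeping of the odd $\Oo(\langle\omega\rangle^{-1})$ contributions and of the $(1-s)$ factor matches the paper's $\gamma_n$ template, so nothing essential is missing.
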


\begin{proof}
From Lemmas \ref{lem:v0v1}, \ref{lem:v0}, and \ref{lem:v1} we infer
\begin{align*} 
u_0(\rho;\lambda)&=\chi(\rho\langle\omega\rangle)u_0(\rho;\lambda)
+[1-\chi(\rho\langle\omega\rangle)]u_0(\rho;\lambda) \\
&=\chi(\rho\langle\omega\rangle)\phi_0(\rho;\lambda)[1+\O(\rho^0\langle\omega\rangle^{-2})] \\
&\quad +[1-\chi(\rho\langle\omega\rangle)]\phi_1(\rho;\lambda)[1+\O(\rho^0)\Oo(\langle\omega\rangle^{-1})
+\O(\langle\omega\rangle^{-2})+\O((1-\rho)\langle\omega\rangle^{-2})] \\
&\quad -[1-\chi(\rho\langle\omega\rangle)]\tilde\phi_1(\rho;\lambda)
[1+\O(\rho^0)\Oo(\langle\omega\rangle^{-1})
+\O(\langle\omega\rangle^{-2})+\O((1-\rho)\langle\omega\rangle^{-2})] \\
&=\phi_0(\rho;\lambda)+\chi(\rho\langle\omega\rangle)\phi_0(\rho;\lambda)\O(\rho^0\langle\omega\rangle^{-2}) \\
&\quad +[1-\chi(\rho\langle\omega\rangle)]\phi_1(\rho;\lambda)[\O(\rho^0)\Oo(\langle\omega\rangle^{-1})
+\O(\langle\omega\rangle^{-2})+\O((1-\rho)\langle\omega\rangle^{-2})] \\
&\quad -[1-\chi(\rho\langle\omega\rangle)]\tilde\phi_1(\rho;\lambda)
[\O(\rho^0)\Oo(\langle\omega\rangle^{-1})
+\O(\langle\omega\rangle^{-2})+\O((1-\rho)\langle\omega\rangle^{-2})] 
\end{align*}
and the stated decomposition follows from Lemma \ref{lem:v1} and Corollary \ref{cor:W}.
\end{proof}

\subsection{Representation of the semigroup}
As a consequence of Lemma \ref{lem:decomp}, we infer from Eq.~\eqref{eq:SG} the representation
\begin{equation}
\label{eq:decomp} [\mb S(\tau)\tilde{\mb f}]_1(\rho)=[\mb S_0(\tau)\tilde{\mb f}]_1(\rho)
+\frac{1}{2\pi \I}\sum_{n=1}^6
\lim_{N\to\infty}\int_{\epsilon-\I N}^{\epsilon+\I N}e^{\lambda\tau}\int_0^1 G_n(\rho,s;\lambda)F_\lambda(s)\d s 
\d\lambda
\end{equation}
for any $\epsilon>0$ and 
$\tilde{\mb f}=(\tilde f_1,\tilde f_2)\in \rg(\mb I-\mb P)\cap C^2\times C^1([0,1])$, where
\[ F_\lambda(s)=s\tilde f_1'(s)+(\lambda+\tfrac32)\tilde f_1(s)+\tilde f_2(s). \]
For $f\in C([0,1])$ and $n\in \{1,2\dots,6\}$, we define the operators
\begin{align}
\label{def:Teps}
 T_{n,\epsilon}(\tau)f(\rho):&=\frac{1}{2\pi \I}\lim_{N\to\infty}
 \int_{\epsilon-\I N}^{\epsilon+\I N}e^{\lambda\tau}\int_0^1 G_n(\rho,s;\lambda)f(s)\d s \d\lambda \nonumber \\
 &=\frac{1}{2\pi}\int_\R e^{(\epsilon+\I\omega)\tau}\int_0^1 G_n(\rho,s;\epsilon+\I\omega)f(s)\d s\d\omega
\end{align}
for $\tau\geq 0$ and $\rho\in (0,1)$.
Since Eq.~\eqref{eq:decomp} holds for any $\epsilon>0$, we would like to take the limit $\epsilon\to 0+$.
The following result shows that this is indeed possible.

\begin{lemma}
\label{lem:epsto0}
For $\tau\geq 0$, $\rho\in (0,1)$, $f\in C([0,1])$, and $n\in \{1,2,\dots,6\}$, we have
\begin{align*}
 T_n(\tau) f(\rho):=\lim_{\epsilon\to 0+}T_{n,\epsilon}(\tau)f(\rho)
&=\frac{1}{2\pi}\int_\R e^{\I\omega\tau}\int_0^1 G_n(\rho,s;\I\omega)f(s)\d s \d\omega \\
&=\frac{1}{2\pi}\int_0^1 \int_\R e^{\I\omega\tau} G_n(\rho,s;\I\omega)\d\omega f(s)\d s.
\end{align*}
\end{lemma}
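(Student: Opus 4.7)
The plan is to establish both equalities by dominated convergence (to pass $\epsilon\to 0+$) and Fubini (to swap the order of integration), both underpinned by pointwise estimates on $G_n$ coming from Lemma~\ref{lem:decomp}. First I would verify that, for each fixed $\omega\in\R$ and $\rho\in(0,1)$, the inner integral $\int_0^1 G_n(\rho,s;\epsilon+\I\omega)f(s)\,\d s$ is absolutely convergent with $s$-singularities that are integrable uniformly in $\epsilon\in[0,\tfrac13]$: at $s=0$ the factor $s^{-1}$ inside $\phi_1$ or $\tilde\phi_1$ is dominated by the prefactor $s^2$, and at $s=1$ one has $|(1-s^2)^{-1/2+\lambda}|=(1-s^2)^{-1/2+\epsilon}$, which is locally integrable for all $\epsilon\ge 0$. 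Continuity in $\epsilon$ of this inner integral then follows by DCT applied to the $s$-integral, using that $G_n$ is jointly continuous in $(s,\lambda)$ on the relevant strip.

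Next I would construct a dominating majorant in $\omega$ that is independent of $\epsilon\in[0,\tfrac13]$. The symbol bounds from Lemma~\ref{lem:decomp} give $(1-2\lambda)^{-1}=\O(\langle\omega\rangle^{-1})$ and $\gamma_n=\O(\langle\omega\rangle^{-1})$, for a net non-oscillatory size of $\O(\langle\omega\rangle^{-2})$. In the pieces $G_1$ and $G_4$ that carry $\phi_0$, which is a priori $\O(\langle\omega\rangle)$, the cutoff $\chi(\rho\langle\omega\rangle)$ (respectively $\chi(s\langle\omega\rangle)$) confines the relevant variable to a region where the growth of $\phi_0$ is compensated by the small-argument smallness absorbed into the remaining polynomial factors. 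Where additional decay is required I would integrate by parts in $s$ against the oscillatory factors $(1\pm s)^{-\lambda}=\mp(\lambda-1)^{-1}\partial_s(1\pm s)^{1-\lambda}$ to gain extra powers of $\langle\omega\rangle^{-1}$; this produces a bound of the form $|\int_0^1 G_n(\rho,s;\epsilon+\I\omega)f(s)\,\d s|\lesssim_\rho \|f\|_\infty \langle\omega\rangle^{-1-\eta}$ for some $\eta>0$, uniformly for $\epsilon\in[0,\tfrac13]$.

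With these two ingredients, dominated convergence in $\omega$ yields the first equality, and Fubini yields the second: the double integrand $e^{\I\omega\tau}G_n(\rho,s;\I\omega)f(s)$ is absolutely integrable over $(s,\omega)\in(0,1)\times\R$, since on the imaginary axis the oscillatory factors in $\phi_j(\rho;\I\omega)\phi_k(s;\I\omega)(1-s^2)^{\I\omega}e^{\I\omega\tau}$ combine into a single phase $e^{\I\omega\Phi_n(\rho,s,\tau)}$ linear in $\omega$, while the remaining modulus obeys the symbol estimate above. The hardest part will be the delicate treatment of $G_1$ and $G_4$: one must verify that the linearly growing contribution of $\phi_0$ is genuinely absorbed by the cutoff rather than only pointwise bounded by $\langle\omega\rangle$, and that the integration-by-parts procedure does not generate boundary terms at $s=0$ or $s=1$ that are worse than $\langle\omega\rangle^{-1}$; both points can be handled by combining the symbol derivative estimates with the explicit polynomial structure of $\phi_0,\phi_1,\tilde\phi_1$.
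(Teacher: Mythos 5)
Your proposal is correct and follows essentially the same route as the paper: the uniform kernel bound $|G_n(\rho,s;\epsilon+\I\omega)|\lesssim \rho^{-1}s(1-s)^{-\frac12}\langle\omega\rangle^{-2}$ (the decay coming from $\gamma_n/(1-2\lambda)$, uniformly in $\epsilon\in[0,\tfrac13]$) makes both the limit $\epsilon\to 0+$ and the interchange of integrals immediate by dominated convergence and Fubini--Tonelli. Note only that your detour for $G_1$ and $G_4$ is unnecessary: in modulus $|\phi_0(\rho;\epsilon+\I\omega)|\lesssim\rho^{-1}$ uniformly in $\omega$ (the factor $1-2\lambda$ only appears in the small-$\rho$ expansion, not in the pointwise size), so neither the cutoff argument nor the integration by parts in $s$ is needed -- and the latter would in any case not be available here, since $f$ is only assumed continuous.
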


\begin{proof}
Since $|\phi_1(\rho;\epsilon+\I\omega)|+|\tilde\phi_1(\rho;\epsilon+\I\omega)|\lesssim \rho^{-1}$ for all
$\rho\in (0,1]$, $\epsilon\in [0,\frac13]$, and $\omega\in \R$, we obtain the bound
$|G_n(\rho,s;\epsilon+\I\omega)|\lesssim \rho^{-1}s(1-s)^{-\frac12+\epsilon}\langle\omega\rangle^{-2}$.
Thus, the claim follows by dominated convergence and Fubini-Tonelli.
\end{proof}

\section{Strichartz estimates}
\noindent We use the formula \eqref{eq:decomp} to derive Strichartz estimates.
More precisely, we prove the following.

\begin{theorem}[Strichartz estimates for $\mb S$]
\label{thm:strich}
Let $\mb S$ be the semigroup generated by $\mb L_0+\mb L'$, see Proposition \ref{prop:SG}.
Furthermore, let $p\in [2,\infty]$ and $q\in [6,\infty]$ such that $\frac{1}{p}+\frac{3}{q}=\frac12$.
Then we have the bound
\[ \|[\mb S(\cdot)(\mb I-\mb P)\mb f]_1\|_{L^p(\R_+)L^q(\B^3)}\lesssim \|(\mb I-\mb P)\mb f\|_{\mc H} \]
for all $\mb f\in \mc H$.
In addition, we have
\[ \left \|\int_0^\tau [\mb S(\tau-\sigma)(\mb I-\mb P)\mb h(\sigma,\cdot)]_1\d\sigma 
\right \|_{L^p_\tau(\R_+)L^q(\B^3)}
\lesssim \|(\mb I-\mb P)\mb h(\tau,\cdot)\|_{L^1_\tau (\R_+)\mc H} \]
for all $\mb h\in C([0,\infty),\mc H)\cap L^1(\R_+,\mc H)$.
\end{theorem}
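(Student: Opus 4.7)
The plan is to use the representation formula \eqref{eq:decomp} together with the decomposition of the Green function in Lemma \ref{lem:decomp}. The free contribution $[\mb S_0(\tau)\tilde{\mb f}]_1$ is already controlled by Proposition \ref{prop:strich}, so the task reduces to establishing Strichartz bounds for each of the six perturbative operators
\[ \mc T_n(\tau)\tilde{\mb f}(\rho):=\frac{1}{2\pi \I}\lim_{N\to\infty}\int_{\epsilon-\I N}^{\epsilon+\I N}e^{\lambda\tau}\int_0^1 G_n(\rho,s;\lambda) F_\lambda(s)\,\d s\,\d\lambda,\qquad n=1,\dots,6, \]
and Lemma \ref{lem:epsto0} permits pushing $\epsilon\to 0+$ so the contour sits on the imaginary axis.

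For each $n$ the kernel $G_n$ factors into an explicit piece built from $\phi_0,\phi_1,\tilde\phi_1$ multiplied by the symbol-type remainder $\gamma_n=\Oo(\langle\omega\rangle^{-1})+\O(\langle\omega\rangle^{-2})+\O((1-s)\langle\omega\rangle^{-2})$. The explicit factors are precisely the building blocks of the free Green function $G_0$, so after the $\omega$-integration each $\mc T_n$ looks like a controlled perturbation of the d'Alembert-type formula used in the proof of Proposition \ref{prop:strich}. The growing term $(\lambda+\tfrac32)\tilde f_1$ in $F_\lambda$ is absorbed by the $\langle\omega\rangle^{-1}$ decay of $\gamma_n$, and the oddness of the leading $\Oo(\langle\omega\rangle^{-1})$ part is what secures absolute convergence of the frequency integral after a single integration by parts in $\omega$.

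The Strichartz bound for $[\mb S(\cdot)(\mb I-\mb P)\mb f]_1$ is then established at the two endpoints and interpolated, mirroring Proposition \ref{prop:strich}. For the endpoint $L^\infty(\R_+)L^6(\B^3)$ one first upgrades the semigroup bound from Proposition \ref{prop:SG} to $\|\mb S(\tau)(\mb I-\mb P)\|_{\mc H}\lesssim 1$ uniformly in $\tau$ (which is obtained by exactly the same Green function decomposition applied in $\mc H$ rather than in a Strichartz norm) and combines this with the Sobolev embedding $H^1(\B^3)\hookrightarrow L^6(\B^3)$. For the endpoint $L^2(\R_+)L^\infty(\B^3)$ one proceeds term by term: substitute the explicit form of $\phi_0,\phi_1,\tilde\phi_1$, integrate by parts in $\omega$ using the symbol bounds on $\gamma_n$ to localise $\mc T_n(\tau)\tilde{\mb f}(\rho)$ onto an $s$-region near the physical characteristic $s\approx 1-e^{-\tau}\pm e^{-\tau}\rho$, and then dominate the resulting kernel by a Hardy--Littlewood maximal function of the data so that the $L^2$-boundedness of $\mc M$ (exactly as in the proof of Proposition \ref{prop:strich}) closes the estimate. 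Once both endpoints are in hand, the full range $\tfrac1p+\tfrac3q=\tfrac12$ follows by the interpolation used at the end of Proposition \ref{prop:strich}, and the inhomogeneous estimate then follows from Minkowski's inequality exactly as in the last lines of that proof.

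The main obstacle is the $L^2L^\infty$ endpoint analysis of the six $\mc T_n$. The cut-offs $\chi(\rho\langle\omega\rangle)$ and $\chi(s\langle\omega\rangle)$ appearing in Lemma \ref{lem:decomp} entangle the physical variables with the oscillation frequency, and because $\gamma_n$ decays only at rate $\langle\omega\rangle^{-1}$ in its odd component, the $\omega$-integral is borderline convergent. Performing the right number of integrations by parts in each subregion $\{\rho\langle\omega\rangle\lesssim 1\}$ versus $\{\rho\langle\omega\rangle\gtrsim 1\}$ (and similarly in $s$), and tracking the dependence on $\tau$ sharply enough to recover $L^2_\tau$ integrability rather than only pointwise decay, is the delicate technical heart of the argument.
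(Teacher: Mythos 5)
Your overall architecture matches the paper's: split off the free part via Eq.~\eqref{eq:decomp} and Proposition \ref{prop:strich}, bound the six perturbative pieces through oscillatory-integral kernel estimates in $\omega$, obtain the two endpoints, interpolate, and get the inhomogeneous bound from Minkowski. However, there is a genuine gap in your treatment of the term $(\lambda+\tfrac32)\tilde f_1$ in $F_\lambda$. You claim this factor of $\lambda$ is ``absorbed by the $\langle\omega\rangle^{-1}$ decay of $\gamma_n$'' with absolute convergence secured by a single integration by parts in $\omega$. But $\lambda\gamma_n$ is only $\Oo(\langle\omega\rangle^{0})+\O(\langle\omega\rangle^{-1})$: one integration by parts in $\omega$ still leaves a non--absolutely convergent integral, and the best kernel decay you can extract this way is of order $\langle \tau+\log(1-s)\rangle^{-1}$, which is not in $L^1_\tau$, so the Young's-inequality step that produces the crucial $L^2_\tau L^\infty$ endpoint does not close. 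The paper's essential device here is different and is missing from your proposal: it trades a derivative in $s$ for decay in $\lambda$. Concretely, for the operators $\dot T_n$ (Eq.~\eqref{def:dotT} and Proposition \ref{prop:dotTn}) one integrates by parts in $s$ using the factor $(1\mp s)^{-\frac12+\lambda}$, gaining $\frac{1}{1+2\lambda}$, putting the derivative on $s\gamma_n(\rho,s;\lambda)f(s)$ (which is again of symbol type), and estimating the resulting boundary terms $B_n(\tau)f(\rho)\propto \rho f(\rho)$ times an oscillatory integral via Lemma \ref{lem:osc} and $|\rho f(\rho)|\lesssim\|f\|_{H^1(\B^3)}$. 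This is exactly where the $H^1$ regularity of $\tilde f_1$ (as opposed to the mere $L^2$ bound that suffices for $T_n$ acting on $|\cdot|\tilde f_1'+\tfrac32\tilde f_1+\tilde f_2$) enters; your proposal never exploits this extra regularity, and without it the argument fails.

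A secondary, less serious, divergence: for the $L^\infty_\tau L^6$ endpoint you invoke the improved uniform bound $\|\mb S(\tau)(\mb I-\mb P)\|_{\mc H}\lesssim 1$. In the paper this bound is only proved later (Lemma \ref{lem:energy}) and requires an additional decomposition of the differentiated kernel $G'$ with its own operators $S_n'$, $\dot S_n'$; the proof of Theorem \ref{thm:strich} avoids it entirely by getting the $L^\infty_\tau$ endpoint for each perturbative operator directly from the kernel bound via Cauchy--Schwarz (as in Lemma \ref{lem:Tn}), with the free part handled by Proposition \ref{prop:gen}. Your route is workable in principle but imports substantially more machinery than the statement needs, and as written it leaves that machinery unproved.
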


\subsection{Preparations}

We start with an elementary result on oscillatory integrals which will be applied
frequently in the following.

\begin{lemma}
\label{lem:osc}
We have
\[ \int_\R e^{\I a \omega}[\Oo(\langle\omega\rangle^{-1})+\O(\langle\omega\rangle^{-2})]\d\omega
=O(\langle a \rangle^{-2}) \]
for all $a\in \R\backslash\{0\}$.
\end{lemma}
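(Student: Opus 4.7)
The plan is to split the integrand as $f(\omega)+g(\omega)$, where $f=\Oo(\langle\omega\rangle^{-1})$ is odd of symbol type and $g=\O(\langle\omega\rangle^{-2})$, and to handle the two contributions separately. The $g$-part is easy: it is already absolutely integrable, so the trivial bound gives $\int_\R |g|\,\d\omega\lesssim 1$ uniformly in $a$, and for $|a|\geq 1$ two integrations by parts produce $\int_\R e^{\I a\omega}g(\omega)\d\omega=-a^{-2}\int_\R e^{\I a\omega}g''(\omega)\d\omega$, where the boundary terms at $\pm\infty$ vanish from the decay of $g$ and $g'$ and the remaining integral is controlled by the symbol bound $|g''|\lesssim\langle\omega\rangle^{-4}\in L^1(\R)$. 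Combining the two regimes yields the desired $O(\langle a\rangle^{-2})$.

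For $f$, which is only $\O(\langle\omega\rangle^{-1})$ and so not absolutely integrable, I would exploit oddness to convert the integral to a sine transform,
\[ \int_\R e^{\I a\omega}f(\omega)\,\d\omega = 2\I\int_0^\infty \sin(a\omega)f(\omega)\,\d\omega, \]
which is conditionally convergent. For $|a|\geq 1$ I integrate by parts twice: the boundary term at $\omega=0$ in the first step vanishes because $f$ is odd and smooth (so $f(0)=0$), the boundary term at $\omega=0$ in the second step vanishes because of the $\sin(0)=0$ factor, and the boundary contributions at $\omega=\infty$ vanish thanks to the symbol decay of $f$ and $f'$. What remains is $-a^{-2}\int_0^\infty \sin(a\omega)f''(\omega)\d\omega$, controlled by $\|f''\|_{L^1(\R_+)}\lesssim \int\langle\omega\rangle^{-3}\d\omega<\infty$, producing $O(|a|^{-2})$.

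The one subtlety, and the step requiring the most care, is that this naive IBP argument produces a factor $|a|^{-2}$ that is useless as $a\to 0$; for $|a|\leq 1$ I have to supply a separate uniform bound. Here I would split the sine integral at $\omega=|a|^{-1}$. On the inner piece $[0,|a|^{-1}]$, use the elementary estimates $|\sin(a\omega)|\leq |a\omega|$ and $|f(\omega)|\lesssim \langle\omega\rangle^{-1}$ so the integrand is $\lesssim|a|$, giving an $O(1)$ total after integrating over a window of length $|a|^{-1}$. On the outer piece $[|a|^{-1},\infty)$, a single integration by parts produces a boundary term at $\omega=|a|^{-1}$ of size $|a|^{-1}|f(|a|^{-1})|=O(1)$ (using $|f(|a|^{-1})|\lesssim |a|$) and a remaining integral bounded by $|a|^{-1}\int_{|a|^{-1}}^\infty \langle\omega\rangle^{-2}\d\omega=O(1)$. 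Assembling the estimates in the two regimes $|a|\leq 1$ and $|a|\geq 1$ then yields the claimed bound $O(\langle a\rangle^{-2})$.
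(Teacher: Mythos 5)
Your proof is correct and follows essentially the same route as the paper: the $\O(\langle\omega\rangle^{-2})$ part is handled by absolute convergence plus two integrations by parts, and the odd part is converted to a sine integral on $(0,\infty)$, with oddness killing the boundary term at $\omega=0$, two integrations by parts giving $O(|a|^{-2})$ for $|a|\geq 1$, and a split at scale $|a|^{-1}$ giving uniform boundedness for $|a|\leq 1$. The only (immaterial) difference is that you use a sharp split at $\omega=|a|^{-1}$ together with $|\sin(a\omega)|\leq|a\omega|$ and one integration by parts on the outer piece, whereas the paper uses a smooth cut-off $\chi(a\omega)$ and a rescaling of the integration variable.
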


\begin{proof}
Since 
$\int_\R e^{\I a\omega}\O(\langle\omega\rangle^{-2})\d\omega$
is absolutely convergent, the bound
\[ \int_\R e^{\I a\omega}\O(\langle\omega\rangle^{-2})\d\omega=O(\langle a\rangle^{-2}) \]
follows immediately by means of two integrations by parts.
Furthermore, since $\Oo(\langle\omega\rangle^{-1})$ is odd, we obtain
\[ \int_\R e^{\I a\omega}\Oo(\langle\omega\rangle^{-1})\d\omega
=2\I \int_0^\infty \sin(a\omega)\Oo(\langle\omega\rangle^{-1})\d\omega=:I(a). \]
Without loss of generality we assume $a>0$.
Then we decompose as
\begin{align*}
 I(a)&=\int_0^\infty \chi(a\omega)\sin(a\omega)\Oo(\omega^{-1})\d\omega
 +\int_0^\infty [1-\chi(a\omega)]\sin(a\omega)\Oo(\omega^{-1})\d\omega \\
 &=:I_1(a)+I_2(a).
 \end{align*}
 We have
 \[ I_1(a)=\int_0^\infty \chi(\omega)\sin(\omega)\O(a^0\omega^{-1})\d\omega=O(a^0) \]
 and an integration by parts yields
 \begin{align*}
  I_2(a)&=a^{-1}\int_0^\infty [1-\chi(a\omega)]\cos(a\omega)\O(\omega^{-2})\d\omega
 +\int_0^\infty \chi'(a\omega)\cos(a\omega)\O(\omega^{-1})\d\omega \\
 &=\int_0^\infty [1-\chi(\omega)]\cos(\omega)\O(a^0\omega^{-2})\d\omega
 +\int_0^\infty \chi'(\omega)\cos(\omega)\O(a^0\omega^{-1})\d\omega \\
 &=O(a^0).
\end{align*}
Consequently, we may assume $a\geq 1$ and then
the claim follows by means of two integrations by parts.
\end{proof}

\begin{remark}
As is obvious from the proof, the bound in Lemma \ref{lem:osc} can be improved to $C_N\langle a\rangle^{-N}$ for any $N\in \N$. However, we do not need this.
\end{remark}

\subsection{Kernel bounds}
Next, we prove pointwise bounds on the kernels of the operators $T_n(\tau)$ from 
Lemma \ref{lem:epsto0}.

\begin{proposition}
\label{prop:Gn}
We have the bounds
\[ \left |\int_\R e^{\I\omega\tau}G_n(\rho,s;\I\omega)\d\omega\right |\lesssim s(1-s)^{-\frac12}
\langle \tau+\log(1-s)\rangle^{-2} \]
for all $\tau\geq 0$, $\rho,s\in (0,1)$, and $n\in \{1,2,\dots,6\}$.
\end{proposition}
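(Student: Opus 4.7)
My approach is to treat $\int_{\R}e^{\I\omega\tau}G_n(\rho,s;\I\omega)\,\d\omega$ as an oscillatory integral in $\omega$: for each $n$, extract the explicit phase from the factors $(1-s^2)^{-\frac12+\I\omega}$, $(1\pm\rho)^{\frac12-\I\omega}$, $(1+s)^{\frac12-\I\omega}$, and then apply Lemma~\ref{lem:osc} (or an integration-by-parts variant) to the remaining symbol-type piece $\gamma_n/(1-2\I\omega)$ together with the cutoff factor.

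Concretely, for each $n$ I would write
\[
G_n(\rho,s;\I\omega)=A_n(\rho,s)\,e^{\I\omega b_n(\rho,s)}\,K_n(\rho,s;\omega),
\]
where $A_n$ is the $\omega$-independent modulus amplitude, $b_n$ is a combination of logarithms of $1\pm\rho$ and $1\pm s$ arising after the cancellations $(1-s^2)=(1-s)(1+s)$, and $K_n$ collects the cutoff and the factor $\gamma_n/(1-2\I\omega)$. A short bookkeeping yields, up to bounded perturbations, $b_n=\log(1-s)$ for $n\in\{2,5\}$; a similar form (with the additional piece involving $\log(1-\rho)$ restricted to the $\chi$-support where $\rho\langle\omega\rangle\lesssim 1$, hence effectively bounded) for $n\in\{1,4\}$; $b_3=\log(1-s)-\log(1-\rho)$; and $b_6=\log(1+s)-\log(1+\rho)$. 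The amplitude satisfies $A_n\lesssim s(1-s)^{-1/2}$ times either bounded factors or $\rho^{-1},s^{-1}$ factors; the latter are absorbed either through the Taylor expansion $\phi_0(x;\I\omega)=(1-2\I\omega)[1+\O((x\omega)^2)]$ on the $\chi$-support (cancelling the $\rho^{-1}$ in $G_1,G_4$ against $1/(1-2\I\omega)$) or through the $[1-\chi]$ cutoff, which restricts $|\omega|\gtrsim\rho^{-1}$ or $s^{-1}$ and produces a compensating factor after $\omega$-integration.

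The inner integral $I_n(a):=\int_{\R}e^{\I\omega a}K_n(\rho,s;\omega)\,\d\omega$ is then estimated by Lemma~\ref{lem:osc} in tandem with one or two integrations by parts handling the cutoff contribution (each derivative of $\chi(\rho\langle\omega\rangle)$ producing a factor $\rho$, analogously for $s$). This yields $|I_n(a)|\lesssim\sigma_n^3/(\sigma_n^2+a^2)$ with $\sigma_n\in\{1,\rho,\rho,1,s,s\}$, and hence
\[
\Bigl|\int_{\R}e^{\I\omega\tau}G_n(\rho,s;\I\omega)\,\d\omega\Bigr|\lesssim A_n(\rho,s)\cdot\frac{\sigma_n^3}{\sigma_n^2+(\tau+b_n)^2}.
\]
For $n\in\{1,2,4,5\}$ the phase shift $b_n-\log(1-s)$ is uniformly bounded, so $\langle\tau+b_n\rangle^{-2}\simeq\langle\tau+\log(1-s)\rangle^{-2}$ and the desired estimate follows directly.

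The main obstacle is $n\in\{3,6\}$, where $b_n$ can differ from $\log(1-s)$ by an arbitrarily large amount. For $G_3$ I would exploit the ordering $\rho\leq s$ and dichotomize on the size of $|\log(1-\rho)|$ relative to $|\tau+\log(1-s)|$: when $|\log(1-\rho)|\leq\tfrac12|\tau+\log(1-s)|$, the phase $\tau+b_3$ is comparable to $\tau+\log(1-s)$ and the oscillatory decay of $I_3$ suffices; when the opposite holds, $1-\rho$ is exponentially small in $|\tau+\log(1-s)|$, and the factor $(1-\rho)^{1/2}$ in $A_3$ absorbs the target decay directly. For $G_6$, where the modulus amplitude lacks the $(1-s)^{-1/2}$ of the target (one factor $(1-s)^{1/2}$ from $\tilde\phi_1(s;\I\omega)$ cancels the $(1-s)^{-1/2}$ from $(1-s^2)^{-1/2}$) and $b_6$ contains no $\log(1-s)$ term, a parallel analysis works: the bound $(s/\rho)\cdot s^3/(s^2+(\tau+b_6)^2)\lesssim s^3/(s^2+\tau^2)$ (using $\rho\geq s$ and $b_6$ bounded) is compared to $s(1-s)^{-1/2}\langle\tau+\log(1-s)\rangle^{-2}$ by splitting into the regimes where either $\tau+\log(1-s)$ is comparable to $\tau$ or $s$ is so close to $1$ that $(1-s)^{-1/2}$ alone dominates.
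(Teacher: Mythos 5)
Your proposal is correct and follows essentially the same route as the paper: pointwise oscillatory-integral bounds on each $G_n$ via Lemma \ref{lem:osc} and integrations by parts, with the cutoffs $\chi(\rho\langle\omega\rangle)$, $\chi(s\langle\omega\rangle)$ compensating the singular $\rho^{-1}$, $s^{-1}$ factors, and the amplitude factor $(1-\rho)^{\frac12}$ (resp.\ boundedness of the $G_6$ phase) absorbing the phase mismatch for $G_3$ (resp.\ $G_6$). The only cosmetic deviations are that the paper removes the $\rho^{-1}$ in $\phi_0$ via the exact representation \eqref{eq:phi0decomp} rather than a Taylor expansion on the cutoff support, and replaces your dichotomies by the direct inequalities $(1-\rho)^{\frac12}\langle\tau+\log(1-s)-\log(1-\rho)\rangle^{-2}\lesssim\langle\tau+\log(1-s)\rangle^{-2}$ and $s\langle\tau\rangle^{-2}\lesssim s(1-s)^{-\frac12}\langle\tau+\log(1-s)\rangle^{-2}$.
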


\begin{proof}
We start with $G_1$
and use
\begin{equation}
\label{eq:phi0decomp} \phi_0(\rho;\I\omega)=\tfrac12 (1-2\I\omega)\int_0^1 [(1+\rho t)^{-\frac12-\I\omega}+(1-\rho t)^{-\frac12-\I\omega}]
\d t 
\end{equation}
in order to get rid of the singular factor $\rho^{-1}$.
Thanks to the cut-off $\chi(\rho\langle\omega\rangle)$ (and the fact that $\rho\in (0,1)$ is fixed),
we may interchange the order of integration and by Lemma \ref{lem:decomp} it suffices to prove the stated
bound for the expression
\begin{align*}
 I_\pm(\rho,s, t;\tau)&:=\int_\R \chi(\rho\langle\omega\rangle)e^{\I\omega\tau}(1\pm \rho t)^{-\frac12-\I\omega}s(1-s)^{-\frac12+\I\omega} \\
&\quad \times [\O(\rho^0 s^0)\Oo(\langle\omega\rangle^{-1})+\O(\rho^0(1-\rho)^0s^0(1-s)^0\langle\omega\rangle^{-2})]
\d\omega.
\end{align*}
The latter may be rewritten as
\begin{align*} I_\pm(\rho,s,t;\tau)&=(1\pm\rho t)^{-\frac12}s(1-s)^{-\frac12}\int_\R \chi(\rho\langle\omega\rangle)
e^{\I\omega[\tau+\log(1-s)-\log(1\pm \rho t)]} \\
&\quad \times [\O(\rho^0 s^0)\Oo(\langle\omega\rangle^{-1})+\O(\rho^0(1-\rho)^0s^0(1-s)^0\langle\omega\rangle^{-2})]
\d\omega.
\end{align*}
On the support of the cut-off $\chi(\rho\langle\omega\rangle)$ we have
$|\log(1\pm \rho t)|\lesssim 1$ for all $t\in [0,1]$ and thus, Lemma \ref{lem:osc} yields the bound
\begin{align*}
 |I_\pm(\rho,s,t;\tau)|&\lesssim s(1-s)^{-\frac12}\langle\tau+\log(1-s)-\log(1\pm \rho t)\rangle^{-2} \\
 &\lesssim s(1-s)^{-\frac12}\langle \tau+\log(1-s)\rangle^{-2}
 \end{align*}
 for all $\tau\geq 0$ and $\rho,s,t\in (0,1)$.

For $G_2$ the relevant expression is
\begin{align*}
 I(\rho,s;\tau)&:=\rho^{-1}(1+\rho)^\frac12s(1-s)^{-\frac12}\int_\R
[1-\chi(\rho\langle\omega\rangle)]e^{\I\omega[\tau+\log(1-s)-\log(1+\rho)]} \\
&\quad\times \O(\rho^0(1-\rho)^0
s^0(1-s)^0\langle\omega\rangle^{-2})\d\omega.
\end{align*}
Note that
\begin{align*}
 \rho^{-1}\int_\R [1-\chi(\rho\langle\omega\rangle)]\O(\langle\omega\rangle^{-2})\d\omega 
&=\rho^{-1}\int_\R \chi(|\omega|)[1-\chi(\rho\langle\omega\rangle)]\O(\langle\omega\rangle^{-2})\d\omega \\
&\quad +\rho^{-1}\int_\R [1-\chi(|\omega|)][1-\chi(\rho|\omega|)]\O(|\omega|^{-2})\d\omega \\
&=\int_\R\chi(|\omega|)[1-\chi(\rho\langle\omega\rangle)]\O(\langle\omega\rangle^{-1})\d\omega \\
&\quad+ \int_\R [1-\chi(|\tfrac{\omega}{\rho}|)][1-\chi(|\omega|)]\O(\rho^0|\omega|^{-2})\d\omega \\
&=O(\rho^0)
\end{align*}
and thus, two integrations by parts yield
\begin{align*}
 |I(\rho,s;\tau)|&\lesssim s(1-s)^{-\frac12}\langle \tau+\log(1-s)-\log(1+\rho)\rangle^{-2}  \\
 &\lesssim s(1-s)^{-\frac12}\langle \tau+\log(1-s)\rangle^{-2}.
 \end{align*}
 
The bound for $G_3$ is similar, i.e., here we have to consider
\begin{align*}
 I(\rho,s;\tau)&:=\rho^{-1}(1-\rho)^\frac12s(1-s)^{-\frac12}\int_\R
[1-\chi(\rho\langle\omega\rangle)]e^{\I\omega[\tau+\log(1-s)-\log(1-\rho)]} \\
&\quad\times \O(\rho^0(1-\rho)^0
s^0(1-s)^0\langle\omega\rangle^{-2})\d\omega
\end{align*}
and the argument from above yields 
\[ |I(\rho,s;\tau)|\lesssim (1-\rho)^\frac12 s(1-s)^{-\frac12}
\langle \tau+\log(1-s)-\log(1-\rho)\rangle^{-2}. \]
Since $(1-\rho)^\frac12 \langle \tau+\log(1-s)-\log(1-\rho)\rangle^{-2}\lesssim \langle
\tau+\log(1-s)\rangle^{-2}$, the desired bound follows.

For $G_4$ we use again the representation \eqref{eq:phi0decomp} which leads to
\begin{align*} I_\pm(\rho,s,t;\tau)&=1_{\R_+}(\rho-s)\rho^{-1}(1+\rho)^\frac12
s^2(1-s^2)^{-\frac12}(1\pm s t)^{-\frac12} \\
&\quad \times\int_\R \chi(s\langle\omega\rangle)
e^{\I\omega[\tau+\log(1-s^2)-\log(1\pm s t)-\log(1+\rho)]} \\
&\quad \times [\O(\rho^0 s^0)\Oo(\langle\omega\rangle^{-1})+\O(\rho^0(1-\rho)^0s^0(1-s)^0\langle\omega\rangle^{-2})]
\d\omega.
\end{align*}
and from Lemma \ref{lem:osc} we obtain the desired bound
\[ |I_\pm(\rho,s,t;\tau)|\lesssim s(1-s)^{-\frac12}\langle \tau+\log(1-s)\rangle^{-2}. \]

In view of the bound $1_{\R_+}(\rho-s)\rho^{-1}\lesssim s^{-1}$, the estimate for 
$G_5$ is identical to the one for $G_2$.
Finally, for $G_6$ we consider
\begin{align*}
I(\rho,s;\tau)&:=1_{\R_+}(\rho-s)\rho^{-1}(1+\rho)^\frac12 s (1+s)^{-\frac12}
\int_\R [1-\chi(s\langle\omega\rangle)]e^{\I\omega[\tau+\log(1+s)-\log(1+\rho)]} \\
&\quad\times \O(\rho^0(1-\rho)^0
s^0(1-s)^0\langle\omega\rangle^{-2})\d\omega
\end{align*}
and as for $G_3$ we find
\begin{align*} 
|I(\rho,s;\lambda)|&\lesssim s(1+s)^{-\frac12}\langle \tau+\log(1+s)-\log(1+\rho)\rangle^{-2} 
\lesssim s\langle\tau\rangle^{-2} \\
&\lesssim s(1-s)^{-\frac12}\langle \tau+\log(1-s)\rangle^{-2}.
\end{align*}
\end{proof}

The pointwise bounds from Proposition \ref{prop:Gn} are sufficient to obtain Strichartz
estimates for the operators $T_n$.

\begin{lemma}
\label{lem:Tn}
Let $p\in [2,\infty]$, $q\in [6,\infty]$ and assume $\frac{1}{p}+\frac{3}{q}=\frac12$.
Then we have
\[ \|T_n(\cdot)f\|_{L^p(\R_+)L^q(\B^3)}\lesssim \|f\|_{L^2(\B^3)} \]
for all $f\in C([0,1])$ and $n\in \{1,2,\dots,6\}$.
\end{lemma}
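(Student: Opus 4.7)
The strategy is to reduce everything to an endpoint estimate via the pointwise bound of Proposition~\ref{prop:Gn}, then recognize a convolution structure in $\tau$. A crucial observation is that the kernel bound in Proposition~\ref{prop:Gn} is uniform in $\rho$; combined with the fact that $\B^3$ has finite volume (so that $L^\infty(\B^3)\hookrightarrow L^q(\B^3)$ trivially for every $q\in[6,\infty]$), this reduces the claim to
\[ \|T_n(\cdot)f\|_{L^p(\R_+)L^\infty(\B^3)}\lesssim \|f\|_{L^2(\B^3)} \qquad \text{for all } p\in[2,\infty]. \]

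Combining Lemma~\ref{lem:epsto0} with Proposition~\ref{prop:Gn} yields the $\rho$-independent bound
\[ |T_n(\tau)f(\rho)|\lesssim \int_0^1 s(1-s)^{-\frac12}\langle\tau+\log(1-s)\rangle^{-2}|f(s)|\,\d s. \]
Next, I would perform the substitution $\sigma=-\log(1-s)$, so that $s=1-e^{-\sigma}$ and $\d s=e^{-\sigma}\,\d\sigma$. Setting $h(\sigma):=|f(1-e^{-\sigma})|(1-e^{-\sigma})e^{-\sigma/2}$, the radial identity $\|f\|_{L^2(\B^3)}^2\simeq\int_0^1|f(s)|^2 s^2\,\d s$ gives $\|h\|_{L^2(\R_+)}\simeq\|f\|_{L^2(\B^3)}$, since the Jacobian $e^{-\sigma}$ exactly balances the radial weight against the $e^{-\sigma/2}$ factor in $h$. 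With this choice, the integral above becomes $(\psi*h)(\tau)$ for $\psi(\tau):=\langle\tau\rangle^{-2}$ (extending $h$ by zero to $\R$).

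The proof then closes by Young's inequality in $\tau$: with $\frac{1}{a}=\frac{1}{2}+\frac{1}{p}$ one has $a\in[1,2]$ for $p\in[2,\infty]$, and $\psi\in L^a(\R)$ whenever $2a>1$, which holds throughout this range. Thus $\|\psi*h\|_{L^p(\R)}\leq\|\psi\|_{L^a(\R)}\|h\|_{L^2(\R)}\lesssim\|f\|_{L^2(\B^3)}$, establishing the reduced estimate and, via the trivial $L^\infty\hookrightarrow L^q$ embedding on $\B^3$, the lemma in the full range of $(p,q)$.

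There is essentially no substantive obstacle beyond Proposition~\ref{prop:Gn} itself: the only nontrivial ingredient is the substitution $\sigma=-\log(1-s)$, which is engineered to simultaneously tame the edge singularity $(1-s)^{-\frac12}$, convert the radial $L^2(\B^3)$ weight into flat Lebesgue measure on $\R_+$, and expose the convolution structure against the decaying kernel $\langle\tau\rangle^{-2}$. Everything then reduces to a one-line application of Young's inequality.
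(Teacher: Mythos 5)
Your proof is correct, and it relies on exactly the same ingredients as the paper's: the pointwise kernel bound of Proposition \ref{prop:Gn}, the exponential change of variables $s=1-e^{-y}$ (which converts the radial weight $s^2\,\d s$ into flat measure against $h$ and exposes the convolution with $\langle\cdot\rangle^{-2}$), and Young's inequality in $\tau$. The only difference is how the range of exponents is covered: the paper establishes the two endpoints $L^2(\R_+)L^\infty(\B^3)$ (Young with $a=1$) and $L^\infty(\R_+)L^6(\B^3)$ (Cauchy--Schwarz plus the embedding into $L^6$) and then interpolates as in Proposition \ref{prop:strich}, whereas you exploit the fact that the bound in Proposition \ref{prop:Gn} is uniform in $\rho$ to prove the strictly stronger estimate $\|T_n(\cdot)f\|_{L^p(\R_+)L^\infty(\B^3)}\lesssim\|f\|_{L^2(\B^3)}$ for every $p\in[2,\infty]$ by a single application of Young with the $p$-dependent exponent $\frac1a=\frac12+\frac1p\in[\frac12,1]$, and then conclude by the trivial embedding $L^\infty(\B^3)\hookrightarrow L^q(\B^3)$ on the bounded ball. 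This is legitimate precisely because the perturbative kernels $K_n$ are smoothing in space, so no admissible-pair scaling is needed here; note, however, that this shortcut is special to the operators $T_n$ and would not work for the free evolution in Proposition \ref{prop:strich}, where the genuine endpoint-plus-interpolation structure is unavoidable. Your argument is thus a slight simplification (no interpolation step) at the cost of proving more than the lemma asserts, and both routes are equally rigorous.
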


\begin{proof}
We set
\[ K_n(\rho,s;\tau):=\frac{1}{2\pi}\int_\R e^{\I\omega\tau}G_n(\rho,s;\I\omega)\d\omega \]
and from Lemma \ref{lem:epsto0} we infer
\[ T_n(\tau)f(\rho)=\int_0^1 K_n(\rho,s;\tau)f(s)\d s. \]
The change of variable $s=1-e^{-y}$ yields
\[ T_n(\tau)f(\rho)=\int_0^\infty e^{-y/2}K_n(\rho,1-e^{-y};\tau)f(1-e^{-y})e^{-y/2}\d y \]
and from Proposition \ref{prop:Gn} we obtain the bound
\[ \|T_n(\tau)f\|_{L^\infty(\B^3)}\lesssim \int_0^\infty \langle \tau-y\rangle^{-2}|f(1-e^{-y})|
(1-e^{-y})e^{-y/2}\d y. \]
Consequently, Young's inequality implies
\begin{align*}
 \|T_n(\cdot)f\|_{L^2(\R_+)L^\infty(\B^3)}^2&\lesssim \int_0^\infty
|f(1-e^{-y})|^2(1-e^{-y})^2 e^{-y}\d y \\
&=\int_0^1 |f(s)|^2 s^2 \d s\simeq \|f\|_{L^2(\B^3)}^2.
\end{align*}
Furthermore, by Cauchy-Schwarz we infer
\[ \|T_n(\tau)f\|_{L^\infty(\B^3)}\lesssim \|\langle \tau-\cdot\rangle^{-2}\|_{L^2(\R)}
\|f\|_{L^2(\B^3)} \lesssim \|f\|_{L^2(\B^3)}\]
which yields
$\|T_n(\cdot)f\|_{L^\infty(\R_+)L^6(\B^3)}\lesssim \|f\|_{L^2(\B^3)}$.
Thus, the claim follows by interpolation.
\end{proof}

\subsection{The operators $\dot T_n(\tau)$}
The operators $T_n(\tau)$ alone are not sufficient to prove Theorem \ref{thm:strich} since the
function $F_\lambda$ in Eq.~\eqref{eq:decomp} contains a term $\lambda \tilde f_1$.
Formally, multiplication by a factor $\lambda$ is equivalent to taking a derivative with
respect to $\tau$. 
Consequently, for $\tau\geq 0$, $\rho\in (0,1)$, $f\in C^1([0,1])$, and $n\in \{1,2,\dots,6\}$, we define
\begin{equation}
\label{def:dotT}
\dot T_{n,\epsilon}(\tau)f(\rho):=\frac{1}{2\pi \I}\lim_{N\to\infty}\int_{\epsilon-\I N}^{\epsilon+\I N}
\lambda e^{\lambda\tau}\int_0^1 G_n(\rho,s;\lambda)f(s)\d s\d\lambda.
\end{equation}
Note that the additional factor of $\lambda$ spoils the absolute convergence of the $\lambda$-integral.
Thus, we perform an integration by parts with respect to $s$ in order to gain a factor $\langle\lambda\rangle^{-1}$.
This illustrates the general philosophy that one may trade derivatives in $s$ for decay in $\lambda$.
 
\begin{proposition}
\label{prop:dotTn}
Let $p\in [2,\infty]$, $q\in [6,\infty]$ and assume $\frac{1}{p}+\frac{3}{q}=\frac12$. 
Furthermore, for $n\in \{1,2,\dots,6\}$, set
\[ \dot T_n(\tau)f(\rho):=\lim_{\epsilon\to 0+}\dot T_{n,\epsilon}(\tau)f(\rho). \]
Then we have
\[ \|\dot T_n(\cdot)f\|_{L^p(\R_+)L^q(\B^3)}\lesssim \|f\|_{H^1(\B^3)} \]
for all $f\in C^1([0,1])$.
\end{proposition}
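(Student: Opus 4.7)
The plan is to move the extra factor of $\lambda$ in \eqref{def:dotT} onto $f$ via an integration by parts in $s$, thereby reducing matters to the $T_n$-type estimates from Lemma \ref{lem:Tn}. The key algebraic observation is that the $s$-oscillation in each $G_n$ is carried by pure powers of $1\pm s$ whose $\lambda$-dependence is itself a derivative:
\[
(1-s)^{-\frac12+\lambda}=-\frac{1}{\tfrac12+\lambda}\,\partial_s\bigl[(1-s)^{\frac12+\lambda}\bigr],
\qquad
(1+s)^{\frac12-\lambda}=\frac{1}{\tfrac32-\lambda}\,\partial_s\bigl[(1+s)^{\frac32-\lambda}\bigr].
\]
Multiplying by the offending $\lambda$ yields the bounded coefficients $\lambda/(\tfrac12+\lambda)$ or $\lambda/(\tfrac32-\lambda)$, and the integration by parts then transfers a derivative to the remaining (non-oscillatory, symbol-type) factors in $G_n$ and onto $f$. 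Crucially, the shifted exponents $(1\pm s)^{\frac12+\lambda}$ and $(1\pm s)^{\frac32-\lambda}$ produce integrable antiderivatives, and since no differentiation ever lands on an oscillatory factor, the $\omega$-decay is preserved.

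Concretely, for each $n\in\{1,\ldots,6\}$ I would proceed as follows. First simplify the algebraic prefactor as in the proof of Proposition \ref{prop:Gn}: e.g., $s^2 (1-s^2)^{-\frac12+\lambda}\phi_1(s;\lambda)=s(1-s)^{-\frac12+\lambda}$, and analogously for the $\phi_0$ and $\tilde\phi_1$ pieces, so that a single monomial $(1\pm s)^{\pm\frac12\mp\lambda}$ carries all the $\omega$-oscillation in $s$. Second, apply the antiderivative identity above and integrate by parts in $s$ on the appropriate interval ($[\rho,1]$ for $n\in\{1,2,3\}$ and $[0,\rho]$ for $n\in\{4,5,6\}$). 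The boundary contribution at $s=1$ vanishes because $\Re\lambda\geq 0$ makes $(1-s)^{\frac12+\lambda}\to 0$, while the interior term has the form
\[
\int K_n^{\sharp}(\rho,s;\lambda)\bigl[s^{j} f'(s)+\text{lower order in }f\bigr]\,\d s,
\]
where $K_n^{\sharp}$ has exactly the structure described in Lemma \ref{lem:decomp} but with the $\omega$-exponent of the oscillatory factor shifted upward by one full unit. Third, apply Lemma \ref{lem:osc} verbatim to conclude that $\int_\R e^{\I\omega\tau}K_n^{\sharp}(\rho,s;\I\omega)\d\omega$ satisfies the same pointwise bound as in Proposition \ref{prop:Gn}, possibly with an extra $(1-s)$ weight that only helps. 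Then the argument of Lemma \ref{lem:Tn}, applied to $f'$ in place of $f$, delivers
\[
\|\dot T_n(\cdot)f\|_{L^p(\R_+)L^q(\B^3)}\lesssim \|f'\|_{L^2(\B^3)}+\|f\|_{L^2(\B^3)}\lesssim \|f\|_{H^1(\B^3)}.
\]

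The principal obstacle is bookkeeping around the non-endpoint boundary of the $s$-integration and the singular factor $s^{-1}$ in $\phi_0$. For $n\in\{1,2,3\}$ the boundary term at $s=\rho$ is harmless (it is absorbed into a term of the type already handled in Lemma \ref{lem:Tn} after exploiting the $(1-\rho)^{\frac12+\lambda}$ weight and the cut-off $\chi(\rho\langle\omega\rangle)$), but for $n=4$ the $\phi_0(s;\lambda)$ factor must first be unfolded using the representation \eqref{eq:phi0decomp} so that the $s^{-1}$ singularity is explicitly removed \emph{before} integrating by parts; otherwise, a derivative landing on $s^{-1}$ would produce a non-integrable $s^{-2}$ factor near $s=0$. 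With this ordering, the boundary contribution at $s=0$ involves $f(0)$, which by Sobolev embedding is controlled by $\|f\|_{H^1(\B^3)}$, and the whole scheme goes through uniformly for all six kernels.
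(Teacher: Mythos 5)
Your overall strategy coincides with the paper's: integrate by parts in $s$, using $(1-s)^{-\frac12+\lambda}=-\tfrac{2}{1+2\lambda}\partial_s(1-s)^{\frac12+\lambda}$, to convert the extra factor $\lambda$ into a bounded coefficient, treat the resulting interior integrals as $T_n$-type operators, and control the boundary contributions separately. The genuine problems lie in the boundary terms, which is precisely where the paper's proof does its real work. For $n\in\{1,2,3\}$ the boundary term at $s=\rho$ is \emph{not} ``of the type already handled in Lemma \ref{lem:Tn}'': it is a multiplication operator of the form $\rho f(\rho)$ times an oscillatory $\omega$-integral, and one must establish a pointwise $\langle\tau\rangle^{-2}$ bound on that integral (via \eqref{eq:phi0decomp} and Lemma \ref{lem:osc} when the cut-off $\chi(\rho\langle\omega\rangle)$ is present, and via two integrations by parts in $\omega$, exploiting the interplay of $\rho^{-1}$ with $1-\chi(\rho\langle\omega\rangle)$, for $n=2,3$), and then invoke the radial estimate $|\rho f(\rho)|\lesssim\|f\|_{H^1(\B^3)}$. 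You never state this estimate, and the only pointwise claim of this kind you do make is false: radial $H^1(\B^3)$ functions need not be bounded at the origin, so $|f(0)|$ is \emph{not} controlled by $\|f\|_{H^1(\B^3)}$ (only $\rho^{1/2}|f(\rho)|$ is); had your scheme really produced a boundary term proportional to $f(0)$, it could not be closed this way.

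Your reorganization of the case $n=4$ is also off. The premise that the $s^{-1}$ singularity of $\phi_0(s;\lambda)$ must be removed via \eqref{eq:phi0decomp} \emph{before} integrating by parts is mistaken: the prefactor $s^2(1-s^2)^{-\frac12+\lambda}$ already cancels it, leaving the regular combination $s[(1-s)^{-\frac12+\lambda}-(1+s)^{-\frac12+\lambda}]$. Worse, after unfolding, the $s$-oscillation is carried by the product $(1-s^2)^{-\frac12+\lambda}(1\pm st)^{-\frac12-\lambda}$, so it is no longer a single monomial with an explicit antiderivative and your integration-by-parts identities do not apply as stated. The paper instead integrates by parts against the antiderivative $(1-s)^{\frac12+\lambda}+(1+s)^{\frac12+\lambda}-2$, which vanishes at $s=0$ (so no $f(0)$ term arises at all) and which, via the identity \eqref{eq:decomp2}, carries an extra factor of $s$ that compensates the factor $\langle\omega\rangle$ produced when $\partial_s$ hits the cut-off $\chi(s\langle\omega\rangle)$ --- a term your outline does not account for. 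A smaller inaccuracy: in the interior integral, the pieces where the derivative falls on $s\gamma_n$ lose the $s$-weight, so they are controlled by $\|f\|_{L^2(0,1)}\lesssim\|f\|_{H^1(\B^3)}$ and not by $\|f\|_{L^2(\B^3)}$ as your final display suggests; the $H^1$ bound survives, but the intermediate inequality is not correct as written.
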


\begin{proof}
The $s$-dependent part of $G_n(\rho,s;\lambda)$, for $n\in \{1,2,3\}$, is given by
\[ 1_{\R_+}(s-\rho)s(1-s)^{-\frac12+\lambda}\gamma_n(\rho,s;\lambda). \]
We have
\begin{align*}
\int_0^1 &1_{\R_+}(s-\rho)(1-s)^{-\frac12+\lambda}s\gamma_n(\rho,s;\lambda) f(s)\d s \\
&=-\frac{2}{1+2\lambda}\int_\rho^1 \partial_s (1-s)^{\frac12+\lambda}s\gamma_n(\rho,s;\lambda)f(s)\d s \\
&=\frac{2}{1+2\lambda}(1-\rho)^{\frac12+\lambda}\rho \gamma_n(\rho,\rho;\lambda)f(\rho)
+\frac{2}{1+2\lambda}\int_\rho^1 (1-s)^{\frac12+\lambda}\partial_s[s\gamma_n(\rho,s;\lambda)f(s)]\d s.
\end{align*}
From Lemma \ref{lem:decomp} we recall
\[ \gamma_n(\rho,s;\lambda)=\O(\rho^0 s^0)\Oo(\langle\omega\rangle^{-1})
+\O(\rho^0(1-\rho)^0s^0(1-s)^0\langle\omega\rangle^{-2}) \]
and thus, $s(1-s)\partial_s \gamma_n(\rho,s;\lambda)$ is of the same form as $\gamma_n(\rho,s;\lambda)$.
Furthermore, $\|f\|_{L^2(0,1)}\lesssim \|f\|_{H^1(\B^3)}$ and therefore, it suffices to consider the operators
induced by the boundary term, i.e.,
\begin{align*} 
B_{1,\epsilon}(\tau)f(\rho)&:=\frac{2\rho f(\rho)}{2\pi \I}\lim_{N\to\infty}\int_{\epsilon-\I N}^{\epsilon+ \I N}
\frac{\lambda}{1-4\lambda^2} e^{\lambda\tau}\chi(\rho\langle\omega\rangle)\phi_0(\rho;\lambda)
(1-\rho)^{\frac12+\lambda}\gamma_1(\rho,\rho;\lambda)\d\lambda \\
B_{2,\epsilon}(\tau)f(\rho)&:=\frac{2\rho f(\rho)}{2\pi \I}\lim_{N\to\infty}\int_{\epsilon-\I N}^{\epsilon+ \I N}
\frac{\lambda}{1-4\lambda^2} e^{\lambda\tau} \\
&\quad \times [1-\chi(\rho\langle\omega\rangle)]\phi_1(\rho;\lambda)
(1-\rho)^{\frac12+\lambda}\gamma_2(\rho,\rho;\lambda)\d\lambda \\
B_{3,\epsilon}(\tau)f(\rho)&:=\frac{2\rho f(\rho)}{2\pi \I}\lim_{N\to\infty}\int_{\epsilon-\I N}^{\epsilon+ \I N}
\frac{\lambda}{1-4\lambda^2} e^{\lambda\tau} \\
&\quad \times [1-\chi(\rho\langle\omega\rangle)]\tilde \phi_1(\rho;\lambda)
(1-\rho)^{\frac12+\lambda}\gamma_3(\rho,\rho;\lambda)\d\lambda
\end{align*}
where $\lambda=\epsilon+\I\omega$.
The integrands are bounded by $C\rho^{-1}\langle\omega\rangle^{-2}$ and thus, the limits
\[ B_n(\tau)f(\rho):=\lim_{\epsilon\to 0+}B_{n,\epsilon}(\tau)f(\rho) \] exist for $n\in \{1,2,3\}$.
Explicitly, we have
\begin{align*}
 B_1(\tau)f(\rho)=\frac{\rho f(\rho)}{\pi}\int_\R \frac{\I\omega}{1+4\omega^2}e^{\I\omega\tau}
 \chi(\rho\langle\omega\rangle)\phi_0(\rho;\I\omega)
(1-\rho)^{\frac12+\I\omega}\gamma_1(\rho,\rho;\I\omega)\d\omega
 \end{align*}
 and by using Eq.~\eqref{eq:phi0decomp}, we infer from Lemma \ref{lem:osc} the bound
 \[ |B_1(\tau)f(\rho)|\lesssim \langle\tau\rangle^{-2}|\rho f(\rho)|\lesssim \langle\tau\rangle^{-2}\|f\|_{H^1(\B^3)}, \]
 cf.~the proof of Proposition \ref{prop:Gn}.
Similarly, for $B_2$ we obtain
\begin{align*} 
B_2(\tau)f(\rho)
&=\frac{\rho f(\rho)}{\pi}\int_\R e^{\I\omega\tau}
 [1-\chi(\rho\langle\omega\rangle)]\rho^{-1}(1+\rho)^{\frac12-\I\omega} \\
&\quad\times (1-\rho)^{\frac12+\I\omega}\O(\rho^0(1-\rho)^0\langle\omega\rangle^{-2})\d\omega 
\end{align*}
and as in the estimate for $G_2$ in the proof of Proposition \ref{prop:Gn}, two integrations by parts yield
\[ |B_2(\tau)f(\rho)|\lesssim (1-\rho)^\frac12 \langle\tau+\log(1-\rho)\rangle^{-2}|\rho f(\rho)|
\lesssim \langle\tau\rangle^{-2}\|f\|_{H^1(\B^3)}. \]
For $B_3$ we infer
\[ B_3(\tau)f(\rho)=\frac{\rho f(\rho)}{\pi}\int_\R e^{\I\omega\tau}
 [1-\chi(\rho\langle\omega\rangle)]\rho^{-1}(1-\rho)\O(\rho^0(1-\rho)^0\langle\omega\rangle^{-2})\d\omega \]
 and as before, we obtain $|B_3(\tau)f(\rho)|\lesssim \langle\tau\rangle^{-2}\|f\|_{H^1(\B^3)}$.
 
 Next, we turn to the cases $n\in \{4,5,6\}$.
 The $s$-dependent part of $G_4$ is given by
 \[ 1_{\R_+}(\rho-s)\chi(s\langle\omega\rangle)s[(1-s)^{-\frac12+\lambda}-(1+s)^{-\frac12+\lambda}]
 \gamma_4(\rho,s;\lambda). \]
 An integration by parts yields
 \begin{align*}
 \int_0^1 &1_{\R_+}(\rho-s)\chi(s\langle\omega\rangle)s[(1-s)^{-\frac12+\lambda}-(1+s)^{-\frac12+\lambda}]
 \gamma_4(\rho,s;\lambda) f(s)\d s \\
 &=-\frac{2}{1+2\lambda}\int_0^\rho \partial_s[(1-s)^{\frac12+\lambda}+(1+s)^{\frac12+\lambda}-2]
 \chi(s\langle\omega\rangle)s\gamma_4(\rho,s;\lambda)f(s)\d s \\
 &=-\frac{2}{1+2\lambda}[(1-\rho)^{\frac12+\lambda}+(1+\rho)^{\frac12+\lambda}-2]\chi(\rho\langle\omega\rangle)
 \gamma_4(\rho,\rho;\lambda)\rho f(\rho) \\
 &\quad +\frac{2}{1+2\lambda}\int_0^\rho [(1-s)^{\frac12+\lambda}+(1+s)^{\frac12+\lambda}-2]
 \partial_s[\chi(s\langle\omega\rangle)s\gamma_4(\rho,s;\lambda)f(s)]\d s.
 \end{align*}
 Recall from Lemma \ref{lem:decomp} that
 \[ \gamma_4(\rho,s;\lambda)=\O(\rho^0s^0)\Oo(\langle\omega\rangle^{-1})+\O((1-\rho)^0s^0\langle\omega\rangle^{-2})
 +\O(\rho^0 (1-s)\langle\omega\rangle^{-2}) \]
 and thus, $s\partial_s\gamma_4(\rho,s;\lambda)$ is of the same form as $\gamma_4(\rho,s;\lambda)$.
Consequently, since
 \begin{equation}
 \label{eq:decomp2}
  (1-s)^{\frac12+\lambda}+(1+s)^{\frac12+\lambda}-2=-\tfrac12(1+2\lambda)s\int_0^1 [(1-st)^{-\frac12+\lambda}
 -(1+st)^{-\frac12+\lambda}]\d t 
 \end{equation}
 and $\|f\|_{L^2(0,1)}\lesssim \|f\|_{H^1(\B^3)}$, the integral term leads to operators
 that can be handled as $T_4(\tau)$.
Thus, it suffices to consider the operator generated by the boundary term, i.e.,
\begin{align*} 
B_{4,\epsilon}(\tau)f(\rho)&:=\frac{2\rho f(\rho)}{2\pi \I}\lim_{N\to\infty}
\int_{\epsilon-\I N}^{\epsilon+\I N} \frac{\lambda}{1-4\lambda^2}e^{\lambda\tau} \\
&\quad \times \chi(\rho\langle\omega\rangle)[(1-\rho)^{\frac12+\lambda}+(1+\rho)^{\frac12+\lambda}-2]\phi_1(\rho;\lambda)
 \gamma_4(\rho,\rho;\lambda)\d\lambda.
\end{align*}
Taking the limit $\epsilon\to 0+$
yields the operator
\begin{align*} 
B_4(\tau)f(\rho)&:=\lim_{\epsilon\to 0+}B_{4,\epsilon}(\tau)f(\rho)=\frac{\rho f(\rho)}{\pi}\int_\R
\frac{\I\omega}{1+4\omega^2}e^{\I\omega\tau} \\
&\times \chi(\rho\langle\omega\rangle)[(1-\rho)^{\frac12+\I\omega}+(1+\rho)^{\frac12+\I\omega}-2]\phi_1(\rho;\I\omega)
 \gamma_4(\rho,\rho;\I\omega)\d\omega
\end{align*}
and by using Eq.~\eqref{eq:decomp2}, Lemma \ref{lem:osc} yields the bound
\[ |B_4(\tau)f(\rho)|\lesssim \langle\tau\rangle^{-2}|\rho f(\rho)|\lesssim \langle\tau\rangle^{-2}\|f\|_{H^1(\B^3)}. \]
For $n\in \{5,6\}$, the $s$-dependent part of $G_n$ reads
\[ 1_{\R_+}(\rho-s)[1-\chi(s\langle\omega\rangle)]s(1\pm s)^{-\frac12+\lambda}\gamma_n(\rho,s;\lambda) \]
and we proceed analogously to the above by an integration by parts to obtain the 
desired bound.
\end{proof}

\subsection{Proof of Theorem \ref{thm:strich}}
From Eq.~\eqref{eq:decomp} we obtain
\[ [\mb S(\tau)\tilde{\mb f}]_1=[\mb S_0(\tau)\tilde{\mb f}]_1
+\sum_{n=1}^6 \left [ T_n(\tau)(|\cdot|\tilde f_1'+\tfrac32 \tilde f_1+\tilde f_2)
+\dot T_n(\tau)\tilde f_1 \right ] \]
and from Propositions \ref{prop:strich}, \ref{prop:dotTn} and Lemma \ref{lem:Tn} we obtain the
homogeneous Strichartz estimates of Theorem \ref{thm:strich}, provided $\tilde{\mb f}\in C^2\times C^1([0,1])$.
The inhomogeneous Strichartz estimates can be deduced from the homogeneous ones by
the same argument as in the proof of Proposition \ref{prop:strich}.
Thus, a density argument finishes the proof of Theorem \ref{thm:strich}.

\section{Improved energy bound}

\noindent We also need to improve the energy estimate from Proposition \ref{prop:SG}.
Our goal is to show that the constant $C_\epsilon$ is in fact independent of $\epsilon$
which allows us to choose $\epsilon=0$.

\subsection{Preliminaries}

In the sequel we will encounter the following problem: We would like to interchange
a derivative or a limit with an integral but the resulting expression does not converge absolutely.
Consequently, the dominated convergence theorem does not apply.
However, instead of proving a general result that covers each and every case that occurs, 
we rather illustrate the logic on a typical example.

\begin{lemma}
\label{lem:oscder}
Let $f(\omega)=\O(\langle\omega\rangle^{-2})$. Then we have
\[ \partial_a \int_\R e^{\I a \omega}f(\omega)\d\omega=
\I \int_\R \omega e^{\I a\omega}f(\omega)\d\omega
 \]
for all $a\in \R\backslash \{0\}$.
\end{lemma}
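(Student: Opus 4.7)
The plan is to prove the identity by a cutoff-and-limit argument: first reducing to a compactly supported integrand where differentiation under the integral is elementary, then integrating by parts to exchange the factor $\omega$ for a factor $\tfrac{1}{a}$, and finally passing to the limit. The main obstacle is that the right-hand side $\I\int_\R\omega e^{\I a\omega}f(\omega)\,\d\omega$ is not absolutely convergent, since $\omega f(\omega)=\O(\langle\omega\rangle^{-1})$; it must therefore be interpreted as an oscillatory integral defined via integration by parts, and the justification of the interchange cannot proceed by dominated convergence applied naively to the right-hand side.

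First I would fix a bump $\chi\in C_c^\infty(\R)$ equal to $1$ near the origin, set $\chi_R(\omega):=\chi(\omega/R)$, and introduce
\[ F_R(a):=\int_\R e^{\I a\omega}\chi_R(\omega)f(\omega)\,\d\omega,\qquad F(a):=\int_\R e^{\I a\omega}f(\omega)\,\d\omega. \]
Since $\chi_R f$ is compactly supported and smooth (the symbol bounds in Definition \ref{def:O} give $f\in C^\infty(\R)$), the classical differentiation-under-the-integral lemma yields $F_R'(a)=\I\int_\R\omega e^{\I a\omega}\chi_R(\omega)f(\omega)\,\d\omega$. For $a\neq 0$, one integration by parts using $e^{\I a\omega}=(\I a)^{-1}\partial_\omega e^{\I a\omega}$ converts this into the absolutely convergent expression
\[ F_R'(a)=-\frac{1}{a}\int_\R e^{\I a\omega}\,\partial_\omega[\omega\chi_R(\omega)f(\omega)]\,\d\omega. \]

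Next I would expand the derivative by the Leibniz rule into three pieces. The pieces $\chi_R(\omega)f(\omega)$ and $\omega\chi_R(\omega)f'(\omega)$ are dominated uniformly in $R$ by a fixed $L^1(\R)$ majorant of order $\langle\omega\rangle^{-2}$ (using $f'(\omega)=\O(\langle\omega\rangle^{-3})$), so dominated convergence applies to each. The remaining cutoff-derivative piece $\omega\chi_R'(\omega)f(\omega)$ is supported on $|\omega|\sim R$ with $|\omega\chi_R'(\omega)|\lesssim 1$ and $|f(\omega)|\lesssim R^{-2}$, so its $L^1$-norm is $\O(R^{-1})$ and it vanishes as $R\to\infty$. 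It follows that
\[ F_R'(a)\longrightarrow -\frac{1}{a}\int_\R e^{\I a\omega}\,\partial_\omega[\omega f(\omega)]\,\d\omega=:G(a), \]
and the convergence is uniform on $\{|a|\ge\delta\}$ for every $\delta>0$, because the bounds just described do not depend on $a$ (modulo a harmless factor $1/|a|\le 1/\delta$). Simultaneously $F_R(a)\to F(a)$ pointwise by dominated convergence with the majorant $C\langle\omega\rangle^{-2}$.

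To conclude I would invoke the standard real-analysis fact that pointwise convergence $F_R\to F$ together with locally uniform convergence $F_R'\to G$ on $\R\setminus\{0\}$ forces $F\in C^1(\R\setminus\{0\})$ with $F'=G$. Undoing the integration by parts on $G$ reinterprets $G(a)$ precisely as the oscillatory integral $\I\int_\R\omega e^{\I a\omega}f(\omega)\,\d\omega$ in the same integration-by-parts sense in which that symbol is used throughout the paper, which is the asserted identity. The only step requiring genuine care is the treatment of the cutoff-derivative contribution $\omega\chi_R'(\omega)f(\omega)$; all other steps are routine once the integration by parts has produced an absolutely convergent kernel.
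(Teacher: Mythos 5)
Your argument is correct: the cutoff term $\omega\chi_R'(\omega)f(\omega)$ is indeed the only delicate piece, and your estimate $\|\omega\chi_R'f\|_{L^1}=O(R^{-1})$ together with the dominated-convergence treatment of $\chi_Rf$ and $\omega\chi_Rf'$ (majorants of order $\langle\omega\rangle^{-2}$, using $f'=\O(\langle\omega\rangle^{-3})$) and the classical theorem on locally uniform convergence of derivatives gives $F'=G$ on $\R\setminus\{0\}$; undoing the integration by parts then identifies $G(a)$ with the improper integral $\I\int_\R\omega e^{\I a\omega}f(\omega)\,\d\omega$ (the boundary terms at $\pm N$ are $O(N^{-1})$), which is the sense in which the right-hand side must be read in any case. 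The paper reaches the same conclusion by a shorter route that avoids the cutoff entirely: it first integrates by parts on the \emph{undifferentiated} integral, writing $\int_\R e^{\I a\omega}f(\omega)\,\d\omega=-\tfrac{1}{\I a}\int_\R e^{\I a\omega}f'(\omega)\,\d\omega$, where the new integrand decays like $\langle\omega\rangle^{-3}$; then differentiation in $a$ of this product is justified directly by dominated convergence (the $a$-derivative of the integrand is $O(\langle\omega\rangle^{-2})$), producing $\tfrac{1}{\I a^2}\int e^{\I a\omega}f'$ and $-\tfrac{1}{a}\int\omega e^{\I a\omega}f'$, which recombine to the claimed identity. So the two proofs share the same mechanism (trade the factor $\omega$ for $1/a$ via integration by parts, then use absolute convergence), but differ in the order of operations: the paper's ordering lets the standard differentiation-under-the-integral lemma apply at once, while yours differentiates a truncated integral first and then needs the extra limiting step $F_R\to F$, $F_R'\to G$; the payoff of your version is that it makes the interpretation of the non-absolutely-convergent right-hand side completely explicit.
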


\begin{proof}
An integration by parts yields
\[ \int_\R e^{\I a\omega}f(\omega)\d\omega=-\frac{1}{\I a}\int_\R e^{\I a\omega}f'(\omega)\d\omega. \]
The integrand of the latter integral decays like $\langle\omega\rangle^{-3}$ and thus,
by dominated convergence we infer
\begin{align*}
 \partial_a \int_\R e^{\I a\omega}f(\omega)\d\omega&=\frac{1}{\I a^2}\int_\R e^{\I a\omega}f'(\omega)\d\omega
-\frac{1}{a}\int_\R \omega e^{\I a\omega}f'(\omega)\d\omega \\
&=\I\int_\R \omega e^{\I a\omega}f(\omega)\d\omega.
\end{align*}
\end{proof}

\subsection{Decomposition}
Now let $\tilde{\mb f}\in \rg(\mb I-\mb P)\cap C^2\times C^1([0,1])$.
From Eq.~\eqref{eq:SG} and (a suitable variant of) Lemma \ref{lem:oscder}, we obtain the representation
\begin{equation}
\label{eq:SG'1} \partial_\rho [\mb S(\tau)\tilde{\mb f}]_1(\rho)=\frac{1}{2\pi \I}\lim_{N\to\infty}
\int_{\epsilon-\I N}^{\epsilon+\I N}e^{\lambda\tau}\int_0^1 G'(\rho,s;\lambda)F_\lambda(s)\d\lambda, 
\end{equation}
where
\begin{equation} 
\label{eq:G'}
G'(\rho,s;\lambda):=\frac{s^2(1-s^2)^{-\frac12+\lambda}}{(1-2\lambda)w_0(\lambda)}
\left \{ 
\begin{array}{l}
\partial_\rho u_0(\rho;\lambda)u_1(s;\lambda)\mbox{ if }\rho\leq s \\
\partial_\rho u_1(\rho;\lambda)u_0(s;\lambda)\mbox{ if }\rho \geq s
\end{array} \right . 
\end{equation} 
and, as always, $F_\lambda(s)=s\tilde f_1'(s)+(\lambda+\frac32)\tilde f_1(s)+\tilde f_2(s)$.
To be more precise, one needs to perform an integration by parts with respect to $s$, thereby
exploiting the assumed differentiability of $\tilde{\mb f}$, in order to generate 
a factor $\langle\lambda\rangle^{-1}$ which yields enough
decay to make sense of the above representation formula.

As in Lemma \ref{lem:decomp}, we peel off the free part
\[ G_0'(\rho,s;\lambda):=\frac{s^2(1-s^2)^{-\frac12+\lambda}}{1-2\lambda}
\left \{ 
\begin{array}{l}
\partial_\rho \phi_0(\rho;\lambda)\phi_1(s;\lambda)\mbox{ if }\rho\leq s \\
\partial_\rho \phi_1(\rho;\lambda)\phi_0(s;\lambda)\mbox{ if }\rho \geq s
\end{array} \right . .
\]
Note that the contribution from $G_0'$ is already handled by the abstract theory, see Proposition \ref{prop:gen}.

\begin{lemma}
\label{lem:decompG'}
We have the decomposition 
\[ G'(\rho,s;\lambda)=G_0'(\rho,s;\lambda)+\sum_{n=1}^6 \rho^{-1}
\tilde G_n(\rho,s;\lambda)+\sum_{n=1}^4 G_n'(\rho,s;\lambda) \]
where
\begin{align*}
G_1'(\rho,s;\lambda)&=1_{\R_+}(s-\rho)s^2(1-s^2)^{-\frac12+\lambda}
\phi_1(\rho;\lambda)\phi_1(s;\lambda)\gamma_1'(\rho,s;\lambda) \\
G_2'(\rho,s;\lambda)&=1_{\R_+}(s-\rho)s^2(1-s^2)^{-\frac12+\lambda}(1-\rho)^{-1}
\tilde \phi_1(\rho;\lambda)\phi_1(s;\lambda)\gamma_2'(\rho,s;\lambda) \\
G_3'(\rho,s;\lambda)&=1_{\R_+}(\rho-s)s^2(1-s^2)^{-\frac12+\lambda}
\phi_1(\rho;\lambda)\phi_1(s;\lambda)\gamma_3'(\rho,s;\lambda) \\
G_4'(\rho,s;\lambda)&=1_{\R_+}(\rho-s)s^2(1-s^2)^{-\frac12+\lambda}
\phi_1(\rho;\lambda)\tilde \phi_1(s;\lambda)\gamma_4'(\rho,s;\lambda) 
\end{align*}
with 
\[ \gamma_n'(\rho,s;\lambda)=\O(\rho^0 s^0)\Oo(\langle\omega\rangle^{-1})
+\O((1-\rho)^0s^0\langle\omega\rangle^{-2})+\O(\rho^0(1-s)\langle\omega\rangle^{-2}) \]
and $\tilde G_n$ is of the same form as $G_n$ from Lemma \ref{lem:decomp}. 
\end{lemma}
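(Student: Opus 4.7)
The plan is to apply the product rule to $u_j=\rho^{-1}(1-\rho^2)^{-1/4-\lambda/2}v_j$, thereby producing
\[
\partial_\rho u_j(\rho;\lambda)=-\rho^{-1}u_j(\rho;\lambda)+\tfrac{(1/2+\lambda)\rho}{1-\rho^2}u_j(\rho;\lambda)+\rho^{-1}(1-\rho^2)^{-1/4-\lambda/2}v_j'(\rho;\lambda),
\]
and then to insert this identity into the formula \eqref{eq:G'} for $G'$. The first term reproduces $-\rho^{-1}G(\rho,s;\lambda)$, which by Lemma \ref{lem:decomp} equals $-\rho^{-1}G_0-\sum_{n=1}^6\rho^{-1}G_n$. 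Setting $\tilde G_n:=-G_n$ (which is manifestly of the same form as $G_n$) takes care of the six pieces $\rho^{-1}\tilde G_n$ in the claimed decomposition, and it remains to identify the contribution of the two other summands of $\partial_\rho u_j$ with $G_0'+\sum_{n=1}^4 G_n'+\rho^{-1}G_0$.

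The analogous product rule identity applies to $\phi_j=\rho^{-1}(1-\rho^2)^{-1/4-\lambda/2}\psi_j$, so that after substituting $v_j=\psi_j+(v_j-\psi_j)$ and $v_j'=\psi_j'+(v_j'-\psi_j')$ in the two remaining summands, the leading-order ``free'' part exactly reconstructs $G_0'+\rho^{-1}G_0$ once the identity $w_0(\lambda)^{-1}=1+$ correction from Corollary \ref{cor:W} is used. Combined with the $-\rho^{-1}G_0$ already peeled off, this yields $G_0'$.

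The remaining perturbative contributions come from $v_j-\psi_j$, $v_j'-\psi_j'$, and $w_0(\lambda)^{-1}-1$, whose asymptotic expansions of symbol type are provided by Lemmas \ref{lem:v1}, \ref{lem:v0}, \ref{lem:v0v1} and Corollary \ref{cor:W}. Differentiation of a symbol of the form $\O(\rho^\alpha s^\beta\langle\omega\rangle^\gamma)$ preserves the symbol class up to the expected loss of one power of the relevant variable. On the side $s\ge\rho$ the two subregions of the $\chi(\rho\langle\omega\rangle)$-partition yield, after differentiation, the $\phi_1\phi_1$-structure of $G_1'$ and the $\tilde\phi_1\phi_1$-structure of $G_2'$; the singular prefactor $(1-\rho)^{-1}$ in $G_2'$ is produced by the identity
\[
\partial_\rho\tilde\phi_1(\rho;\lambda)=-\rho^{-1}\tilde\phi_1(\rho;\lambda)+(\lambda-\tfrac12)(1-\rho)^{-1}\tilde\phi_1(\rho;\lambda).
\]
The mirror analysis on $\rho\ge s$ using the $\chi(s\langle\omega\rangle)$-partition produces $G_3'$ and $G_4'$.

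The main obstacle is the bookkeeping of factors of $\lambda$: every $\rho$-derivative of $(1\pm\rho)^{1/2-\lambda}$ or of the prefactor $(1/2+\lambda)/(1-\rho^2)$ introduces a $\lambda$, and to ensure that $\gamma_n'$ lies in the advertised symbol class
\[
\O(\rho^0 s^0)\Oo(\langle\omega\rangle^{-1})+\O((1-\rho)^0 s^0\langle\omega\rangle^{-2})+\O(\rho^0(1-s)\langle\omega\rangle^{-2}),
\]
every such $\lambda$ must either recombine with a decaying factor from $w_0(\lambda)^{-1}=1+\Oo(\langle\omega\rangle^{-1})+\O(\langle\omega\rangle^{-2})$, or be matched by a retained $(1-\rho)^{-1}$ inside $G_2'$ or a $(1-s)$ factor from the third summand of $\gamma_n'$. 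Once this $\lambda$-accounting is done, the remaining algebra is parallel to that in the proof of Lemma \ref{lem:decomp}.
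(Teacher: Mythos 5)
Your strategy is in essence the paper's: apply the product rule to the representations of $u_0$, $u_1$ furnished by Lemmas \ref{lem:v1}, \ref{lem:v0}, \ref{lem:v0v1} together with Corollary \ref{cor:W}, use the explicit identities for $\partial_\rho\phi_1$, $\partial_\rho\tilde\phi_1$, $\partial_\rho\phi_0$ (your formula for $\partial_\rho\tilde\phi_1$, and hence the origin of the $(1-\rho)^{-1}$ in $G_2'$, is exactly right), let the symbol property of the $\O$-classes absorb the differentiated error terms, and make every loose factor of $\lambda$ either cancel against the prefactor $\tfrac{1}{(1-2\lambda)w_0(\lambda)}$ or stay attached to an admissible weight. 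The only genuinely different organizational move is peeling off $-\rho^{-1}u_j$ first and invoking Lemma \ref{lem:decomp} for that piece, whereas the paper differentiates $u_1=\phi_1[1+a]$ and the two cutoff pieces of $u_0$ directly; this is a harmless repackaging.

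One identification is, however, too rigid as stated: you fix $\tilde G_n:=-G_n$ and claim the two remaining summands of your product rule reduce \emph{exactly} to $G_0'+\rho^{-1}G_0+\sum_{n=1}^4 G_n'$. They do not, because those summands still produce corrections that live in the $\rho^{-1}\tilde G_n$ buckets rather than in the $G_n'$ buckets. A concrete instance: on the support of $\chi(\rho\langle\omega\rangle)$ one has $v_0=\psi_0h_0$ with $h_0=1+\O(\rho^2\langle\omega\rangle^0)$, and your third summand contains $\phi_0(\rho;\lambda)\,\partial_\rho h_0(\rho;\lambda)=\rho^{-1}\phi_0(\rho;\lambda)\O(\rho^0\langle\omega\rangle^{-2})$ after using $\rho\lesssim\langle\omega\rangle^{-1}$. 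After division by $(1-2\lambda)w_0(\lambda)$ this is precisely of the form $\rho^{-1}\times(\text{$G_1$-form with its explicit cutoff }\chi(\rho\langle\omega\rangle))$; it is not odd-separable in $\omega$, and because the cutoff loses a factor $\langle\omega\rangle$ per $\rho$-derivative it does not sit in the cutoff-free classes $\gamma_1'$, $\gamma_2'$ advertised for $G_1'$, $G_2'$ (the same happens with the $s$-side inner-region corrections on $\rho\geq s$). This is exactly why the lemma only asserts that $\tilde G_n$ is \emph{of the same form} as $G_n$: the $\tilde G_n$ must collect $-G_n$ \emph{together with} these additional same-form contributions. The repair is purely a matter of bookkeeping and does not affect the rest of your argument, but the $\lambda$- and cutoff-accounting that you defer is precisely where this has to be noticed, and it is also where the bulk of the paper's proof lives.
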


\begin{proof}
By Lemma \ref{lem:v1} and Eq.~\eqref{eq:utov} we have 
\[ u_1(\rho;\lambda)=\phi_1(\rho;\lambda)[1+a(\rho;\lambda)] \] where 
$a(\rho;\lambda)=a_1(\rho)\Oo(\langle\omega\rangle^{-1})+\O((1-\rho)\langle\omega\rangle^{-2})$
and $\phi_1(\rho;\lambda)=\rho^{-1}(1+\rho)^{\frac12-\lambda}$.
Consequently, we infer
\begin{align*}
\partial_\rho u_1(\rho;\lambda)&=\partial_\rho \phi_1(\rho;\lambda)[1+a(\rho;\lambda)]
+\phi_1(\rho;\lambda)\partial_\rho a(\rho;\lambda) \\
&=\partial_\rho\phi_1(\rho;\lambda)-\rho^{-1}\phi_1(\rho;\lambda)a(\rho;\lambda)
+(\tfrac12-\lambda)(1+\rho)^{-1}\phi_1(\rho;\lambda)a(\rho;\lambda) \\
&\quad +\phi_1(\rho;\lambda)\partial_\rho a(\rho;\lambda) \\
&=\partial_\rho \phi_1(\rho;\lambda)+\rho^{-1}\phi_1(\rho;\lambda)[\O(\rho^0)\Oo(\langle\omega\rangle^{-1})
+\O((1-\rho)^0\langle\omega\rangle^{-2})] \\
&\quad +\tfrac12(1-2\lambda)\phi_1(\rho;\lambda)[\O(\rho^0)\Oo(\langle\omega\rangle^{-1})
+\O((1-\rho)^0\langle\omega\rangle^{-2})].
\end{align*}
Analogously, by Lemma \ref{lem:v0},
\begin{align*}
\chi(\rho\langle\omega\rangle)\partial_\rho u_0(\rho;\lambda)&=
\chi(\rho\langle\omega\rangle)\left [\partial_\rho\phi_0(\rho;\lambda)
+\partial_\rho\phi_0(\rho;\lambda)\O(\rho^2\langle\omega\rangle^0)+\phi_0(\rho;\lambda)\O(\rho\langle\omega\rangle^0)
\right ] 
\end{align*}
and by using
\[ \partial_\rho \phi_0(\rho;\lambda)=-\rho^{-1}\phi_0(\rho;\lambda)+\tfrac12(1-2\lambda)
[(1+\rho)^{-1}\phi_1(\rho;\lambda)+(1-\rho)^{-1}\tilde\phi_1(\rho;\lambda)], \]
we find
\begin{align*}
\chi(\rho\langle\omega\rangle)\partial_\rho u_0(\rho;\lambda)&=
\chi(\rho\langle\omega\rangle)\partial_\rho \phi_0(\rho;\lambda)
+\chi(\rho\langle\omega\rangle)\rho^{-1}\phi_0(\rho;\lambda)\O(\rho^0\langle\omega\rangle^{-2}) \\
&\quad +\chi(\rho\langle\omega\rangle)(1-2\lambda)\phi_1(\rho;\lambda)\O(\rho^0\langle\omega\rangle^{-2}) \\
&\quad +\chi(\rho\langle\omega\rangle)(1-2\lambda)\tilde \phi_1(\rho;\lambda)\O(\rho^0\langle\omega\rangle^{-2}) .
\end{align*}
Finally, by Lemma \ref{lem:v0v1},
\begin{align*}
[1-\chi(\rho\langle\omega\rangle)]\partial_\rho u_0(\rho;\lambda)&=
[1-\chi(\rho\langle\omega\rangle)]\big \{
\partial_\rho \phi_1(\rho;\lambda)[1+b_1(\rho;\lambda)]+\phi_1(\rho;\lambda)\partial_\rho b_1(\rho;\lambda) \big \} \\
&\quad -[1-\chi(\rho\langle\omega\rangle)]\big \{
\partial_\rho \tilde \phi_1(\rho;\lambda)[1+b_2(\rho;\lambda)]+\tilde \phi_1(\rho;\lambda)
\partial_\rho  b_2(\rho;\lambda) \big \}
\end{align*}
where
\begin{align*}
b_j(\rho;\lambda)&=\O(\rho^0)\Oo(\langle\omega\rangle^{-1})+\O(\langle\omega\rangle^{-2})
+\O((1-\rho)\langle\omega\rangle^{-2}),\qquad j\in \{1,2\}. 
\end{align*}
Thus, we obtain
\begin{align*}
[1-\chi(\rho\langle\omega\rangle)]\partial_\rho u_0(\rho;\lambda)&=
[1-\chi(\rho\langle\omega\rangle)]\partial_\rho \phi_0(\rho;\lambda) \\
&\quad +[1-\chi(\rho\langle\omega\rangle)]\rho^{-1}\phi_1(\rho;\lambda)c_1(\rho;\lambda) \\
&\quad +(1-2\lambda)\phi_1(\rho;\lambda)c_2(\rho;\lambda)\\
&\quad +[1-\chi(\rho\langle\omega\rangle)]\rho^{-1}\tilde \phi_1(\rho;\lambda)c_3(\rho;\lambda) \\
&\quad +(1-2\lambda)(1-\rho)^{-1}\tilde \phi_1(\rho;\lambda)c_4(\rho;\lambda)
\end{align*}
where
\[ c_j(\rho;\lambda)=\O(\rho^0)\Oo(\langle\omega\rangle^{-1})+\O((1-\rho)^0\langle\omega\rangle^{-2}),
\qquad j\in \{1,2,3,4\}. \]
This yields the stated decomposition.
\end{proof}

In view of Lemma \ref{lem:decompG'} we define the operators
\begin{align*} 
\tilde S_{n,\epsilon}(\tau)f(\rho):&=\frac{1}{2\pi \I}\lim_{N\to \infty}\int_{\epsilon-\I N}^{\epsilon+\I N}
e^{\lambda\tau}\int_0^1 \rho^{-1}\tilde G_n(\rho,s;\lambda)f(s)\d s\d\lambda \\
&=\frac{1}{2\pi}\int_\R e^{(\epsilon+\I\omega)\tau}\int_0^1 \rho^{-1}\tilde G_n(\rho,s;\epsilon+\I\omega)f(s)
\d s\d\omega,\qquad n\in \{1,2,\dots,6\}
\end{align*}
and
\begin{align*}
S_{n,\epsilon}'(\tau)f(\rho):=\frac{1}{2\pi}\int_\R e^{(\epsilon+\I\omega)\tau}
\int_0^1 G_n'(\rho,s;\epsilon+\I\omega)f(s)
\d s\d\omega,\qquad n\in \{1,2,3,4\}
\end{align*}
for $\tau\geq 0$, $\rho \in (0,1)$, and $f\in C^1([0,1])$.
From Lemma \ref{lem:epsto0} we obtain
\[ \tilde S_n(\tau)f(\rho):=\lim_{\epsilon\to 0+}\tilde S_{n,\epsilon}(\tau)f(\rho)=\frac{1}{2\pi}\int_0^1 \int_\R e^{\I\omega\tau}
\rho^{-1}\tilde G_n(\rho,s;\I\omega)\d\omega f(s)\d s. \]
A similar result is true for $S_{n,\epsilon}'(\tau)$.

\begin{lemma}
\label{lem:epsto0'}
For $\tau\geq 0$, $\rho\in (0,1)$, $f\in C^1([0,1])$, and $n\in \{1,2,3,4\}$, we have
\begin{align*}
S_n'(\tau)f(\rho):=\lim_{\epsilon\to 0}S_{n,\epsilon}'(\tau)f(\rho)&=
\frac{1}{2\pi}\int_\R e^{\I\omega\tau}\int_0^1 G_n'(\rho,s;\I\omega)f(s)\d s\d\omega \\
&=\frac{1}{2\pi}\int_0^1 \int_\R e^{\I\omega\tau}G_n'(\rho,s;\I\omega)\d\omega f(s)\d s.
\end{align*}
\end{lemma}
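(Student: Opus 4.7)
The plan is to mirror, almost verbatim, the strategy used in the proof of Lemma \ref{lem:epsto0}: establish a pointwise bound on the kernel which, at every fixed $\rho \in (0,1)$, is jointly absolutely integrable in $(s,\omega) \in (0,1) \times \R$ uniformly in $\epsilon$ over a small interval $[0,\epsilon_0]$, and then invoke the dominated convergence theorem together with Fubini--Tonelli to obtain simultaneously the passage $\epsilon \to 0+$ and the interchange of the two integrations.

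For the pointwise bound, I would combine the explicit identities $|\phi_1(\rho;\epsilon+\I\omega)|+|\tilde\phi_1(\rho;\epsilon+\I\omega)|\lesssim \rho^{-1}$, valid uniformly in $\epsilon \in [0,\tfrac13]$ and $\omega \in \R$, with the symbol-type bounds on $\gamma_n'$ from Lemma \ref{lem:decompG'}. This yields, for $n\in\{1,3,4\}$, an estimate of the schematic form
\[ |G_n'(\rho,s;\epsilon+\I\omega)| \lesssim \rho^{-1}\, s\,(1-s)^{-\tfrac12+\epsilon}\langle\omega\rangle^{-2}, \]
with the same bound for $n=2$ apart from an additional factor $(1-\rho)^{-\tfrac12-\epsilon}$ coming from the explicit $(1-\rho)^{-1}\tilde\phi_1(\rho;\lambda)$ prefactor in the definition of $G_2'$. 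At fixed $\rho \in (0,1)$ all $\rho$-dependent factors are harmless finite constants, and for $\epsilon \in [0,\tfrac14]$ we have $(1-s)^{-\tfrac12+\epsilon} \le (1-s)^{-\tfrac12}$, which is integrable on $(0,1)$. Multiplying by $\|f\|_{L^\infty(0,1)}$ and $e^{\epsilon_0 \tau}$ to absorb the exponential prefactor $e^{\epsilon\tau}$ provides an $\epsilon$-independent dominating function that is jointly integrable in $(s,\omega)$.

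Once absolute integrability is in hand, Fubini--Tonelli gives the equality of the two iterated integrals for each $\epsilon \in [0,\epsilon_0]$, while dominated convergence (applied to either iterated form) allows me to send $\epsilon \to 0+$, since $G_n'(\rho,s;\epsilon+\I\omega)\to G_n'(\rho,s;\I\omega)$ pointwise. The main obstacle I anticipate is verifying that the Volterra constructions of Lemmas \ref{lem:v1}--\ref{lem:v0v1} and the resulting factorization producing the symbols $\gamma_n'$ in Lemma \ref{lem:decompG'} do not degenerate as $\epsilon \to 0$; however, inspection of those proofs shows that all implicit constants depend only on the supremum of the symbol bounds over $\epsilon \in [0,\tfrac13]$, so they pass to the boundary value $\epsilon=0$ without change. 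With that point settled, the two stated equalities follow at once.
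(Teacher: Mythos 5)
There is a genuine gap, and it sits exactly at the pointwise bound on which your whole argument rests. You claim $|G_n'(\rho,s;\epsilon+\I\omega)|\lesssim \rho^{-1}s(1-s)^{-\frac12+\epsilon}\langle\omega\rangle^{-2}$ by combining $|\phi_1|+|\tilde\phi_1|\lesssim\rho^{-1}$ with the symbol bounds on $\gamma_n'$. That computation is the one that works for the kernels $G_n$ of Lemma \ref{lem:decomp}, because there the prefactor $\gamma_n/(1-2\lambda)$ supplies a total decay $\langle\omega\rangle^{-2}$. The kernels $G_n'$ of Lemma \ref{lem:decompG'} carry \emph{no} factor $(1-2\lambda)^{-1}$: it is cancelled in the decomposition of $\partial_\rho u_0$ and $\partial_\rho u_1$, where the $\rho$-derivative brings down a factor of size $\langle\lambda\rangle$. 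What remains is only $\gamma_n'(\rho,s;\lambda)=\O(\rho^0s^0)\Oo(\langle\omega\rangle^{-1})+\O(\langle\omega\rangle^{-2})+\dots$, and the leading odd term is, in absolute value, merely $O(\langle\omega\rangle^{-1})$. Oddness helps only through oscillation (Lemma \ref{lem:osc}), not through absolute integrability. Hence there is no $\epsilon$-independent dominating function that is jointly integrable in $(s,\omega)$, and the direct dominated convergence/Fubini--Tonelli argument collapses; indeed, even the absolute convergence of the $\omega$-integral defining $S_{n,\epsilon}'(\tau)$ for $\epsilon>0$ cannot be read off the raw integrand. The "main obstacle" you flag (uniformity of the Volterra constructions as $\epsilon\to0$) is a non-issue, since all symbol bounds are stated uniformly for $\epsilon\in[0,\frac13]$; the real obstacle is the missing power of $\langle\omega\rangle$.

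The hypothesis $f\in C^1([0,1])$ (as opposed to $f\in C([0,1])$ in Lemma \ref{lem:epsto0}) is the hint for the correct route, which is the paper's: integrate by parts in $s$, writing $(1-s)^{-\frac12+\lambda}=-\frac{2}{1+2\lambda}\partial_s(1-s)^{\frac12+\lambda}$ (and the analogous identity for the $(1+s)$-factor), to trade one $s$-derivative of $f$ for a factor $\langle\omega\rangle^{-1}$; after this the integrand decays like $\langle\omega\rangle^{-2}$, dominated convergence lets you send $\epsilon\to 0+$, and a second integration by parts in $s$ removes the derivative from $f$ again, giving the first equality. The interchange of the $s$- and $\omega$-integrations in the second equality must likewise be carried out on a representation with absolutely convergent $\omega$-integral (or by splitting off the odd $\Oo(\langle\omega\rangle^{-1})$ part and using symmetric truncations), not by a plain Fubini--Tonelli on the original integrand.
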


\begin{proof}
The first equality follows by an argument similar to the proof of Lemma \ref{lem:oscder}, i.e.,
one performs an integration by parts with respect to $s$ in order to gain a factor
of $\langle\omega\rangle^{-1}$, which renders the integral absolutely convergent.
Then one may apply dominated convergence to take the limit $\epsilon\to 0+$.
Finally, one performs another integration by parts in $s$ to remove the derivative
from $f$.
The second equality follows by Fubini and dominated convergence.
\end{proof}

\subsection{Kernel bounds}
We prove pointwise bounds for the kernels of the operators $S_n'(\tau)$.

\begin{lemma}
\label{lem:boundsG'}
We have the bounds
\[ \left |\int_\R e^{\I\omega\tau}G_n'(\rho,s;\I\omega)\d\omega \right |\lesssim \rho^{-1}(1-\rho)^{-\frac12}
s(1-s)^{-\frac12}\langle\tau-\log(1-\rho)+\log(1-s)\rangle^{-2} \]
for all $\tau\geq 0$, $\rho,s\in (0,1)$, and $n\in \{1,2,3,4\}$.
\end{lemma}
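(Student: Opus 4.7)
The plan is to treat the four cases $n\in\{1,2,3,4\}$ separately, proceeding in the same spirit as the proof of Proposition \ref{prop:Gn}. For each kernel $G_n'$ from Lemma \ref{lem:decompG'}, I substitute $\phi_1(\rho;\I\omega)=\rho^{-1}(1+\rho)^{1/2-\I\omega}$ and $\tilde\phi_1(\rho;\I\omega)=\rho^{-1}(1-\rho)^{1/2-\I\omega}$ (and their $s$-analogues), combine the powers of $(1\pm\rho)^{\pm 1/2\mp\I\omega}$ and $(1\pm s)^{\pm 1/2\mp\I\omega}$ arising from $(1-s^2)^{-1/2+\I\omega}$ and $\phi_1,\tilde\phi_1$, and isolate a non-oscillatory amplitude $A_n(\rho,s)$ together with an oscillatory factor $e^{\I\omega\Theta_n(\rho,s)}$. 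Applying Lemma \ref{lem:osc} to the $\omega$-integral against $\gamma_n'$ (whose $\omega$-structure is precisely of the form covered by that lemma, with constants uniform in $\rho,s\in(0,1)$) then yields
\[ \Bigl|\int_\R e^{\I\omega\tau}\, G_n'(\rho,s;\I\omega)\,\d\omega\Bigr|\lesssim A_n(\rho,s)\,\langle\tau+\Theta_n(\rho,s)\rangle^{-2}. \]

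A direct computation gives the following. For $G_2'$, the $(1+s)^{\pm(1/2-\I\omega)}$ factors cancel exactly and the explicit prefactor $(1-\rho)^{-1}$ combines with $\tilde\phi_1$ to produce $A_2=\rho^{-1}s(1-s)^{-1/2}(1-\rho)^{-1/2}$ with phase $\Theta_2=\log(1-s)-\log(1-\rho)$, so the stated bound is immediate. For $G_1'$ and $G_3'$, all factors involving $(1+s)$ and $(1\pm\rho)$ combine into quantities bounded on $(0,1)$, leaving $A_n=\rho^{-1}s(1-s)^{-1/2}$ and $\Theta_n=\log(1-s)+O(1)$. For $G_4'$, the $(1-s)$ powers cancel entirely and $(1+s)^{\pm},(1+\rho)^{\pm}$ are bounded, giving $A_4=\rho^{-1}s$ and $\Theta_4=O(1)$.

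It remains to absorb the gap between these raw bounds and the target, whose bracket argument is $\tau-\log(1-\rho)+\log(1-s)$. This reduces to the elementary inequality
\[ \langle t\rangle^{-2}\lesssim e^{x/2}\langle t+x\rangle^{-2},\qquad t\in\R,\ x\ge 0, \]
which I verify by splitting $x\le 1$ (both sides comparable, since $\langle t+x\rangle\asymp\langle t\rangle$) from $x\ge 1$ (where $\langle t+x\rangle^{2}/\langle t\rangle^{2}\le 2+2x^{2}$ is polynomial in $x$ and thus dominated by $e^{x/2}$). Applied with $x=-\log(1-\rho)\ge 0$ and $t$ a bounded perturbation of $\tau+\log(1-s)$ (for $n=1,3$) or of $\tau$ (for $n=4$, where moreover $\rho\ge s$ forces $-\log(1-\rho)+\log(1-s)\ge 0$ so the target bracket already exceeds $\tau$), this converts each raw bound into the stated form. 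The main obstacle is not conceptual but rather careful bookkeeping: keeping track of which powers of $(1\pm\rho)^{\pm 1/2\mp\I\omega}$ and $(1\pm s)^{\pm 1/2\mp\I\omega}$ contribute to $A_n$ versus $\Theta_n$; the inequality above makes clear why the amplitude singularity $(1-\rho)^{-1/2}$ on the right-hand side of the target estimate is both natural and necessary for this approach.
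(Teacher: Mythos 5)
Your proposal is correct and follows essentially the same route as the paper: rewrite each $G_n'$ using the explicit forms of $\phi_1,\tilde\phi_1$ so that the $(1\pm s)$, $(1\pm\rho)$ powers split into an amplitude and a logarithmic phase, apply Lemma \ref{lem:osc} uniformly in $\rho,s$, and then absorb the mismatch between the resulting bracket argument and the target one into the factors $(1-\rho)^{-\frac12}(1-s)^{-\frac12}$ (using $\rho\geq s$ for $n=4$). The only difference is cosmetic: you make the absorption step explicit via the inequality $\langle t\rangle^{-2}\lesssim e^{x/2}\langle t+x\rangle^{-2}$ for $x\geq 0$, which the paper uses implicitly.
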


\begin{proof}
We have
\[ G_1'(\rho,s;\lambda)=1_{\R_+}(s-\rho)\rho^{-1}(1+\rho)^{\frac12-\lambda}s(1-s)^{-\frac12+\lambda}
\gamma_1'(\rho,s;\lambda) \]
and thus, for $G_1'$ it suffices to estimate
\begin{align*} I_1(\rho,s;\tau):=&\rho^{-1}(1+\rho)^\frac12 s(1-s)^{-\frac12} \int_\R 
e^{\I\omega[\tau-\log(1+\rho)+\log(1-s)]} \\
&\times [\O(\rho^0s^0)\Oo(\langle\omega\rangle^{-1} )
+\O(\rho^0s^0(1-\rho)^0(1-s)^0\langle\omega\rangle^{-2})]\d\omega.
\end{align*}
Lemma \ref{lem:osc} yields
\begin{align*}
 |I_1(\rho,s;\tau)|&\lesssim \rho^{-1}(1+\rho)^\frac12 s(1-s)^{-\frac12}\langle \tau-\log(1+\rho)+\log(1-s)\rangle^{-2} \\
 &\lesssim \rho^{-1}(1-\rho)^{-\frac12}
s(1-s)^{-\frac12}\langle\tau-\log(1-\rho)+\log(1-s)\rangle^{-2}.
 \end{align*}
For $G_2'$ we note that
\[ G_2'(\rho,s;\lambda)=1_{\R_+}(s-\rho)\rho^{-1}(1-\rho)^{-\frac12-\lambda}s(1-s)^{-\frac12+\lambda}
\gamma_2'(\rho,s;\lambda) \]
and Lemma \ref{lem:osc} yields the desired bound.
The estimate for $G_3'$ is the same as the estimate for $G_1'$.
Finally, $G_4'$ is given by
\[ G_4'(\rho,s;\lambda)=1_{\R_+}(\rho-s)\rho^{-1}(1+\rho)^{\frac12-\lambda}s(1+s)^{-\frac12+\lambda}\gamma_4'(\rho,s;\lambda) \]
and since $\langle\tau\rangle^{-2}\lesssim (1-\rho)^{-\frac12}(1-s)^{-\frac12}\langle\tau-\log(1-\rho)+\log(1-s)\rangle^{-2}$,
Lemma \ref{lem:osc} yields the desired bound.
\end{proof}

The bounds from Lemma \ref{lem:boundsG'} easily imply the $L^2$-boundedness of the operators
$S_n'(\tau)$.

\begin{lemma}
\label{lem:SL2}
We have
\begin{align*}
 \|S_n'(\tau)f\|_{L^2(\B^3)}&\lesssim \|f\|_{L^2(\B^3)},\qquad n\in \{1,2,3,4\}  \\
 \|\tilde S_n(\tau)f\|_{L^2(\B^3)}&\lesssim \|f\|_{L^2(\B^3)},\qquad n\in \{1,2,\dots,6\}
 \end{align*}
 for all $\tau\geq 0$ and $f\in C^1([0,1])$.
\end{lemma}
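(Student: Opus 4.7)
The plan is to reduce both families of operators to kernel estimates on $L^2(0,1)$ via the identification $\|f\|_{L^2(\B^3)}\simeq \|\rho f\|_{L^2(0,1)}$ for radial $f$, and then exploit the kernel bounds from Lemma~\ref{lem:boundsG'} (for $S_n'$) and Proposition~\ref{prop:Gn} (for $\tilde S_n$). Concretely, for any operator of the form $Tf(\rho)=\int_0^1 K(\rho,s)f(s)\,\d s$, boundedness on $L^2(\B^3)$ is equivalent to boundedness on $L^2(0,1)$ of $\tilde T F(\rho):=\rho\,Tf(\rho)$ acting on $F(s):=sf(s)$, with effective kernel $\tilde K(\rho,s):=\rho s^{-1}K(\rho,s)$. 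In both cases the awkward $\rho^{-1}$ and $s^{\pm 1}$ factors cancel in $\tilde K$, leaving a kernel that depends essentially only on $\log(1-\rho)$ and $\log(1-s)$.

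For $\tilde S_n(\tau)$, Lemma~\ref{lem:epsto0} together with Proposition~\ref{prop:Gn} gives
\[ |\tilde K(\rho,s)|\lesssim (1-s)^{-\frac12}\langle\tau+\log(1-s)\rangle^{-2}, \]
which is \emph{independent of $\rho$}. Cauchy--Schwarz in $s$ and the substitution $y=-\log(1-s)$ produce $\int_0^1 (1-s)^{-1}\langle\tau+\log(1-s)\rangle^{-4}\,\d s=\int_0^\infty\langle\tau-y\rangle^{-4}\,\d y\lesssim 1$, so $\|\tilde T F\|_{L^\infty(0,1)}\lesssim\|F\|_{L^2(0,1)}$ and hence the desired $L^2$ bound follows.

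For $S_n'(\tau)$, Lemma~\ref{lem:boundsG'} yields
\[ |\tilde K(\rho,s)|\lesssim (1-\rho)^{-\frac12}(1-s)^{-\frac12}\langle\tau-\log(1-\rho)+\log(1-s)\rangle^{-2}, \]
and the $(1-\rho)^{-1/2}$ factor prevents a pointwise bound in $\rho$. The remedy is a logarithmic change of variable $x=-\log(1-\rho)$, $y=-\log(1-s)$: setting $G(y):=e^{-y/2}F(1-e^{-y})$ and $H(x):=e^{-x/2}\tilde TF(1-e^{-x})$, one has $\|G\|_{L^2(0,\infty)}=\|F\|_{L^2(0,1)}$ and $\|H\|_{L^2(0,\infty)}=\|\tilde TF\|_{L^2(0,1)}$. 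The factors $(1-\rho)^{-1/2}=e^{x/2}$ and $(1-s)^{-1/2}=e^{y/2}$ are exactly absorbed by the Jacobians $e^{-x}\,\d x$ and $e^{-y}\,\d y$, giving
\[ |H(x)|\lesssim \int_0^\infty \langle\tau+x-y\rangle^{-2} G(y)\,\d y = (\phi_\tau * G)(x), \qquad \phi_\tau(x):=\langle\tau+x\rangle^{-2}, \]
after extending $G$ by zero to $\R$. Since $\|\phi_\tau\|_{L^1(\R)}\lesssim 1$ uniformly in $\tau$, Young's inequality gives $\|H\|_{L^2(0,\infty)}\lesssim\|G\|_{L^2(0,\infty)}$, which is the required estimate.

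The calculations above are essentially routine; the only place where something has to be said is the reconciliation of the $\rho$- and $s$-dependent weights in the $S_n'$ kernel with the $L^2(\B^3)$ measure, which is the point of the exponential substitution in the third paragraph. Reducing the proof to a convolution estimate with an $L^1$ kernel is the key conceptual observation, and once it is in place the bound is uniform in $\tau\ge 0$, as claimed.
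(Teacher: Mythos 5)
Your proof is correct and follows essentially the same route as the paper: the exponential substitution $\rho=1-e^{-x}$, $s=1-e^{-y}$ converts the kernel bounds of Lemma \ref{lem:boundsG'} into a convolution against the $L^1$ kernel $\langle\tau+\cdot\rangle^{-2}$, and Young's inequality yields the bound uniformly in $\tau$. The only inessential deviation concerns $\tilde S_n$, where you exploit the $\rho$-independence of the effective kernel via Cauchy--Schwarz, whereas the paper simply dominates the $\tilde S_n$ kernel bound by the $S_n'$-type bound from Proposition \ref{prop:Gn} and reuses the same Young argument.
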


\begin{proof}
We set
\[ K_n'(\rho,s;\tau):=\frac{1}{2\pi}\int_\R e^{\I\omega\tau}G_n'(\rho,s;\I\omega)\d\omega. \]
Thus,
\[ S_n'(\tau)f(\rho)=\int_0^1 K_n'(\rho,s;\tau)f(s)\d s. \]
From Lemma \ref{lem:boundsG'} we have the bounds
\[ |K_n'(\rho,s;\tau)|\lesssim \rho^{-1}(1-\rho)^{-\frac12}s(1-s)^{-\frac12}
\langle \tau-\log(1-\rho)+\log(1-s)\rangle^{-2} \]
and thus, by the change of variables $\rho=1-e^{-x}$, $s=1-e^{-y}$, we infer
\begin{align*} &|[S_n'(\tau)f](1-e^{-x})|  \lesssim e^{x/2}(1-e^{-x})^{-1}\int_0^\infty \langle \tau+x-y\rangle^{-2}
|f(1-e^{-y})|(1-e^{-y})e^{-y/2}\d y .
\end{align*}
Consequently, Young's inequality yields
\begin{align*}
\|S_n'(\tau)f\|_{L^2(\B^3)}&\simeq \left \|[S_n'(\tau)f](1-e^{-|\cdot|})(1-e^{-|\cdot|})e^{-|\cdot|/2} \right \|_{L^2(\R_+)} \\
&\lesssim \left \|\langle \tau+\cdot\rangle^{-2}\right \|_{L^1(\R)}
\left \|f(1-e^{-|\cdot|})(1-e^{-|\cdot|})e^{-|\cdot|/2}\right \|_{L^2(\R_+)} \\
&\simeq \|f\|_{L^2(\B^3)}.
\end{align*}
The bounds for $\tilde S_n(\tau)$ follow in the same way by noting that
\begin{align*} 
\left |\int_\R e^{\I\omega\tau}\rho^{-1}\tilde G_n(\rho,s;\I\omega)\d\omega\right |&\lesssim 
\rho^{-1}s(1-s)^{-\frac12}\langle\tau+\log(1-s)\rangle^{-2} \\
&\lesssim \rho^{-1}(1-\rho)^{-\frac12}s(1-s)^{-\frac12}
\langle \tau-\log(1-\rho)+\log(1-s)\rangle^{-2},
\end{align*}
see Proposition \ref{prop:Gn}.
\end{proof}

\subsection{The operators $\dot S_n'(\tau)$}

As before, we need to trade a derivative in $s$ for decay in $\lambda$ in order to control
the term $\lambda\tilde f_1$ in Eq.~\eqref{eq:SG'1}.
Thus, for $\tau\geq 0$, $\rho\in (0,1)$, $f\in C^1([0,1])$, and $n\in \{1,2,3,4\}$, we define the operators
\[ \dot S_{n,\epsilon}'(\tau)f(\rho):=\frac{1}{2\pi \I}\lim_{N\to\infty}\int_{\epsilon-\I N}^{\epsilon+\I N}
\lambda e^{\lambda\tau}\int_0^1 G_n'(\rho,s;\lambda)f(s)\d s\d\lambda. \]

\begin{lemma}
\label{lem:dotSn'}
Set
\[ \dot S_n'(\tau)f(\rho)=\lim_{\epsilon\to 0+}\dot S_{n,\epsilon}'(\tau)f(\rho). \]
Then we have
\[ \|\dot S_n'(\tau)f\|_{L^2(\B^3)}\lesssim \|f\|_{H^1(\B^3)} \]
for all $\tau\geq 0$, $f\in C^1([0,1])$, and $n\in \{1,2,3,4\}$.
\end{lemma}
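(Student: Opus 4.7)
The plan is to mimic the integration-by-parts trick from the proof of Proposition \ref{prop:dotTn}, but now applied to $G_n'$ instead of $G_n$. After stripping out the $\rho$-factor ($\phi_1(\rho;\lambda)$ for $n\in\{1,3,4\}$, $(1-\rho)^{-1}\tilde\phi_1(\rho;\lambda)$ for $n=2$) and collapsing $(1-s^2)^{-1/2+\lambda}\phi_1(s;\lambda)\cdot s^2$ (respectively $(1-s^2)^{-1/2+\lambda}\tilde\phi_1(s;\lambda)\cdot s^2$ for $n=4$), the $s$-dependent part of each $G_n'(\rho,s;\lambda)$ reduces to $s(1-s)^{-1/2+\lambda}\gamma_n'(\rho,s;\lambda)$ for $n\in\{1,2,3\}$ and $s(1+s)^{-1/2+\lambda}\gamma_4'(\rho,s;\lambda)$ for $n=4$. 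Using
\[ (1\mp s)^{-1/2+\lambda} = \mp\tfrac{2}{1+2\lambda}\partial_s(1\mp s)^{1/2+\lambda}, \]
I integrate by parts on $[\rho,1]$ for $n\in\{1,2\}$ and on $[0,\rho]$ for $n\in\{3,4\}$. The endpoints $s=0$ and $s=1$ drop out because of the factors $s$ and $(1-s)^{1/2+\lambda}$, so only the endpoint $s=\rho$ survives. The factor $\tfrac{1}{1+2\lambda}$ produced by the integration by parts combines with the outer $\lambda$ in \eqref{def:dotT} to give the bounded symbol $\tfrac{2\lambda}{1+2\lambda}$, and simultaneously provides the $\langle\lambda\rangle^{-1}$-decay needed to justify both absolute convergence and the limit $\epsilon\to 0+$ (in the spirit of Lemma \ref{lem:epsto0'}).

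For the interior term, distributing $\partial_s$ onto $s\gamma_n'(\rho,s;\lambda)f(s)$ produces three pieces (applied to $f$, to $s\partial_s\gamma_n'\cdot f$, and to $s\gamma_n'\cdot f'$). Since $s\partial_s\gamma_n'$ is of the same symbol type as $\gamma_n'$, and $(1-s)^{1/2+\lambda}=(1-s)(1-s)^{-1/2+\lambda}$, each resulting kernel is of the same form as $G_n'(\rho,s;\lambda)$ times an extra $(1-s)$ factor; the corresponding operators are therefore bounded on $L^2(\B^3)$ by (the proof of) Lemma \ref{lem:SL2}, the extra $(1-s)$ only improving the pointwise bound of Lemma \ref{lem:boundsG'}. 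This yields a total $L^2(\B^3)$-bound of $\|f\|_{L^2(\B^3)} + \|f'\|_{L^2(\B^3)} \lesssim \|f\|_{H^1(\B^3)}$.

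The boundary term, after reassembling all $\rho$-factors, takes the form $f(\rho)\cdot M_n(\tau,\rho)$ with
\[ M_n(\tau,\rho)=\frac{1}{\pi}\int_\R\frac{\I\omega}{1+2\I\omega}e^{\I\omega\Phi_n(\tau,\rho)}A_n(\rho)\gamma_n'(\rho,\rho;\I\omega)\,\d\omega, \]
where $A_n$ is a smooth bounded $\rho$-amplitude and $\Phi_n(\tau,\rho)$ is the residual phase (e.g.\ $\tau-\log(1+\rho)+\log(1-\rho)$ for $n\in\{1,3\}$ and simply $\tau$ for $n\in\{2,4\}$). The key algebraic observation is that the factor $\rho^{-1}$ from $\phi_1(\rho;\lambda)$ is cancelled by the $\rho$ coming from the boundary value and, crucially for $n=2$, the singular $(1-\rho)^{-1/2-\lambda}$ from $(1-\rho)^{-1}\tilde\phi_1(\rho;\lambda)$ is cancelled \emph{exactly} by $(1-\rho)^{1/2+\lambda}$. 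Since $\gamma_n'(\rho,\rho;\I\omega)=\O(\rho^0)\Oo(\langle\omega\rangle^{-1})+\O(\langle\omega\rangle^{-2})$, Lemma \ref{lem:osc} yields $|M_n(\tau,\rho)|\lesssim\langle\Phi_n(\tau,\rho)\rangle^{-2}\lesssim 1$, so the boundary contribution is controlled by $\|f\|_{L^2(\B^3)}\lesssim \|f\|_{H^1(\B^3)}$. The main obstacle is verifying these $\rho$-cancellations case by case—especially the one for $n=2$, which is what keeps the boundary multiplier bounded; once these algebraic identities are in hand, the rest is a routine adaptation of the argument given for Proposition \ref{prop:dotTn}.
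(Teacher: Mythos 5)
Your proposal follows the paper's own proof essentially step for step: the same integration by parts in $s$ that turns the outer factor $\lambda$ into the bounded symbol $2\lambda/(1+2\lambda)$, the same split into boundary and interior contributions, the boundary multiplier controlled through Lemma \ref{lem:osc} (your case-by-case cancellation of the $\rho$-factors, including the exact $(1-\rho)$-cancellation for $n=2$ and the phases $\tau-\log(1+\rho)+\log(1-\rho)$ resp.\ $\tau$, is precisely what the paper compresses into ``cf.~Proposition \ref{prop:dotTn}''), and the interior terms reduced to operators of the type handled in Lemma \ref{lem:SL2}.

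One step is inaccurate as written. When $\partial_s$ falls on the explicit factor $s$, the resulting kernel is $G_n'$ times $(1-s)/s$, not times $(1-s)$: the $s$-weight is lost. For $n\in\{3,4\}$, where the $s$-integration runs over $(0,\rho)$, the corresponding operator is genuinely not bounded on $L^2(\B^3)$ --- for instance $f(s)=s^{-3/2}|\log s|^{-1}$ lies in $L^2(\B^3)$ while $\int_0^\rho|f(s)|\,\d s$ already diverges --- so the claimed control of this piece by $\|f\|_{L^2(\B^3)}$ fails. The repair is exactly the paper's: the Young-type argument from the proof of Lemma \ref{lem:SL2} bounds this piece by the \emph{unweighted} norm $\|f\|_{L^2(0,1)}$ (after the change of variables the factor $(1-s)^{1/2}\,\d s$ becomes $e^{-3y/2}\,\d y\leq e^{-y/2}\,\d y$), and $\|f\|_{L^2(0,1)}\lesssim\|f\|_{H^1(\B^3)}$ by the radial integration-by-parts/Sobolev estimate already used in Proposition \ref{prop:strich}; this is the role of the sentence ``since $\|f\|_{L^2(0,1)}\lesssim\|f\|_{H^1(\B^3)}$\dots'' in the paper's proof. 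With this correction your argument coincides with the paper's. (Minor slip: the outer $\lambda$ comes from the definition of $\dot S_{n,\epsilon}'$, not from Eq.~\eqref{def:dotT}, which defines $\dot T_{n,\epsilon}$.)
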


\begin{proof}
The $s$-dependent part of $G_1'(\rho,s;\lambda)$ reads
\[ 1_{\R_+}(s-\rho)s(1-s)^{-\frac12+\lambda}\gamma_1'(\rho,s;\lambda). \]
An integration by parts yields
\begin{align*}
 \int_0^1 &1_{\R_+}(s-\rho)s(1-s)^{-\frac12+\lambda}\gamma_1'(\rho,s;\lambda)f(s)\d s \\
 &=-\frac{2}{1+2\lambda}\int_\rho^1 \partial_s (1-s)^{\frac12+\lambda}s\gamma_1'(\rho,s;\lambda)f(s)\d s \\
 &=\frac{2}{1+2\lambda}(1-\rho)^{\frac12+\lambda}\rho\gamma_1'(\rho,\rho;\lambda)f(\rho) \\
 &\quad +\frac{2}{1+2\lambda}\int_0^1 1_{\R_+}(s-\rho)(1-s)^{\frac12+\lambda}\partial_s [s\gamma_1'(\rho,s;\lambda)
 f(s)]\d s.
 \end{align*}
 We have 
 \[ s\partial_s \gamma_1'(\rho,s;\lambda)=\O(\rho^0s^0)\Oo(\langle\omega\rangle^{-1})+
 \O(\rho^0(1-\rho)^0 s^0(1-s)^0\langle\omega\rangle^{-2}) \]
 and since $\|f\|_{L^2(0,1)}\lesssim \|f\|_{H^1(\B^3)}$, the integral term leads to an operator
 that is bounded from $H^1(\B^3)$ to $L^2(\B^3)$.
By H\"older's inequality, the same is true for the boundary term, cf.~Proposition \ref{prop:dotTn}.
The other cases are handled similarly.
\end{proof}

\subsection{Energy bounds}
Finally, we can conclude the desired energy bound.

\begin{lemma}
\label{lem:energy}
For the semigroup $\mb S$ generated by $\mb L_0+\mb L'$, see Proposition \ref{prop:SG},
we have the bound
\[ \|\mb S(\tau)(\mb I-\mb P)\mb f\|_{\mc H}\lesssim \|(\mb I-\mb P)\mb f\|_{\mc H} \]
for all $\tau\geq 0$ and all $\mb f\in \mc H$.
\end{lemma}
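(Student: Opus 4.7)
The plan is to control each component of $\mb u(\tau):=\mb S(\tau)(\mb I-\mb P)\mb f$ in its $\mc H$-norm directly from the Laplace representations developed in Sections 3--5, and then close by density. Let $\mb f\in C^2\times C^1([0,1])$ and set $\tilde{\mb f}:=(\mb I-\mb P)\mb f$; as in Section 4 one has $\tilde{\mb f}\in\mc D(\mb L)$, so $\mb u\in C^1([0,\infty),\mc H)$ satisfies $\partial_\tau\mb u(\tau)=\mb L\mb u(\tau)$. Extracting the first component of this ODE yields the pointwise identity
\[
u_2(\tau,\rho)=\partial_\tau u_1(\tau,\rho)+\rho\partial_\rho u_1(\tau,\rho)+\tfrac12 u_1(\tau,\rho),
\]
so $\|\mb u(\tau)\|_{\mc H}$ is controlled, uniformly in $\tau$, by the three quantities $\|u_1(\tau,\cdot)\|_{L^2(\B^3)}$, $\|\partial_\rho u_1(\tau,\cdot)\|_{L^2(\B^3)}$, and $\|\partial_\tau u_1(\tau,\cdot)\|_{L^2(\B^3)}$.

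The first two are treated in parallel with the proof of Theorem \ref{thm:strich}. Starting from
\[
[\mb S(\tau)\tilde{\mb f}]_1=[\mb S_0(\tau)\tilde{\mb f}]_1+\sum_{n=1}^6\Bigl(T_n(\tau)(|\cdot|\tilde f_1'+\tfrac32\tilde f_1+\tilde f_2)+\dot T_n(\tau)\tilde f_1\Bigr)
\]
and the analogous decomposition of $\partial_\rho[\mb S(\tau)\tilde{\mb f}]_1$ furnished by Lemma \ref{lem:decompG'}, the free contributions $[\mb S_0(\tau)\tilde{\mb f}]_j$ are bounded in $\mc H$ by $\|\tilde{\mb f}\|_{\mc H}$ via Proposition \ref{prop:gen}. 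For the perturbation of $\partial_\rho u_1$, Lemma \ref{lem:SL2} handles $S_n'$ and $\tilde S_n$, and Lemma \ref{lem:dotSn'} handles $\dot S_n'$. The still-needed $L^2(\B^3)$-in-$\rho$ bounds
\[
\|T_n(\tau)f\|_{L^2(\B^3)}\lesssim \|f\|_{L^2(\B^3)},\qquad \|\dot T_n(\tau)f\|_{L^2(\B^3)}\lesssim \|f\|_{H^1(\B^3)},
\]
uniform in $\tau$, follow from the kernel bounds of Proposition \ref{prop:Gn} by replacing Young's inequality in the proof of Lemma \ref{lem:Tn} with a Cauchy--Schwarz estimate in $s$ (the resulting $L^\infty(\B^3)$ bound dominates $L^2(\B^3)$ on the bounded domain), and by the same boundary-term-plus-integral-term analysis as in the proof of Proposition \ref{prop:dotTn}.

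The remaining piece $\|\partial_\tau u_1(\tau,\cdot)\|_{L^2(\B^3)}$ is obtained by differentiating \eqref{eq:decomp} in $\tau$, which inserts an extra factor of $\lambda$ into the Bromwich integral: the terms $T_n(\tau)(\cdots)$ become $\dot T_n(\tau)(\cdots)$ (already controlled above), while the terms $\dot T_n(\tau)\tilde f_1$ become new operators $\ddot T_n(\tau)\tilde f_1$ carrying a factor $\lambda^2$. To handle $\ddot T_n$ one performs a single integration by parts in $s$ exactly as in the proof of Proposition \ref{prop:dotTn}, which together with the Wronskian prefactor $\frac{1}{1-2\lambda}$ present in every $G_n$ gains a factor $\frac{1}{1+2\lambda}$; the resulting multiplier $\frac{\lambda^2}{1-4\lambda^2}$ is uniformly bounded on $\I\R$ (tending to $-\tfrac14$ at infinity), and combined with the $\Oo(\langle\omega\rangle^{-1})+\O(\langle\omega\rangle^{-2})$ decay of $\gamma_n$ it still permits the application of Lemma \ref{lem:osc}, yielding $\|\ddot T_n(\tau)\tilde f_1\|_{L^2(\B^3)}\lesssim \|\tilde f_1\|_{H^1(\B^3)}$ uniformly in $\tau$. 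The free contribution $\partial_\tau[\mb S_0(\tau)\tilde{\mb f}]_1$ is bounded via the free-system identity $\partial_\tau[\mb S_0(\tau)\tilde{\mb f}]_1=-\rho\partial_\rho[\mb S_0(\tau)\tilde{\mb f}]_1-\tfrac12[\mb S_0(\tau)\tilde{\mb f}]_1+[\mb S_0(\tau)\tilde{\mb f}]_2$ and Proposition \ref{prop:gen}.

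The main obstacle is precisely the $\lambda^2$ factor appearing in $\partial_\tau u_1$; the resolving observation is that $\frac{\lambda^2}{(1-2\lambda)(1+2\lambda)}$ remains bounded on the imaginary axis, so a single integration by parts in $s$---which costs only the $H^1$-regularity of $\tilde f_1$ already available in $\mc H$---is enough to bring all integrands back into the range of applicability of Lemma \ref{lem:osc}. Assembling the three bounds gives $\|\mb u(\tau)\|_{\mc H}\lesssim \|\tilde{\mb f}\|_{\mc H}$ uniformly in $\tau$ on the dense subspace $(\mb I-\mb P)(C^2\times C^1([0,1]))$, and a density argument completes the proof.
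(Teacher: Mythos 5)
Your overall architecture (bound $u_1$ in $L^2$, $\partial_\rho u_1$ in $L^2$ via the primed kernels of Lemma \ref{lem:decompG'} together with Lemmas \ref{lem:SL2} and \ref{lem:dotSn'}, bound $\partial_\tau u_1$ by inserting factors of $\lambda$, then reconstruct $u_2=[\partial_\tau+\rho\partial_\rho+\tfrac12]u_1$ and close by density) is exactly the paper's, and your treatment of the $\lambda^2$ terms and of the free part is fine. However, there is a genuine gap in the $\partial_\tau u_1$ step: when you differentiate \eqref{eq:decomp} in $\tau$, the terms $T_n(\tau)(|\cdot|\tilde f_1'+\tfrac32\tilde f_1+\tilde f_2)$ turn into the one-$\lambda$ operators applied to the \emph{full} source $g:=|\cdot|\tilde f_1'+\tfrac32\tilde f_1+\tilde f_2$, and you declare these ``already controlled above'' by your bound $\|\dot T_n(\tau)f\|_{L^2(\B^3)}\lesssim\|f\|_{H^1(\B^3)}$. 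But $\|g\|_{H^1(\B^3)}$ is \emph{not} controlled by $\|\tilde{\mb f}\|_{\mc H}$: it involves $\tilde f_1''$ and $\tilde f_2'$, i.e.\ $H^2\times H^1$ data, which is precisely the regularity the lemma is designed to avoid. With that bound your chain of estimates yields $\|\mb S(\tau)\tilde{\mb f}\|_{\mc H}\lesssim\|\tilde f_1\|_{H^2}+\|\tilde f_2\|_{H^1}$ on the dense class, and the concluding density argument cannot upgrade this to the claimed uniform $\mc H$-bound. An integration by parts in $s$ cannot rescue this term either, since it would again differentiate $g$.

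What is needed here is an $L^2(\B^3)\to L^2(\B^3)$ bound for the one-$\lambda$ operators, uniform in $\tau$; this is how the paper closes the argument (it observes that $\dot T_n(\tau)$ is ``comparable to $S_n'(\tau)$'' and invokes Lemma \ref{lem:SL2}). The missing ingredient is in fact within reach of the multiplier observation you already use for $\lambda^2$: writing $\lambda G_n=\tfrac{\lambda}{1-2\lambda}\,\big[(1-2\lambda)G_n\big]$ and noting that on the critical line
\[
\frac{\I\omega}{1-2\I\omega}=-\tfrac12+\Oo(\langle\omega\rangle^{-1})+\O(\langle\omega\rangle^{-2}),
\]
the product with $\gamma_n$ retains exactly the structure $\O(\cdot)\Oo(\langle\omega\rangle^{-1})+\O(\cdot\,\langle\omega\rangle^{-2})$ required by Lemma \ref{lem:osc}, so the kernel bounds of Proposition \ref{prop:Gn} persist \emph{without} any integration by parts in $s$ (modulo the usual justification of interchanging the $\omega$- and $s$-integrals, as in Lemma \ref{lem:oscder}), and the Cauchy--Schwarz/Young argument of Lemma \ref{lem:Tn} then gives $\|\dot T_n(\tau)g\|_{L^2(\B^3)}\lesssim\|g\|_{L^2(\B^3)}\lesssim\|\tilde{\mb f}\|_{\mc H}$. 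With that replacement your proof closes; as written, the step fails quantitatively.
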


\begin{proof}
Let $\tilde{\mb f}:=(\mb I-\mb P)\mb f\in C^2\times C^1([0,1])$.
From the representation
\[  [\mb S(\tau)\tilde{\mb f}]_1=[\mb S_0(\tau)\tilde{\mb f}]_1
+\sum_{n=1}^6 \left [ T_n(\tau)(|\cdot|\tilde f_1'+\tfrac32 \tilde f_1+\tilde f_2)
+\dot T_n(\tau)\tilde f_1 \right ], \]
Propositions \ref{prop:gen}, \ref{prop:dotTn}, Lemma \ref{lem:Tn}, and H\"older's inequality we infer
the bound
\[ \|[\mb S(\tau)\tilde{\mb f}]_1\|_{L^2(\B^3)}\lesssim \|\tilde{\mb f}\|_{\mc H}. \]
Furthermore, from Eq.~\eqref{eq:SG'1} and Lemmas \ref{lem:SL2}, \ref{lem:dotSn'} we obtain
\[ \|[\mb S(\tau)\tilde{\mb f}]_1\|_{\dot H^1(\B^3)}\lesssim \|\tilde{\mb f}\|_{\mc H}. \]
By definition, see Eq.~\eqref{eq:psitopsi12}, the second component of 
$\mb S(\tau)\tilde{\mb f}$ is given by
\[ [\mb S(\tau)\tilde{\mb f}]_2(\rho)=[\partial_\tau+\rho\partial_\rho+\tfrac12]
[\mb S(\tau)\tilde{\mb f}]_1(\rho) \]
and since a $\tau$-derivative produces a factor of $\lambda$, the operators $\dot T_n(\tau)$ and
$\partial_\tau \dot T_n(\tau)$ are comparable to $S_n'(\tau)$ and $\dot S_n'(\tau)$, respectively.
Consequently, we obtain from
Lemmas \ref{lem:SL2}, \ref{lem:dotSn'} the bound
\[ \|[\mb S(\tau)\tilde{\mb f}]_2\|_{L^2(\B^3)}\lesssim \|\tilde{\mb f}\|_{\mc H}. \]
and the claim follows by a density argument.
\end{proof}

\section{The nonlinear problem}

\noindent Now we turn to the nonlinear problem
\[ \Phi(\tau)=\mb S(\tau)\mb u+\int_0^\tau \mb S(\tau-\sigma)\mb N(\Phi(\sigma))\d\sigma. \]
Recall that
\[ \mb N(\mb u)=\left (\begin{array}{c} 0 \\ N(u_1) \end{array} \right ) \]
where
\[ N(u_1)=10c_3^3 u_1^2+10 c_3^2 u_1^3+5c_3 u_1^4+u_1^5. \]
The control of the nonlinearity will be based on the following simple bounds combined
with Strichartz estimates.

\begin{lemma}
\label{lem:N}
We have the bounds
\begin{align*} 
\|\mb N(\mb u)\|_{\mc H}&\lesssim \|u_1\|_{L^{10}(\B^3)}^2+\|u_1\|_{L^{10}(\B^3)}^5 \\
\|\mb N(\mb u)-\mb N(\mb v)\|_{\mc H}&\lesssim \Big (\|u_1\|_{L^{10}(\B^3)}
+\|u_1\|_{L^{10}(\B^3)}^4 \\
&\quad +\|v_1\|_{L^{10}(\B^3)}+\|v_1\|_{L^{10}(\B^3)}^4 \Big )\|u_1-v_1\|_{L^{10}(\B^3)}
\end{align*}
for all $\mb u, \mb v \in \mc H$.
\end{lemma}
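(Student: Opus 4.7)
The key observation is that $\|\mb N(\mb u)\|_{\mc H} = \|N(u_1)\|_{L^2(\B^3)}$, so the whole lemma reduces to $L^2$-estimates of polynomials in $u_1$ on the bounded domain $\B^3$. The plan is to treat each monomial $u_1^k$ for $k \in \{2,3,4,5\}$ separately by pure Hölder inequality, and then absorb the intermediate powers into the extreme ones using the fact that $x^j \lesssim x^2 + x^5$ for all $x \geq 0$ and $j \in \{2,3,4,5\}$ (split into the regimes $x \leq 1$ and $x \geq 1$).

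For the first bound, the observation is that $\|u_1^k\|_{L^2(\B^3)} = \|u_1\|_{L^{2k}(\B^3)}^k$, and since $\B^3$ has finite measure, the embedding $L^{10}(\B^3) \hookrightarrow L^{2k}(\B^3)$ holds for $k \in \{2,3,4,5\}$. Summing over the four monomials in $N(u_1)$ with the triangle inequality yields
\[
\|N(u_1)\|_{L^2(\B^3)} \lesssim \sum_{k=2}^5 \|u_1\|_{L^{10}(\B^3)}^k,
\]
and the above absorption argument collapses this to the stated bound.

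For the difference bound, I would use the algebraic identity $a^k - b^k = (a-b)\sum_{j=0}^{k-1} a^j b^{k-1-j}$, giving the pointwise estimate $|u_1^k - v_1^k| \lesssim |u_1-v_1|\bigl(|u_1|^{k-1} + |v_1|^{k-1}\bigr)$. Hölder's inequality with exponents $\frac{1}{2} = \frac{1}{10} + \frac{k-1}{p}$ forces $p = \frac{5(k-1)}{2} \leq 10$ for $k \leq 5$, so on the bounded domain $\B^3$ we can again estimate $\|u_1\|_{L^p(\B^3)} \lesssim \|u_1\|_{L^{10}(\B^3)}$, yielding
\[
\|u_1^k - v_1^k\|_{L^2(\B^3)} \lesssim \|u_1 - v_1\|_{L^{10}(\B^3)} \bigl(\|u_1\|_{L^{10}(\B^3)}^{k-1} + \|v_1\|_{L^{10}(\B^3)}^{k-1}\bigr).
\]
Summing over $k \in \{2,3,4,5\}$ and again absorbing the intermediate powers $x, x^2, x^3, x^4$ into $x + x^4$ (using $x^j \lesssim x + x^4$ for $j \in \{1,2,3,4\}$, by the same case distinction $x \leq 1$ versus $x \geq 1$) delivers the claimed Lipschitz estimate.

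There is no real obstacle here; the proof is essentially bookkeeping built on Hölder's inequality and the inclusion of $L^p$-spaces on a domain of finite measure. The only point requiring a moment of care is the choice of the Hölder exponent $p = 5(k-1)/2$ in the difference estimate, which must be verified to satisfy $p \leq 10$ for all $k$ appearing in $N$ — which is exactly the critical exponent for the degree-$5$ nonlinearity.
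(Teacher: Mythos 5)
Your proof is correct and follows essentially the same route as the paper, which also just invokes H\"older's inequality together with the factorization identities $a^k-b^k=(a-b)\sum_{j}a^jb^{k-1-j}$; your exponent bookkeeping ($p=5(k-1)/2\leq 10$, absorption of intermediate powers on the finite-measure domain $\B^3$) simply makes explicit what the paper leaves as routine.
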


\begin{proof}
The estimates are a simple consequence of H\"older's inequality and elementary
identities such as $u_1^2-v_1^2=(u_1+v_1)(u_1-v_1)$.
\end{proof}

\subsection{The modified equation}
Since the linear evolution has an unstable direction, we have to first modify the 
equation in order to construct a global (in $\tau$) solution.
In a second step we then show how to remove this modification.

For given initial data $\mb u \in \mc H$ we consider the map
\begin{align*} \mb K_\mb u(\Phi)(\tau):&=\mb S(\tau)\mb u
+\int_0^\tau \mb S(\tau-\sigma)\mb N(\Phi(\sigma))\d\sigma-e^\tau \mb C(\Phi,\mb u) \\
&=\mb S(\tau)[\mb u-\mb C(\Phi,\mb u)]
+\int_0^\tau \mb S(\tau-\sigma)\mb N(\Phi(\sigma))\d\sigma
\end{align*}
where
\[ \mb C(\Phi,\mb u):=\mb P\left [\mb u+\int_0^\infty e^{-\sigma}
\mb N(\Phi(\sigma))\d\sigma \right ]. \] 
At this stage these definitions are purely formal.
Now we introduce suitable function spaces and prove mapping properties of $\mb K_\mb u$.
For a function $\Phi(\tau)(\rho)=(\phi_1(\tau,\rho),\phi_2(\tau,\rho))$ we define
\[ \|\Phi\|_{\mc X}^2:=\|\Phi\|_{L^\infty(\R_+)\mc H}^2+\|\phi_1\|_{L^2(\R_+)L^\infty(\B^3)}^2 \]
and introduce the Banach space
\[ \mc X:=\left \{\Phi \in C([0,\infty),\mc H): \phi_1\in L^2(\R_+,L^\infty(\B^3)),
\|\Phi\|_{\mc X}<\infty \right \}. \]
Furthermore, we set
\[ \mc X_\delta:=\left \{\Phi\in \mc X: \|\Phi\|_{\mc X}\leq \delta \right \}. \]

\begin{lemma}
\label{lem:Kball}
There exists a $c>0$ such that the following holds.
If $\|\mb u\|_{\mc H}\leq \frac{\delta}{c}$ and $\Phi \in \mc X_\delta$
then $\mb K_{\mb u}(\Phi)\in \mc X_\delta$, provided $\delta>0$ is sufficiently small.
\end{lemma}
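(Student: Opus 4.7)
The plan is to first reformulate $\mb K_{\mb u}(\Phi)$ so that the $e^\tau$ divergence of the unstable direction is explicitly cancelled, then to estimate each remaining piece using the bounds from Lemma~\ref{lem:energy}, Theorem~\ref{thm:strich} and Lemma~\ref{lem:N}. Splitting $\mb S(\tau) = \mb S(\tau)(\mb I-\mb P) + e^\tau \mb P$ in both $\mb S(\tau)\mb u$ and the Duhamel integral shows that the correction $e^\tau \mb C(\Phi,\mb u)$ is designed precisely to cancel the $\mb P$-parts of both. A direct computation should then give
\[
\mb K_{\mb u}(\Phi)(\tau) = \mb S(\tau)(\mb I-\mb P)\mb u + \int_0^\tau \mb S(\tau-\sigma)(\mb I-\mb P)\mb N(\Phi(\sigma))\,\d\sigma - \mb P\int_\tau^\infty e^{-(\sigma-\tau)}\mb N(\Phi(\sigma))\,\d\sigma,
\]
an expression free of $e^\tau$ factors.

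The next step is to control $\|\mb N(\Phi)\|_{L^1(\R_+)\mc H}$. Lemma~\ref{lem:N} yields the pointwise bound $\|\mb N(\Phi(\sigma))\|_\mc H \lesssim \|\phi_1(\sigma)\|_{L^{10}(\B^3)}^2 + \|\phi_1(\sigma)\|_{L^{10}(\B^3)}^5$. For the quadratic part I would use the embedding $L^\infty(\B^3)\hookrightarrow L^{10}(\B^3)$, valid because $\B^3$ is bounded, to obtain $\int_0^\infty \|\phi_1\|_{L^{10}(\B^3)}^2\,\d\sigma \lesssim \|\phi_1\|_{L^2(\R_+)L^\infty(\B^3)}^2 \leq \delta^2$. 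For the quintic part the interpolation $\|\phi_1\|_{L^{10}(\B^3)} \leq \|\phi_1\|_{L^\infty(\B^3)}^{2/5}\|\phi_1\|_{L^6(\B^3)}^{3/5}$, together with the Sobolev embedding $H^1(\B^3)\hookrightarrow L^6(\B^3)$ and Hölder's inequality in $\sigma$, gives $\int_0^\infty \|\phi_1\|_{L^{10}(\B^3)}^5\,\d\sigma \lesssim \|\Phi\|_{L^\infty\mc H}^3 \|\phi_1\|_{L^2 L^\infty}^2 \leq \delta^5$, so $\|\mb N(\Phi)\|_{L^1\mc H} \lesssim \delta^2$ for $\delta$ small.

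Each of the three summands in the reformulated $\mb K_{\mb u}(\Phi)$ is then to be bounded in both components of the $\mc X$-norm. The linear term is controlled by $\|\mb u\|_\mc H$ via Lemma~\ref{lem:energy} (for $L^\infty\mc H$) and Theorem~\ref{thm:strich} (for $L^2 L^\infty$); the Duhamel integral is controlled by $\|\mb N(\Phi)\|_{L^1\mc H}$ via the inhomogeneous versions of the same two estimates. For the last term, $\mb P\mb v = c(\mb v)\mb g$ with $\mb g=(2,3)$ a constant vector, so both its $\mc H$-norm and the $L^\infty(\B^3)$-norm of its first component are comparable to $|c(\mb v)|\lesssim \|\mb v\|_\mc H$; viewing $\tau\mapsto\|\mb w(\tau)\|_\mc H$, with $\mb w(\tau):=\int_\tau^\infty e^{-(\sigma-\tau)}\mb N(\Phi(\sigma))\,\d\sigma$, as the convolution $(h\ast g)(\tau)$ where $h(t)=e^t\mathbf 1_{t\leq 0}\in L^1(\R)\cap L^2(\R)$ and $g(\sigma):=\|\mb N(\Phi(\sigma))\|_\mc H$, Young's inequality produces both the $L^\infty_\tau$- and the $L^2_\tau$-bound by $\|\mb N(\Phi)\|_{L^1\mc H}$. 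Summing, $\|\mb K_{\mb u}(\Phi)\|_\mc X \leq C_1\delta/c + C_2(\delta^2+\delta^5)$, which is $\leq \delta$ once $c$ is chosen large and $\delta$ is sufficiently small. Strong continuity of $\mb K_{\mb u}(\Phi)$ as an $\mc H$-valued function of $\tau$ will follow from strong continuity of $\mb S$ and standard dominated-convergence arguments applied to the Duhamel and tail integrals.

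The main obstacle is the quadratic nonlinearity $u_1^2$: the natural exponent pair $(p,q)=(2,10)$ is \emph{not} admissible under $\frac1p+\frac3q=\frac12$, so $\int\|\phi_1\|_{L^{10}}^2\,\d\sigma$ cannot be bounded by a Strichartz norm directly. Only the trivial inclusion $L^\infty(\B^3)\hookrightarrow L^{10}(\B^3)$, available thanks to the bounded domain, allows one to route this quantity through the $L^2L^\infty$-endpoint, and this is precisely why that endpoint was such a central object of the preceding Strichartz analysis.
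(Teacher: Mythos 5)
Your proposal is correct and follows essentially the same route as the paper: the explicit cancellation formula you derive is exactly the paper's computation that $\mb P\mb K_{\mb u}(\Phi)(\tau)=-\int_\tau^\infty e^{\tau-\sigma}\mb P\mb N(\Phi(\sigma))\,\d\sigma$, and the remaining estimates (energy bound and Strichartz for the stable part, $L^2L^\infty$ for the quadratic term, the $L^\infty L^6$--$L^2L^\infty$ interpolation giving $L^5L^{10}$ for the quintic term, and Young's inequality for the unstable tail) coincide with the paper's. Your closing observation about the non-admissibility of $(p,q)=(2,10)$ and the role of the $L^2L^\infty$ endpoint matches the paper's own motivation.
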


\begin{proof}
By Lemmas \ref{lem:energy} and \ref{lem:N} we have
\begin{align*}
\|(\mb I-\mb P)\mb K_\mb u(\Phi)(\tau)\|_{\mc H}&\lesssim \|\mb u\|_{\mc H}+\int_0^\tau \|\mb N(\Phi(\sigma))\|_{\mc H}\d\sigma \\
&\lesssim \tfrac{\delta}{c}+\int_0^\tau \left (
\|\phi_1(\sigma,\cdot)\|_{L^{10}(\B^3)}^2+\|\phi_1(\sigma,\cdot)\|_{L^{10}(\B^3)}^5 \right ) \d\sigma \\
&\lesssim \tfrac{\delta}{c}+\|\phi_1\|_{L^2(\R_+)L^\infty(\B^3)}^2 
+\|\phi_1\|_{L^5(\R_+)L^{10}(\R_+)}^5\\
&\lesssim \tfrac{\delta}{c}+\delta^2
+\left (\|\phi_1\|_{L^\infty(\R_+)L^6(\B^3)}^\theta
\|\phi_1\|_{L^2(\R_+)L^\infty(\B^3)}^{1-\theta}\right )^5 \\
&\lesssim \tfrac{\delta}{c}+\delta^2
+\left (\|\Phi\|_{L^\infty(\R_+)\mc H}^\theta
\|\phi_1\|_{L^2(\R_+)L^\infty(\B^3)}^{1-\theta}\right )^5 \\
&\lesssim \tfrac{\delta}{c}+\delta^2
+\delta^5. 
\end{align*}
Furthermore, the Strichartz estimates from Theorem \ref{thm:strich} imply
\begin{align*}
\|[(\mb I-\mb P)\mb K_\mb u(\Phi)]_1\|_{L^2(\R_+)L^\infty(\B^3)}
&\lesssim \|\mb u\|_{\mc H}+\int_0^\infty \|\mb N(\Phi(\tau))\|_{\mc H}\d\tau \\
&\lesssim \tfrac{\delta}{c}+\int_0^\infty \|\phi_1(\tau,\cdot)\|_{L^{10}(\B^3)}^2 \d\tau \\
&\quad +\int_0^\infty \|\phi_1(\tau,\cdot)\|_{L^{10}(\B^3)}^5 \d\tau \\
&\lesssim \tfrac{\delta}{c}+\delta^2+\delta^5.
\end{align*}
Next, we consider $\mb P\mb K_\mb u(\Phi)(\tau)$ which is given by
\[ \mb P\mb K_\mb u(\Phi)(\tau)=-\int_\tau^\infty e^{\tau-\sigma}\mb P\mb N(\Phi(\sigma))\d\sigma. \]
Recall that $\rg \mb P=\langle \mb g\rangle$ and thus, by Riesz' representation theorem
there exists a $\mb g^*\in \mc H$ such that
\[ \mb P \mb f=(\mb f|\mb g^*)_{\mc H}\,\mb g \]
for all $\mb f\in \mc H$.
Consequently, we obtain
\[ \mb P\mb K_\mb u(\Phi)(\tau)=-\mb g\int_\tau^\infty e^{\tau-\sigma}\left (
\mb N(\Phi(\sigma))|\mb g^*\right )_{\mc H}\,\d\sigma. \]
This yields
\begin{align*} \|\mb P\mb K_{\mb u}(\Phi)(\tau)\|_{\mc H}
&\lesssim \int_\tau^\infty e^{\tau-\sigma}
\left |\left (
\mb N(\Phi(\sigma))|\mb g^*\right )_{\mc H}\right |\d\sigma \\
&\lesssim \int_{\tau}^\infty \|\mb N(\Phi(\sigma))\|_{\mc H}\,\d\sigma \\
&\lesssim \int_0^\infty \left (\|\phi_1(\sigma,\cdot)\|_{L^\infty(\B^3)}^2
+\|\phi_1(\sigma,\cdot)\|_{L^{10}(\B^3)}^5 \right )
\d\sigma \\
&\lesssim \delta^2+\delta^5.
\end{align*}
Finally, we obtain
\begin{align*}
\|[\mb P\mb K_{\mb u}(\Phi)(\tau)]_1\|_{L^\infty(\B^3)}
&\lesssim \int_\tau^\infty e^{\tau-\sigma}\|\mb N(\Phi(\sigma))\|_{\mc H}\,\d\sigma \\
&=\int_\R 1_{(-\infty,0]}(\tau-\sigma)e^{\tau-\sigma}1_{[0,\infty)}(\sigma)\|\mb N(\Phi(\sigma))\|_{\mc H}\,
\d\sigma
\end{align*}
and Young's inequality yields
\begin{align*}
\|[\mb P\mb K_\mb u(\Phi)]_1\|_{L^2(\R_+)L^\infty(\B^3)}&\lesssim \left \|1_{(-\infty,0]}
e^{|\cdot|}\right \|_{L^2(\R)} \int_0^\infty \|\mb N(\Phi(\sigma))\|_{\mc H}\,\d\sigma \\
&\lesssim \delta^2+\delta^5.
\end{align*}
In summary, we infer $\|\mb K_{\mb u}(\Phi)\|_{\mc X}\lesssim \frac{\delta}{c}+\delta^2
+\delta^5$, which implies the claim.
\end{proof}

\begin{lemma}
\label{lem:Kcontr}
Let $\delta>0$ be sufficiently small and $\mb u\in \mc H$.
Then we have the estimate
\[ \|\mb K_\mb u(\Phi)-\mb K_\mb u(\Psi)\|_{\mc X}\leq \tfrac12 \|\Phi-\Psi\|_{\mc X} \]
for all $\Phi,\Psi \in \mc X_\delta$.
\end{lemma}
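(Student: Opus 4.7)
The plan is to exploit the fact that the linear term $\mb S(\tau)\mb u$ cancels in the difference $\mb K_\mb u(\Phi)-\mb K_\mb u(\Psi)$, so only the nonlinear contributions survive. Explicitly, writing out the definitions one gets
\[ \mb K_\mb u(\Phi)(\tau)-\mb K_\mb u(\Psi)(\tau)=\int_0^\tau \mb S(\tau-\sigma)[\mb N(\Phi(\sigma))-\mb N(\Psi(\sigma))]\d\sigma-e^\tau \mb P\!\!\int_0^\infty e^{-\sigma}[\mb N(\Phi(\sigma))-\mb N(\Psi(\sigma))]\d\sigma. \]
Setting $\mb D(\sigma):=\mb N(\Phi(\sigma))-\mb N(\Psi(\sigma))$, the strategy is to repeat the four estimates from the proof of Lemma \ref{lem:Kball}---one for $\|(\mb I-\mb P)\mb K_\mb u(\cdot)\|_{L^\infty\mc H}$, one for the Strichartz norm of $[(\mb I-\mb P)\mb K_\mb u(\cdot)]_1$, and the corresponding two for the $\mb P$-part---with $\mb N(\Phi(\sigma))$ replaced by $\mb D(\sigma)$, then bound $\|\mb D(\sigma)\|_{\mc H}$ using the Lipschitz estimate of Lemma \ref{lem:N}.

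For the un-projected piece I would apply Lemma \ref{lem:energy} and the inhomogeneous Strichartz estimate of Theorem \ref{thm:strich} to get
\[ \|(\mb I-\mb P)[\mb K_\mb u(\Phi)-\mb K_\mb u(\Psi)]\|_{L^\infty(\R_+)\mc H}+\|[(\mb I-\mb P)\{\mb K_\mb u(\Phi)-\mb K_\mb u(\Psi)\}]_1\|_{L^2(\R_+)L^\infty(\B^3)}\lesssim \int_0^\infty \|\mb D(\sigma)\|_{\mc H}\d\sigma. \]
For the $\mb P$-part I would use the Riesz representation $\mb P\mb f=(\mb f|\mb g^*)_{\mc H}\mb g$ as in Lemma \ref{lem:Kball} to write
\[ \mb P[\mb K_\mb u(\Phi)-\mb K_\mb u(\Psi)](\tau)=-\mb g\int_\tau^\infty e^{\tau-\sigma}(\mb D(\sigma)|\mb g^*)_{\mc H}\d\sigma, \]
and Young's inequality together with $\|1_{(-\infty,0]}e^{|\cdot|}\|_{L^2(\R)}<\infty$ again reduces everything to controlling $\int_0^\infty \|\mb D(\sigma)\|_{\mc H}\d\sigma$.

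The remaining step is to show this integral is bounded by $\tfrac12\|\Phi-\Psi\|_{\mc X}$ when $\delta$ is small. Lemma \ref{lem:N} gives
\[ \|\mb D(\sigma)\|_{\mc H}\lesssim \bigl(\|\phi_1(\sigma,\cdot)\|_{L^{10}}+\|\psi_1(\sigma,\cdot)\|_{L^{10}}+\|\phi_1(\sigma,\cdot)\|_{L^{10}}^4+\|\psi_1(\sigma,\cdot)\|_{L^{10}}^4\bigr)\|\phi_1(\sigma,\cdot)-\psi_1(\sigma,\cdot)\|_{L^{10}}. \]
Interpolating $L^{10}(\B^3)$ between $L^6$ and $L^\infty$ as in Lemma \ref{lem:Kball} (with the same exponent $\theta$ making $L^{10}_\sigma$ admissible via $\|\phi_1\|_{L^{10}_\sigma L^{10}}\lesssim \|\phi_1\|_{L^\infty_\sigma L^6}^{\theta}\|\phi_1\|_{L^2_\sigma L^\infty}^{1-\theta}\lesssim \|\Phi\|_{\mc X}$ and similarly for the fifth power), together with Hölder in $\sigma$, one obtains
\[ \int_0^\infty \|\mb D(\sigma)\|_{\mc H}\d\sigma\lesssim \bigl(\|\Phi\|_{\mc X}+\|\Psi\|_{\mc X}+\|\Phi\|_{\mc X}^4+\|\Psi\|_{\mc X}^4\bigr)\|\Phi-\Psi\|_{\mc X}\lesssim (\delta+\delta^4)\|\Phi-\Psi\|_{\mc X}. \]
Choosing $\delta$ sufficiently small absorbs the implicit constant and yields the factor $\tfrac12$.

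The only mildly delicate point is the interpolation bookkeeping for the quintic term, where one needs to verify that the exponents land exactly so that both $L^{10}_\sigma L^{10}_x$ and the Hölder pairing with the quartic prefactor produce a finite integral controlled by the $\mc X$-norm; this is the same calculation that already appears in Lemma \ref{lem:Kball}, so no new ideas are required. Everything else is a direct repetition of the estimates from that lemma.
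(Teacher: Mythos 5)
Your proposal is correct and follows essentially the same route as the paper: the linear term cancels in the difference, the $(\mb I-\mb P)$ part is handled by Lemma \ref{lem:energy} and the (inhomogeneous) Strichartz estimates of Theorem \ref{thm:strich}, the $\mb P$ part by the rank-one representation $\mb P\mb f=(\mb f|\mb g^*)_{\mc H}\mb g$ and Young's inequality, and everything reduces to the Lipschitz bound of Lemma \ref{lem:N} together with the $L^6$--$L^\infty$ interpolation already used in Lemma \ref{lem:Kball}, giving a contraction factor $\lesssim\delta$ for $\delta$ small. This matches the paper's argument step for step.
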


\begin{proof}
By Lemmas \ref{lem:energy} and \ref{lem:N} we find
\begin{align*}
&\left \|(\mb I-\mb P)[\mb K_\mb u(\Phi)(\tau)-\mb K_\mb u(\Psi)(\tau)]\right \|_{\mc H}  \\
 &\lesssim \int_0^\tau \|\mb N(\Phi(\sigma))-\mb N(\Psi(\sigma))\|_{\mc H}\d\sigma \\
 &\lesssim \int_0^\tau \left (\|\phi_1(\sigma)\|_{L^{10}(\B^3)}
 +\|\phi_1(\sigma)\|_{L^{10}(\B^3)}^4+\|\psi_1(\sigma)\|_{L^{10}(\B^3)}
 +\|\psi_1(\sigma)\|_{L^{10}(\B^3)}^4\right ) \\
 &\quad \times \|\phi_1(\sigma)-\psi_1(\sigma)\|_{L^{10}(\B^3)}
 \d\sigma \\
 &\lesssim \|\phi_1\|_{L^2(\R_+)L^\infty(\B^3)}\|\phi_1-\psi_1\|_{L^2(\R_+)L^\infty(\B^3)}
 +\|\phi_1\|_{L^5(\R_+)L^{10}(\B^3)}^4 \|\phi_1-\psi_1\|_{L^5(\R_+)L^{10}(\B^3)} \\\
 &\quad +\|\psi_1\|_{L^2(\R_+)L^\infty(\B^3)}\|\phi_1-\psi_1\|_{L^2(\R_+)L^\infty(\B^3)}
 +\|\psi_1\|_{L^5(\R_+)L^{10}(\B^3)}^4 \|\phi_1-\psi_1\|_{L^5(\R_+)L^{10}(\B^3)} \\
 &\lesssim \delta \|\Phi-\Psi\|_{\mc X}.
 \end{align*}
 Furthermore, Theorem \ref{thm:strich} and Lemma \ref{lem:energy} yield
 \begin{align*}
 &\left \|[(\mb I-\mb P)\mb K_\mb u(\Phi)-(\mb I-\mb P)\mb K_\mb u(\Psi)]_1
 \right \|_{L^2(\R_+)L^\infty(\B^3)} \\
 &\lesssim \int_0^\infty \|\mb N(\Phi(\tau))-\mb N(\Psi(\tau))\|_{\mc H}\d \tau \\
 &\lesssim \delta \|\Phi-\Psi\|_{\mc X}
 \end{align*}
 by the same logic as above.
 
 On the unstable subspace we use $\mb P\mb f=(\mb f|\mb g^*)_\mc H\mb g$ as in the proof
 of Lemma \ref{lem:Kball} to obtain
 \begin{align*}
 \|\mb P\mb K_\mb u(\Phi)(\tau)-\mb P \mb K_\mb u(\Psi)(\tau)\|_{\mc H}
 &\lesssim \int_\tau^\infty e^{\tau-\sigma} \left | \left (\mb N(\Phi(\sigma))-\mb N(\Psi(\sigma))
 |\mb g^*
 \right )_{\mc H}\right |\d\sigma \\
 &\lesssim \int_\tau^\infty \|\mb N(\Phi(\sigma))-\mb N(\Psi(\sigma))\|_{\mc H}\d\sigma \\
  &\lesssim \delta \|\Phi-\Psi\|_{\mc X}.
 \end{align*}
 Finally, 
 \begin{align*}
 \|[\mb P\mb K_\mb u(\Phi)(\tau)-\mb P\mb K_\mb u(\Psi)(\tau)]_1\|_{L^\infty(\B^3)}
 &\lesssim \int_\tau^\infty e^{\tau-\sigma}\|\mb N(\Phi(\sigma))-\mb N(\Psi(\sigma))\|_{\mc H}
 \d\sigma
 \end{align*}
 and as in the proof of Lemma \ref{lem:Kball}, we infer from Young's inequality the estimate
 \[ \|[\mb P\mb K_\mb u(\Phi)-\mb P\mb K_\mb u(\Psi)]_1\|_{L^2(\R_+)L^\infty(\B^3)}
 \lesssim \delta \|\Phi-\Psi\|_{\mc X}. \]
\end{proof}

\begin{corollary}
\label{cor:modEq}
There exist $c,\delta>0$ such that, for every $\mb u$ with $\|\mb u\|_{\mc H}\leq \frac{\delta}{c}$,
there exists a unique $\Phi\in \mc X_\delta$ satisfying $\Phi=\mb K_{\mb u}(\Phi)$.
\end{corollary}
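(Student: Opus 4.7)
The statement is precisely what the two preceding lemmas (Lemma \ref{lem:Kball} and Lemma \ref{lem:Kcontr}) were designed to deliver, so the natural plan is to apply the Banach fixed point theorem on the complete metric space $\mc X_\delta$ (with the metric inherited from $\mc X$).

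First, I would fix $\delta>0$ small enough and $c>0$ large enough so that the conclusions of both Lemma \ref{lem:Kball} and Lemma \ref{lem:Kcontr} hold simultaneously. Lemma \ref{lem:Kball} then guarantees that for any $\mb u\in \mc H$ with $\|\mb u\|_{\mc H}\leq \frac{\delta}{c}$, the map $\mb K_{\mb u}$ sends $\mc X_\delta$ into $\mc X_\delta$, and Lemma \ref{lem:Kcontr} says that $\mb K_{\mb u}$ is a strict contraction on $\mc X_\delta$ with Lipschitz constant at most $\tfrac12$. Since $\mc X_\delta$ is a closed subset of the Banach space $\mc X$, it is complete as a metric space, so the contraction mapping principle produces a unique $\Phi\in \mc X_\delta$ with $\Phi=\mb K_{\mb u}(\Phi)$.

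The only mild subtlety is to observe that $\mc X_\delta$ is indeed a closed subset of $\mc X$: if $\Phi_n\to\Phi$ in $\mc X$ with $\|\Phi_n\|_{\mc X}\leq \delta$, then by continuity of the norm, $\|\Phi\|_{\mc X}\leq \delta$, so $\Phi\in\mc X_\delta$. There is no real obstacle here; the entire difficulty of the paper has been absorbed into the Strichartz and energy bounds that underlie Lemmas \ref{lem:Kball} and \ref{lem:Kcontr}. The corollary itself is a one-line application of the contraction principle.
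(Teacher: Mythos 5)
Your proposal is correct and coincides with the paper's own argument, which likewise deduces the corollary from Lemma \ref{lem:Kball}, Lemma \ref{lem:Kcontr}, and the Banach fixed point theorem. The additional observation that $\mc X_\delta$ is closed in $\mc X$ (hence complete) is a fine, harmless elaboration.
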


\begin{proof}
This is a consequence of Lemmas \ref{lem:Kball}, \ref{lem:Kcontr}, and the Banach
fixed point theorem.
\end{proof}

\subsection{Variation of blowup time}
In the final step we show that choosing the correct blowup time makes the correction
term $\mb C(\Phi,\mb u)$ go away. 
As a consequence, we obtain a solution to the original equation.

Recall that by Eqs.~\eqref{eq:utopsi}, \eqref{eq:psitopsi12}, and \eqref{eq:ansatzphi}, the
initial data $\Phi(0)=(\phi_1(0,\cdot),\phi_2(0,\cdot))$ we prescribe are of the form
\begin{align*} 
\phi_1(0,\rho)&=\psi_1(0,\rho)-c_3=T^\frac12 u(0,T\rho)-c_3 =T^\frac12 f(T\rho)-c_3 \\
\phi_2(0,\rho)&=\psi_2(0,\rho)-\tfrac12c_3=T^{\frac32}\partial_0 u(0,T\rho)-\tfrac12 c_3
=T^{\frac32}g(T\rho)-\tfrac12 c_3.
\end{align*}
In the formulation of Theorem \ref{thm:main} we measure the size of the initial data
relative to the ODE blowup solution $u^1$.
According to Eq.~\eqref{eq:utopsi}, the latter corresponds to
\[ \psi^1(\tau,\rho):=T^\frac12 e^{-\frac12\tau} 
u^1(T-Te^{-\tau},Te^{-\tau}\rho)=c_3 T^\frac12 e^{-\frac12\tau}(1-T+Te^{-\tau})^{-\frac12}. \]
In view of Eq.~\eqref{eq:psitopsi12}, we set $\psi_1^1:=\psi^1$ and
\[ \psi^1_2(\tau,\rho):=[\partial_\tau+\rho\partial_\rho+\tfrac12]\psi^1(\tau,\rho)
= \tfrac12 c_3 T^\frac32 e^{-\frac32\tau}(1-T+Te^{-\tau})^{-\frac32}. \]
Consequently, this blowup solution has initial data
\[ \psi_1^1(0,\rho)=c_3 T^\frac12,\qquad \psi_2^1(0,\rho)=\tfrac12 c_3 T^\frac32. \]
We therefore rewrite our initial data as
\begin{align*} \Phi(0)(\rho)&=(T^\frac12 f(T\rho),T^\frac32g(T\rho))-(c_3 T^\frac12,\tfrac12 c_3 T^\frac32)
+(c_3 T^\frac12,\tfrac12 c_3 T^\frac32)-(c_3,\tfrac12 c_3)  \\
&=\mb U(T,(f-c_3,g-\tfrac12 c_3))(\rho)
\end{align*}
where 
\[ \mb U(T,\mb v)(\rho):=(T^\frac12 v_1(T\rho), T^\frac32 v_2(T\rho))
+(c_3 T^\frac12,\tfrac12 c_3 T^\frac32)-(c_3,\tfrac12 c_3).  \]
It is not hard to see that, for $\delta >0$ small enough, the map 
\[ \mb U: [1-\delta,1+\delta]\times H^1(\B^3_{1+\delta})\times L^2(\B^3_{1+\delta})
\to H^1(\B^3)\times L^2(\B^3) \] is continuous and $\mb U(1,\mb 0)=\mb 0$.
Furthermore, one easily checks that 
\[ \|\mb U(T,\mb v)\|_{H^1\times L^2(\B^3)}\lesssim \|\mb v\|_{H^1\times L^2(\B^3_{1+\delta})}+|T-1| \]
for all $T\in [1-\delta,1+\delta]$.

\begin{lemma}
\label{lem:blowupT}
There exist $M\geq 1$ and $\delta>0$ such that the following holds.
For any $\mb v \in H^1\times L^2(\B^3_{1+\delta})$ satisfying
$\|\mb v\|_{H^1\times L^2(\B^3_{1+\delta})}\leq \tfrac{\delta}{M}$, there exists
a $T^*\in [1-\delta,1+\delta]$ and a function $\Phi\in \mc X_{\delta}$ 
which satisfies $\Phi=\mb K_{\mb U(T^*,\mb v)}(\Phi)$ and 
$\mb C(\Phi,\mb U(T^*,\mb v))=\mb 0$.
\end{lemma}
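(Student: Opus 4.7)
The plan is a shooting argument in the blowup time $T$, made possible by the fact that the unstable subspace $\rg\mb P=\langle \mb g\rangle$ is one-dimensional. For every $T\in[1-\delta,1+\delta]$ the bound
\[ \|\mb U(T,\mb v)\|_{\mc H}\lesssim \|\mb v\|_{H^1\times L^2(\B^3_{1+\delta})}+|T-1|\lesssim \tfrac{\delta}{M}+\delta \]
combined with Corollary \ref{cor:modEq} (for $M$ chosen sufficiently large relative to its constant $c$) yields a unique fixed point $\Phi_T\in\mc X_\delta$ of $\mb K_{\mb U(T,\mb v)}$. Since $\mb C(\Phi_T,\mb U(T,\mb v))\in\rg\mb P=\langle\mb g\rangle$, I can write $\mb C(\Phi_T,\mb U(T,\mb v))=\alpha(T)\,\mb g$, and the task reduces to locating a zero of the scalar function $\alpha$.

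The second step is to verify that $T\mapsto \Phi_T$ is continuous from $[1-\delta,1+\delta]$ into $\mc X$, which entails continuity of $\alpha$. Subtracting the fixed-point equations for $\Phi_T$ and $\Phi_{T'}$ and using the contraction estimate from Lemma \ref{lem:Kcontr} together with the obvious continuity $\|\mb U(T,\mb v)-\mb U(T',\mb v)\|_{\mc H}\to 0$ as $T'\to T$, one obtains a bound of the form $\|\Phi_T-\Phi_{T'}\|_{\mc X}\lesssim \|\mb U(T,\mb v)-\mb U(T',\mb v)\|_{\mc H}$, which suffices.

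To extract a sign change of $\alpha$ on $[1-\delta,1+\delta]$, I would decompose
\[ \mb C(\Phi_T,\mb U(T,\mb v))=\mb P\mb U(T,\mb 0)+\mb P\bigl[\mb U(T,\mb v)-\mb U(T,\mb 0)\bigr]+\int_0^\infty e^{-\sigma}\mb P\mb N(\Phi_T(\sigma))\,\d\sigma. \]
The middle term is $O(\delta/M)$ in $\mc H$ from the explicit form of $\mb U$, and the integral term is $O(\delta^2)$ by the same nonlinear bounds used in Lemma \ref{lem:Kball}. For the principal term, direct computation gives $\mb U(T,\mb 0)=\bigl(c_3(T^{1/2}-1),\tfrac12 c_3(T^{3/2}-1)\bigr)$ and
\[ \partial_T\mb U(T,\mb 0)\big|_{T=1}=\Bigl(\tfrac12 c_3,\tfrac34 c_3\Bigr)=\tfrac{c_3}{4}(2,3)=\tfrac{c_3}{4}\,\mb g. \]
This derivative is exactly a positive multiple of $\mb g$, a reflection of the time-translation invariance of the underlying PDE. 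Since $\mb P\mb g=\mb g$, Taylor expansion gives $\mb P\mb U(T,\mb 0)=\tfrac{c_3}{4}(T-1)\mb g+O((T-1)^2)$, and collecting all contributions,
\[ \alpha(T)=\tfrac{c_3}{4}(T-1)+O(\delta/M)+O(\delta^2). \]
For $M$ large and $\delta$ small, $\alpha(1-\delta)<0<\alpha(1+\delta)$, so the intermediate value theorem yields $T^*\in[1-\delta,1+\delta]$ with $\alpha(T^*)=0$, that is, $\mb C(\Phi_{T^*},\mb U(T^*,\mb v))=\mb 0$. Setting $\Phi:=\Phi_{T^*}$ completes the construction.

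The main technical difficulty I anticipate is the continuity of $T\mapsto\Phi_T$ in the full Strichartz-type norm of $\mc X$: the $L^2(\R_+)L^\infty(\B^3)$ piece of the difference $\Phi_T-\Phi_{T'}$ must be controlled by rerunning Theorem \ref{thm:strich} on the linearised difference equation, which is morally routine but requires care in handling both data-perturbation and nonlinear-perturbation contributions. The potentially more serious analytic obstruction—that $\partial_T\mb U(T,\mb 0)|_{T=1}$ has a nontrivial projection onto $\langle\mb g\rangle$—happens to hold by pointwise equality, a convenient consequence of the choice of $\mb g$ coming from time-translation symmetry.
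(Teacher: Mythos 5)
Your proposal is correct and follows essentially the same route as the paper: fixed point $\Phi_T$ from Corollary \ref{cor:modEq} for each $T$, expansion of $\mb U(T,\mb v)$ around $T=1$ using $\partial_T\mb U(T,\mb 0)|_{T=1}=\tfrac14 c_3\mb g$, the $O(\delta/M)+O(\delta^2)$ estimates on the data and nonlinear contributions, and a one-dimensional topological argument to locate $T^*$ (the paper phrases it as a fixed point of the continuous self-map $1+F$ of $[1-\delta,1+\delta]$, which is equivalent to your intermediate value theorem argument). Your explicit verification of the continuity of $T\mapsto\Phi_T$ via the uniform contraction constant and the linear dependence of $\mb K_{\mb u}$ on $\mb u$ is a point the paper only asserts implicitly, and it is handled correctly.
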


\begin{proof}
Note that
\[ \partial_T (c_3T^\frac12,\tfrac12 c_3 T^\frac32)|_{T=1}=(\tfrac12 c_3,\tfrac34 c_3)
=\tfrac14 c_3 \mb g \]
with $\mb g$ from Proposition \ref{prop:SG}.
Thus, we may write
\[ \mb U(T,\mb v)(\rho)=(T^\frac12 v_1(T\rho), T^\frac32 v_2(T\rho))+\tfrac14 c_3 (T-1)\mb g 
+(T-1)^2 \mb f_T \]
where $\|\mb f_T\|_{\mc H}\lesssim 1$ for all $T\in [\frac12,\frac32]$.
This yields
\[ (\mb U(T,\mb v)|\mb g)_{\mc H}=O(\tfrac{\delta}{M}T^0)+\tfrac14 c_3\|\mb g\|_{\mc H}^2(T-1)
+O(\delta^2 T^0) \]
for all $T\in [1-\delta,1+\delta]$, $\delta\in [0,\frac12]$ and $M\geq 1$.
Now let $\Phi_T\in \mc X_\delta$ be the fixed point of $\mb K_{\mb U(T,\mb v)}$
which, by Corollary \ref{cor:modEq}, exists for any $T\in [1-\delta,1+\delta]$, provided
$\delta>0$ is sufficiently small and $M$ is sufficiently large.
We have
\[ \mb C(\Phi_T,\mb U(T,\mb v))=\mb P\mb U(T,\mb v)+\mb P\int_0^\infty e^{-\sigma}
\mb N(\Phi_T(\sigma))\d\sigma \]
and by Lemma \ref{lem:N} we obtain
\[ \int_0^\infty e^{-\sigma}\|\mb N(\Phi_T(\sigma))\|_{\mc H}\d\sigma\lesssim \delta^2, \]
cf.~the proof of Lemma \ref{lem:Kball}.
Consequently,
\[ (\mb C(\Phi_T,\mb U(T,\mb v))|\mb g)_{\mc H}=\tfrac14 c_3 \|\mb g\|_{\mc H}^2(T-1)+
O(\tfrac{\delta}{M}T^0)+O(\delta^2T^0), \]
and the $O$-terms are continuous functions of $T$.
Since $\mb C(\Phi_T,\mb U(T,\mb v))\in \langle \mb g\rangle$, we see that
$\mb C(\Phi_T,\mb U(T,\mb v))=\mb 0$ is equivalent to $T-1=F(T)$, where $F$ is 
a continuous function on $[1-\delta,1+\delta]$ which satisfies $|F(T)|\lesssim \frac{\delta}{M}+\delta^2$. 
Thus, by choosing $M\geq 1$ large enough and then $\delta>0$ small enough, we see that $1+F$ is a continuous self-map of the
interval $[1-\delta,1+\delta]$ which necessarily has a fixed point $T^*$.
\end{proof}

\begin{remark}
It is not difficult to see that $T^*$ from Lemma \ref{lem:blowupT} is 
unique in $[1-\delta,1+\delta]$. This follows from the fact that $T^*$ can be obtained
as a fixed point of a contraction mapping.
\end{remark}

\subsection{Proof of Theorem \ref{thm:main}}
Let $M\geq 1$ be sufficiently large and choose $\delta>0$ sufficiently small.
For given $\mb v=(f,g)-u^1[0] \in H^1\times L^2(\B^3_{1+\delta})$ with
\[ \|(f,g)-u^1[0]\|_{H^1\times L^2(\B^3_{1+\delta})}\leq \tfrac{\delta}{M}, \]
let $\Phi\in \mc X_\delta$ be the associated solution from Lemma \ref{lem:blowupT}.
Furthermore, let $T=T^*$ be the corresponding blowup time from Lemma \ref{lem:blowupT}.
Then, by Eqs.~\eqref{eq:ansatzphi}, \eqref{eq:psitopsi12}, and \eqref{eq:utopsi}, we have
\begin{align*}
\delta^2 &\geq \|\phi_1\|_{L^2(\R_+)L^\infty(\B^3)}^2
=\int_0^\infty \|\phi_1(\tau)\|_{L^\infty(\B^3)}^2\d\tau
=\int_0^\infty \|\psi(\tau,\cdot)-c_3\|_{L^\infty(\B^3)}^2\d\tau \\
&=\int_0^T \|\psi(-\log(T-t)+\log T,\cdot)-c_3\|_{L^\infty(\B^3)}^2 \frac{\d t}{T-t} \\
&=\int_0^T \|\psi(-\log(T-t)+\log T,\tfrac{\cdot}{T-t})-c_3\|_{L^\infty(\B^3_{T-t})}^2
\frac{\d t}{T-t} \\
&=\int_0^T (T-t)\|(T-t)^{-\frac12}
\psi(-\log(T-t)+\log T,\tfrac{\cdot}{T-t})-c_3(T-t)^{-\frac12}\|_{L^\infty(\B^3_{T-t})}^2
\frac{\d t}{T-t} \\
&\simeq \int_0^T \frac{\|u(t,\cdot)-u^T(t,\cdot)\|_{L^\infty(\B^3_{T-t})}^2}
{\|u^T(t,\cdot)\|_{L^\infty(\B^3_{T-t})}^2}
\frac{\d t}{T-t}.
\end{align*}

\bibliography{strich}
\bibliographystyle{plain}

\end{document}